\newtheorem{thm}{Theorem}[section]
\newtheorem{cor}{Corollary}[section]
\let\c@cor\c@thm\makeatother
\newtheorem{prop}{Proposition}[section]
\let\c@prop\c@thm\makeatother
\newtheorem*{prop*}{Proposition}
\newtheorem{lem}{Lemma}[section]
\let\c@lem\c@thm\makeatother
\theoremstyle{definition}
\newtheorem{defn}{Definition}[section]
\let\c@defn\c@thm\makeatother
\newtheorem{eg}{Example}[section]
\let\c@eg\c@thm\makeatother
\let\c@egs\c@thm\makeatother
\let\c@ques\c@thm\makeatother
\theoremstyle{remark}
\newtheorem{rmk}{Remark}[section]
\let\c@rmk\c@thm\makeatother
\let\c@equation\c@thm
\numberwithin{equation}{section}
\newcommand{\op}{\mathscr{O}}
\newcommand{\mop}{\mathbb{O}}
\newcommand{\opt}{\mathscr{P}}
\newcommand{\mopt}{\mathbb{P}}
\newcommand{\amop}{\mathbb{T}}
\newcommand{\amopt}{\mathbb{T}'}
\newcommand{\cA}{\mathcal{A}}
\newcommand{\cG}{\mathcal{G}}
\newcommand{\cM}{\mathcal{M}}
\newcommand{\cN}{\mathcal{N}}
\newcommand{\cS}{\mathcal{S}}
\newcommand{\cT}{\mathcal{T}}
\newcommand{\cC}{\mathcal{C}}
\newcommand{\cW}{\mathcal{W}}
\newcommand{\cD}{\mathcal{D}}
\newcommand{\mA}{\mathbb{A}}
\newcommand{\mC}{\mathbb{C}}
\newcommand{\bvM}{\mathbf{M}}
\newcommand{\fa}[2]{{#1}({#2})}%free algebras over a monad 
\newcommand{\malg}{algebra}
\newcommand{\malgs}{algebras}
\newcommand{\oalg}{algebra}
\newcommand{\oalgs}{algebras}
\newcommand{\cube}{\mathscr{C}}
\let\catsymbfont\mathfrak 
\let\xto\xrightarrow
\newcommand{\aT}{{\catsymbfont{T}}}
\newcommand{\aU}{{\catsymbfont{U}}}
\newcommand{\aC}{{\catsymbfont{C}}}
\newcommand{\aD}{{\catsymbfont{D}}}
\DeclareMathOperator{\As}{Ass}
\DeclareMathOperator{\Com}{Com}
\DeclareMathOperator{\Diff}{Diff}
\DeclareMathOperator{\Id}{\mathrm{Id}}
\DeclareMathOperator{\id}{\mathrm{Id}}
\DeclareMathOperator{\eval}{eval}
\DeclareMathOperator{\sh}{sh}
\DeclareMathOperator{\inc}{incl}
\DeclareMathOperator*{\hocolim}{hocolim}
\DeclareMathOperator{\mAs}{\mathbb{A}ss}
\DeclareMathOperator{\proj}{proj}
\newcommand{\g}{\mathcal{G}}
\DeclareMathOperator{\hAut}{hAut}
\begin{document}
\title[Homological stability and infinite loop spaces]{Infinite loop spaces from operads with homological stability}
\author[M. Basterra]{Maria Basterra}
\address{Department of Mathematics, University of New Hampshire,  Durham, NH 03824, USA}
\email{basterra@unh.edu}
\author[I. Bobkova]{Irina Bobkova}
\address{Department of Mathematics, University of Rochester, Rochester, NY 14620, USA}
\email{ibobkova@ur.rochester.edu}
\author[K. Ponto]{Kate Ponto}
\address{Department of Mathematics, University of Kentucky, Lexington, KY 40506, USA}
\email{kate.ponto@uky.edu}
\author[U. Tillmann]{Ulrike Tillmann}
\address{Department of Mathematics, University of Oxford, Oxford OX2 6GG, UK}
\email{tillmann@maths.ox.ac.uk}
\author[S. Yeakel]{Sarah Yeakel}
\address{Department of Mathematics, University of Maryland, College Park, MD 20742, USA}
\email{syeakel@math.umd.edu}

\begin{abstract}
  Motivated by the operad built from moduli spaces of Riemann surfaces,
  we consider a general class of operads in the category of spaces that satisfy certain homological stability conditions.  
  We prove that such operads are infinite loop space operads in the sense that the group completions of their algebras are infinite loop spaces. 
  
  The recent, strong homological stability results of Galatius and Randal-Williams for moduli spaces of  even  dimensional manifolds can be used to construct examples of operads with homological stability.  As a consequence diffeomorphism groups and  mapping class groups are shown to give rise to infinite loop spaces. Furthermore,
  the map to $K$-theory defined by the action of the diffeomorphisms on the middle dimensional homology is shown to be a map of infinite loop spaces.  
\end{abstract}

\date{\today}

\maketitle
\section{Introduction}

The theory of operads has its origin in the systematic study of loop spaces  \cite{BarrattEccles, boardmanvogt,May}.
Operads give a tool for the construction  and detection of loop spaces and, crucially,  facilitate  the study of maps between them. 
An important example is the little $n$-cube operad $\cube_n$ which detects 
$n$-fold loop spaces in the sense that the group completion of a $\cube_n$-algebra is an $n$-fold loop space, and in particular $\cube_\infty$-algebras group complete to infinite loop spaces. 
These are examples of $E_n$- and $E_\infty$-operads.

Let $\op = \{\op (j)\} _{j\geq 0}$ be an operad where each $\op(j)$ is a topological space equipped with a compatible action of the symmetric group $\Sigma_j$. In our setting, the space $\op(0)$ has a base point $*$ but may be much larger.
Because the study of infinite loop spaces is equivalent to the study of connective spectra (in a sense that can be made precise) $E_\infty$-operads are of particular interest. 
An operad $\op$ is an $E_\infty$-operad if each $\op (j)$ is contractible and has a free $\Sigma_j$-action.
In contrast, an operad $\op$ is an $E_1$-operad (also called $A_\infty$-operad) if each $\op(j)$ is homotopy equivalent to a free orbit of $\Sigma _j$. 

In general there are obstructions to extending a given $n$-fold loop space structure to an $n+1$-fold structure. Motivated by examples of operads that arise in the study of conformal and topological field theories, we prove here a very general theorem showing that the obstructions vanish in the presence of homological stability.

\begin{defn}
An operad $\op$ is an {\bf operad with homological stability (OHS)} if the following conditions are 
satisfied:
\begin{enumerate}
\item\label{ohs:condition1} the operad is graded, in the sense that each $\op(j)$ is the disjoint union of connected spaces 
$\op_g(j)$ for $g\in \mathbb N$ and the grading is compatible with the operad structure;
\item\label{ohs:condition2} the maps $D\colon \op_g(j) \to \op_g(0)$ induced by the operation of  $*\in \op(0)$ are homology isomorphisms in degrees $q \leq \phi(g)$ where $\phi (g) $ tends to infinity with $g$;
\item \label{ohs:condition3} there is an $A_\infty$-operad $\cA$ concentrated in degree zero, 
and a map of graded operads $\mu\colon \cA \to \op$ such that the image of $\cA(2)$ in $\op(2)$ is path-connected. 
\end{enumerate}
\end{defn}

We refer to  \S\ref{OHS} for a more general definition. 
The $\mathbb N$-grading of the operad is for convenience and is evident in our main examples, but we can also consider operads graded by other abelian monoids.  
The third condition ensures that any $\op$-algebra is a monoid up to homotopy with a group completion 
% $\cG$, 
and the condition on $\mu(\cA(2))$ is a very mild commutativity condition that allows an application of the group completion theorem; see \S\ref{group_complete}. The second condition is the crucial one  which enables us to prove the following very general theorem.
Importantly, this condition can be weakened, see \S \ref{OHS}. We need only ask that the map $D$ induces a homology isomorphism on the homotopy colimits as $g$ goes to infinity.

\begin{thm}
Every operad $\op$ with homological stability  (OHS) is an infinite loop space operad: 
Group completion defines a functor  $\cG$ from the category of $\op$-algebras to the category of infinite loop spaces.
\end{thm}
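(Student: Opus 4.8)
The plan is to reduce the theorem to an infinite loop space recognition principle and to put the real work into producing, functorially from $\op$ and an $\op$-algebra $X$, an infinite tower of deloopings of $\cG(X)$. The third condition supplies the preliminary structure almost for free: through $\mu\colon\cA\to\op$ every $\op$-algebra is an $\cA$-algebra, hence an $A_\infty$-monoid, so the bar construction $BX$ and $\cG(X)=\Omega BX$ are defined and functorial in $X$; and since the image of $\cA(2)$ in $\op(2)$ is path-connected, the multiplication on $X$ is homotopy-commutative to the extent needed for the group completion theorem (\S\ref{group_complete}), which identifies $H_*(\cG(X))$ with the localisation of $H_*(X)$ at the submonoid $\pi_0(X)$. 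It then remains to deloop $\cG(X)$ indefinitely and coherently.

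The operad $\op$ is itself very far from being an $E_\infty$-operad --- its spaces $\op(j)=\coprod_g\op_g(j)$ are highly non-contractible --- so the infinite loop structure cannot be extracted from $\op$ directly, and the function of the second condition is to repair this after group completion. I would first pass to the stabilised situation: using the grading-raising maps built into the operad one forms the homotopy colimits $\op_\infty(j)=\hocolim_g\op_g(j)$, and likewise telescopes $X$ along the distinguished self-map coming from the grading, obtaining a stabilised $A_\infty$-monoid $\widetilde X$ together with a comparison map $X\to\widetilde X$. The second condition --- indeed only its stated weakening, that $D$ induce a homology isomorphism after passing to homotopy colimits in $g$, which is exactly what the telescope records --- guarantees that $X\to\widetilde X$ becomes a homology isomorphism after inverting $\pi_0$, so by the group completion theorem it induces an equivalence $\cG(X)\xrightarrow{\ \simeq\ }\cG(\widetilde X)$. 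On the stabilised side the structure maps of $\op$, the symmetric group actions they carry, and the now genuine homology equivalences $\op_\infty(j)\to\op_\infty(0)$ assemble, via a two-sided bar (equivalently, nerve) construction, into a special $\Gamma$-space (or symmetric monoidal machine) with underlying monoid $\widetilde X$; it is special because the homology equivalences of the second condition trivialise the symmetric group actions on the homology of the stabilised operad spaces, which is precisely the homotopy-commutativity a $\Gamma$-space needs, so the machinery of Segal and May produces a connective spectrum whose zeroth space is $\cG(\widetilde X)\simeq\cG(X)$.

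The principal obstacle is coherence, in two related guises. First, the telescope is an operad, and $\widetilde X$ an algebra over it, only once one knows that the stabilisations applied to different inputs of an operation can be interchanged through coherent higher homotopies; this is exactly the role of the mild commutativity in the third condition, and turning it into honest structure --- so that $\widetilde X$ is a genuine algebra over a genuine $\Gamma$-type object rather than a homotopy-coherent approximation --- calls for careful iterated bar constructions. Second, one must verify that the resulting $\Gamma$-space really is special after group completion: the homology equivalences of the second condition have to be shown to make all the Segal maps homology equivalences simultaneously and compatibly, so the argument is run with homology coefficients and then upgraded using group-likeness of $\cG(X)$. Granting these, every construction in the proof --- the telescope, the bar/nerve machine, and the recognition functor --- is manifestly natural in $X$, so $X\mapsto\cG(X)$ refines to the asserted functor from $\op$-algebras to infinite loop spaces.
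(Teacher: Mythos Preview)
Your proposal has a genuine gap at its central step: the construction of a special $\Gamma$-space from the ``stabilised'' operad and algebra. The objects you form do not carry the structure you need. First, the collection $\{\op_\infty(j)\}_j$ is not an operad: the structure maps $\op_g(k)\times\op_{g_1}(j_1)\times\cdots\to\op_{g+g_1+\cdots}(\cdot)$ add grades, so there is no induced composition on the telescopes, and consequently no sense in which your $\widetilde X$ is an algebra over a ``stabilised operad''. The coherence problems you flag in your final paragraph are therefore structural, not merely technical. Second, even granting some homotopy-coherent version, the spaces $\op_\infty(j)$ are not contractible; condition~(\ref{ohs:condition2}) says only that they have the homology of $\op_\infty(0)$, which in the examples is enormous (e.g.\ that of $\Omega^\infty\mathbf{MT}\theta$). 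Any nerve-type construction built from them would have Segal maps off by factors of $\op_\infty(0)$, so the resulting $\Gamma$-space would not be special. Your assertion that the homology equivalences ``trivialise the symmetric group actions \ldots\ which is precisely the homotopy-commutativity a $\Gamma$-space needs'' conflates two different things: trivial $\Sigma_j$-action on $H_*(\op_\infty(j))$ does follow, but specialness of a $\Gamma$-space requires the relevant spaces to be contractible, not merely to carry a homologically trivial symmetric group action.

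The paper proceeds quite differently and the key device is one you do not mention. One first replaces $\op$ by $\widetilde\op=\op\times\cube_\infty$, which is again an OHS and now comes with a genuine operad map $\pi\colon\widetilde\op\to\cube_\infty$; every $\op$-algebra is a $\widetilde\op$-algebra via the projection. The infinite loop space functor is then $X\mapsto|\cG\,B_\bullet(\mC_\infty,\mop,\mop(\ast)\times X)|$, visibly an $\Omega^\infty$-space because $\mC_\infty$ sits on the outside of the bar construction. Homological stability is used not to build an $E_\infty$-structure directly, but to prove a \emph{splitting theorem for free algebras}: $\cG(\mop Y)\simeq\cG(\op(0))\times\cG(\mC_\infty Y)$ for any based space $Y$. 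Applied levelwise to the bar resolution this yields a homotopy fibration
\[
\cG\,\op(0)\longrightarrow |\cG\,B_\bullet(\mop,\mop,X)|\longrightarrow |\cG\,B_\bullet(\mC_\infty,\mop,X)|,
\]
and a shearing trick with the diagonal $\op$-algebra $\op(0)\times X$ cancels the unwanted $\cG\,\op(0)$-factor to give $\cG X\simeq|\cG\,B_\bullet(\mC_\infty,\mop,\op(0)\times X)|$. The missing ingredient in your outline is precisely this auxiliary map to $\cube_\infty$: without it there is no place for an honest $E_\infty$-machine to enter, and homological stability alone does not manufacture one.
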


A special case of this theorem was proved in \cite {T} for a discrete model of the surface operad. 
The above theorem allows us to simplify the model   
and extend the theorem to operads associated to non-orientable surfaces and surfaces with other tangential structures. We discuss this in the Appendix.
However, our main new examples are operads of higher dimensional manifolds.
We now give a brief description; the details are given in \S\ref{highdim}.
Let $W_{g,j+1}$ be the  connected sum of $g$ copies of $S^k \times S^k$  
with $j+1$ open disks removed. We will think of it as a cobordism from $j$ spheres of dimension $2k-1$ to one such sphere. As each $W_{g,j+1}$ is $k-1$-connected, the map to $BO(2k)$ that classifies the tangent bundle factors through the $k$-connected cover $\theta\colon B \to BO(2k)$.
We construct a graded operad $\cW^{2k}$ with 
\[
\cW^{2k}_g (j) \simeq  \mathcal M^\theta_k (W_{g, j+1}, \ell_{W_{g, j+1}})
\]
the connected component of the moduli space of $W_{g,j+1}$ with $\theta$-structure containing a given $\theta$-structure $\ell_{W_{g, j+1}}$, as defined and studied by Galatius and Randal-Williams. 
Analogously to the surface operad, the structure maps of $\cW^{2k}$ are defined by gluing the outgoing boundary sphere of one cobordism to an incoming sphere of another.  
Using the recent, strong homological stability results of \cite {GRWII} we 
prove the following.

\begin{thm}
For $2k \geq 2$, the operad $\cW^{2k}$ is an operad with homological stability (OHS).
\end{thm}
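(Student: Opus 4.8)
The plan is to verify, for the operad $\cW^{2k}$ of \S\ref{highdim} (with $\cW^{2k}_g(j)\simeq\cM^\theta_k(W_{g,j+1},\ell_{W_{g,j+1}})$), the three conditions defining an OHS.

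\textbf{The formal conditions \ref{ohs:condition1} and \ref{ohs:condition3}.} Each $\cW^{2k}_g(j)$ is a path-component of a Galatius--Randal-Williams moduli space and hence connected, so $\cW^{2k}(j)=\bigsqcup_{g\ge 0}\cW^{2k}_g(j)$ is graded by genus. To see the grading is compatible with composition I would check that gluing the outgoing boundary sphere of $W_{h,i+1}$ to an incoming boundary sphere of $W_{g,j+1}$ gives a manifold diffeomorphic to $W_{g+h,\,i+j}$: the outgoing sphere is consumed while the incoming spheres add up, and cutting $\#_g(S^k\times S^k)$ and $\#_h(S^k\times S^k)$, each with the relevant disks removed, and regluing along an $S^{2k-1}$ is precisely $\#_{g+h}(S^k\times S^k)$ with those disks removed; the $\theta$-structures agree near the glued sphere and so glue, landing in the prescribed component by the coherent choice of the $\ell_{W_{g,j+1}}$ in \S\ref{highdim}. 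The basepoint $*\in\cW^{2k}(0)$ is the class of $D^{2k}$ with its standard $\theta$-structure, of genus $0$. For condition \ref{ohs:condition3} I would take $\cA=\cube_1$, the little $1$-cubes operad, regarded as concentrated in degree zero, and construct $\mu\colon\cube_1\to\cW^{2k}$ in the point-set model of \S\ref{highdim}: a point of $\cube_1(j)$, i.e.\ $j$ disjoint subintervals of $(0,1)$, specifies a standard embedded copy of the genus-zero cobordism $W_{0,j+1}$ with its standard $\theta$-structure and a labelling of its $j$ incoming spheres, chosen so that nesting of intervals corresponds on the nose to gluing of these standard cobordisms. This is a $\Sigma_j$-equivariant map of graded operads; being concentrated in degree zero its image lies in $\cW^{2k}_0$, and since $\cW^{2k}_0(2)\simeq\cM^\theta_k(W_{0,3},\ell)$ is connected, $\mu(\cube_1(2))$ automatically lies in a single path-component --- the required mild commutativity condition.

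\textbf{Condition \ref{ohs:condition2} (homological stability).} Plugging $*$ into all $j$ inputs, the map $D\colon\cW^{2k}_g(j)\to\cW^{2k}_g(0)$ is, through the weak equivalences of \S\ref{highdim}, the map of moduli spaces
\[
\cM^\theta_k(W_{g,j+1},\ell)\longrightarrow\cM^\theta_k(W_{g,1},\ell')
\]
that caps each of the $j$ incoming boundary spheres of $W_{g,j+1}$ with a disk carrying the standard $\theta$-structure. Using the operad axioms, $D$ factors as a composite of $j$ maps each capping a single incoming sphere, $\cM^\theta_k(W_{g,b+1},\ell)\to\cM^\theta_k(W_{g,b},\ell')$. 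Here the homological stability theorem of \cite{GRWII} applies, since $\theta\colon B\to BO(2k)$ is the $k$-connected cover of $BO(2k)$, and shows each such capping map to be a homology isomorphism in a range of degrees $q\le\phi_0(g)$ that tends to infinity with $g$ and is independent of $b$; independence of $b$ uses that the genus of $W_{g,b+1}$ equals $g$ for every $b$. Composing the $j$ maps preserves the range, so $D$ is a homology isomorphism in degrees $q\le\phi(g):=\phi_0(g)$ with $\phi(g)\to\infty$, uniformly in $j$. (Alternatively, one could verify the weakened form of \ref{ohs:condition2} from \S\ref{OHS} directly: by \cite{GRWII} both $\hocolim_g\cW^{2k}_g(j)$ and $\hocolim_g\cW^{2k}_g(0)$ have the homology of a fixed infinite loop space and $D$ induces an equivalence between them.) For $2k=2$ one may instead invoke the classical homological stability for moduli spaces of surfaces with boundary and tangential structure.

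\textbf{The main obstacle.} The genuine work is in condition \ref{ohs:condition2}: identifying $D$ with a stabilization map to which \cite{GRWII} directly applies, confirming that the stability range for capping off a boundary sphere is governed by $g$ alone, so that the $j$-fold composite does not degrade as $j$ grows, and checking the hypotheses on $\theta$. Conditions \ref{ohs:condition1} and \ref{ohs:condition3} are routine but require care with $\theta$-structures, collars near the boundary spheres, and with making the little-cubes map strictly operadic in the chosen model.
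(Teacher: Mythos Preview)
Your verification of condition~\ref{ohs:condition1} and the homological stability argument are broadly along the paper's lines, but there are two substantive discrepancies worth flagging.

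\textbf{The $A_\infty$-map.} The paper does \emph{not} use $\cube_1$. Instead it constructs a bespoke $A_\infty$-operad $\cA^{2k}=B\cT^{2k}$ from the groupoids $\cT^{2k}_n$ of boundaries of thickened planar tiles (\S\ref{highdim}, ``Manifold models of $A_\infty$-operads''). The point is that $\cW^{2k}(j)$ is the classifying space of a groupoid whose object set is the \emph{discrete} set of cobordisms built from the atomic pieces $D,M,H$; the operad composition is induced by a gluing \emph{functor} on these groupoids. A strict operad map $\cube_1\to\cW^{2k}$ would require, for each interval configuration, a specific atomic-built $W_{0,j+1}$ (with $\theta$-structure) such that operadic insertion of configurations corresponds on the nose to gluing of the chosen cobordisms. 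Since gluing two ``standard'' genus-zero cobordisms need not reproduce the chosen standard cobordism for the composed arity (only one diffeomorphic to it), your asserted choice does not obviously exist. The paper's tile operad $\cA^{2k}$ is engineered precisely so that the inclusion of thickened tiles into atomic-built cobordisms is a functor of groupoid-operads, giving a strict map $\mu\colon\cA^{2k}\to\cW^{2k}$ upon taking classifying spaces; the homotopy-commutativity condition then follows since $\cW^{2k}_0(2)$ is path-connected.

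\textbf{The stability range.} Your primary argument invokes the finite-range stability of \cite[1.7]{GRWII}, but that theorem requires $2k\geq 6$; it does not cover $2k=4$, and your separate remark for $2k=2$ leaves $2k=4$ unaddressed. The paper does not attempt to verify the stronger condition~\ref{ohs:condition2} from the introduction. It instead checks the weaker Definition~\ref{HS} from \S\ref{OHS}: that $D_\infty\colon\cW^{2k}_\infty(n)\to\cW^{2k}_\infty(0)$ is a homology isomorphism. This follows from the \emph{stable} (infinite-genus) stability statement \cite[1.3]{GRWII}, recorded in the paper as \autoref{thm:grw:add}, which holds for all $2k\geq 2$ and only needs the pair $(D^{2k},S^{2k-1})$ to be $(k{-}1)$-connected. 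Your parenthetical ``alternatively'' is in fact the argument the paper actually gives; it should be the main line, not an aside.
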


As an immediate consequence we 
construct infinite loop spaces from the classifying spaces of diffeomorphism groups of manifolds
and their mapping class groups. Furthermore, we prove that the map induced by the action of 
$\Diff (W_{g, 1}; \partial)$ on the middle dimensional homology $H_k (W_{g,1}) \simeq \mathbb Z^{2g}$ induces 
a  map of infinite loop spaces (or more precisely a zig-zag of such) from $\Omega ^\infty {\bf MT} ( \theta ) $
to $K$-theory.

\vskip .1in
The operad $\cW^{2k}$ can be considered as a subcategory of the $2k$-dimensional cobordism category as studied in \cite {GMTW} (after a slight modification). 
Thus the above theorem can be interpreted to say that for $2k \geq 2$, topological quantum field theories (TQFT) in topological spaces, i.e., symmetric monoidal functors from the $2k$-cobordism category to the category of spaces, take the $(2k-1)$-sphere to an $A_\infty$-monoid whose group completion is an infinite loop space. One may thus reasonably restrict attention to studying TQFTs with values in (connected) spectra.

\vskip .1in
Finally, we note some restrictions imposed by our definition of an OHS.
Above we have only mentioned new examples of operads for even dimensional manifolds. Even though homological stability has been extended to classes of odd dimensional manifolds by Perlmutter \cite{Perlmutter},  at the moment there is no homological stability result available that allows the number of  boundary components to be reduced (attaching a disk). However this is the homological stability that is needed for condition
\ref{ohs:condition2} of our definition.
The same remark holds for diffeomorphism groups for handlebodies which also have been shown to have homological stability by Perlmutter \cite{Perlmutter2}, and the stable homology been identified by  Botvinnik and Permutter \cite{BotvinnikPerlmutter}.
Similarly, Nariman \cite {Nariman1} proves homological stability results for diffeomorphism groups of manifolds made discrete. Interestingly, for surfaces he proves the weaker stable homology stability needed \cite {Nariman2}. However, for the discrete diffeomorphisms groups, condition \ref{ohs:condition3} also poses a challenge and  needs additional thought.

\subsection*{Outline}In \S\ref{sec:operads} we recall familiar constructions from the theory of operads and monads. We define 
a group completion functor for homotopy associative monoids and establish several important basic properties in \S\ref{group_complete}. 
In \S\ref{OHS} 
we define operads with homological stability  and in \S\ref{sec:freealgohs} we show that the 
group completions of free algebras over such operads are infinite loop spaces. 
We prove our main result in \S\ref{sec:infinite_loop} and show that
 \oalgs{} over operads with homological stability group complete to infinite loop spaces.
Our most important new example, the higher dimensional analog of the surface operad \cite {T}, is constructed and studied in \S\ref{highdim}
We describe various surface examples in the appendix. 

\subsection*{Acknowledgments}
The present paper represents part of the authors' Women in Topology project.  The other part \cite{BBPTY2} will be published elsewhere. 
We would like to thank the organizers of the Women in Topology Workshop for providing a wonderful opportunity for collaborative research and the Banff International Research Station for supporting the workshop. We also thank the Association for Women in Mathematics for partial travel support.

%%%%%%%%%%%%%%%%%%%%%% SECTION 2 %%%%%%%%%%%%%%%%%%

\section{Operads, Monads, and the Bar Construction}\label{sec:operads}

We recall some basic definitions and constructions from \cite {May}.

\vskip .2in

Let $\aU$ denote the category of compactly generated weakly Hausdorff spaces and continuous maps, 
and let $\aT$ denote the category of well pointed compactly generated Hausdorff spaces and based continuous maps.  

%\begin{defn}
  An {\bf operad} is a collection of spaces \[\op=  \coprod_{n\geq 0} \op(n)\] 
  in $\aU$ with a nondegenerate base point $\ast\in \op(0)$, a distinguished element $1\in \op(1)$,
  a right action of the symmetric group $\Sigma_n$ on $\op(n)$  for each $n\geq 0$, and structure maps
  \[ 
   \gamma \colon  \op(k) \times  \op({j_1}) \times \ldots\times \op({j_k}) \to  \op(j_1+\ldots +j_k)
  \]
  for $k\geq 1$ and  $j_s \geq 0$.  
  The structure maps are required to be  associative, unital, and equivariant in the following sense:
  \begin{enumerate}
   \item For all $c\in \op(k)$, $d_s \in \op({j_s})$, and $e_t \in \op({i_t})$
    \[
     \gamma (\gamma(c;d_1,\ldots ,d_k);e_1,\ldots,e_j) = \gamma(c;f_1,\ldots,f_k)
    \] 
    where
    $f_s =\gamma(d_s;e_{j_1+\ldots +j_{s-1}+1},\ldots,e_{j_1+\ldots +j_s})$ 
    and $f_s =d_s$  if $j_s =0$.
   \item For all $d\in \op(j)$ and $c \in \op(k)$
    \[
     \gamma (1;d)=d \text{ and } \gamma(c;1^k)=c.
    \]
   \item  For all $c\in \op(k)$, $d_s \in \op({j_s})$, 
    $\sigma \in \Sigma_k$ and $\tau_s \in \Sigma_{j_s}$
    \[
     \gamma (c\sigma;d_1,\ldots ,d_k) = \gamma (c;d_{\sigma^{-1}(1)},
     \ldots ,d_{\sigma^{-1}(k)})\sigma (j_1,...,j_k)
    \] 
    and
    \[
     \gamma (c;d_1\tau_1,\ldots,d_k\tau_k)=
     \gamma(c;d_1,\ldots, d_k)(\tau_1 \oplus \ldots \oplus \tau_k)
    \]
    where $\sigma(j_1,\ldots,j_k)\in \Sigma_j$ permutes blocks of 
    size $j_s$ according to 
    $\sigma$, and $\tau_1 \oplus \ldots \oplus \tau_k$ denotes the image of
    $(\tau_1,\ldots ,\tau_k)$ under the natural inclusion of 
    $\Sigma_{j_1}\times \ldots \times \Sigma_{j_k}$ into  $\Sigma_j$.
  \end{enumerate}
%\end{defn}

While \cite {May} and most of the literature assumes the space $\op(0)$ of $0$-ary operations is a  single point, 
it is essential for us to allow more general spaces of $0$-ary operations.

\begin{eg}
  There are two discrete operads that play particularly important roles.  
  \begin{itemize}
   \item The $n$th space of the operad $\Com$ is a single point and all 
    structure maps are the unique map to a point.  This has trivial actions of the symmetric groups.
   \item The $n$th space of the operad $\As$ is $\Sigma_n$ and 
    the structure maps are given by block sum of permutations.
  \end{itemize}
\end{eg}

\begin{eg}[Little $n$-cubes operad, {\cite[\S 4]{May}}]\label{little_cubes}
  Let $I^n$ denote the unit $n$-cube and let $J^n$ denote its interior. 
  An {\bf (open) little $n$-cube}  is a linear embedding 
  $\alpha_1\times \ldots \times \alpha_n \colon J^n\to J^n$  
  where $\alpha_i\colon J^1\to J^1$ is defined   by $\alpha_i(t)=a_it+b_i$ for $0<a_i\leq 1$. 

  The $j$th space of the little $n$-cubes operad, $\cube_n(j)$, is the set of 
  $j$-tuples $\langle c_1,\ldots,c_j\rangle $  of little $n$-cubes where the images of the $c_r$
  are pairwise disjoint.  The topology  on $\cube_n(j)$ is induced from  
  the topology on the  space of continuous maps
  \[
   \coprod_jJ^n\to J^n.
  \]  
  The space $\cube_n(0)$ is a single point regarded as 
  the unique ``embedding'' of the empty set in $J^n$. 
  The structure maps are defined by composition of maps
  \[
   \gamma(c;d_1,\ldots,d_k) \coloneqq c\circ (d_1\amalg\ldots\amalg d_k) \colon 
   \left(\coprod_{j_1} J^n\right) \coprod \ldots \coprod\left(\coprod_{j_k}J^n\right)\to J^n
  \] 
  for $c\in \cube_n(k)$ and $ d_s\in \cube_n(j_s)$.  The identity object
  is $1\in \cube_n(1)$ and the action of $\sigma\in \Sigma_j$ is given by 
  $\langle c_1,\ldots,c_j\rangle \sigma=\langle c_{\sigma(1)}, \ldots,c_{\sigma(j)}\rangle$.
\end{eg}

\begin{eg}
  Let $\cM_{g,n}$ denote the moduli spaces of Riemann surfaces of genus $g$ with $n$ parametrized and ordered boundary components. 
  Segal \cite{Segal} constructed a symmetric monoidal category where the objects are finite unions of circles and 
  morphism spaces are  disjoint unions of the spaces $\cM_{g,n}$ with boundary circles 
  divided into incoming and outgoing.   Composition of morphisms is defined by gluing  outgoing 
  circles of one Riemann surface to incoming circles of another.  A conformal field theory in 
  the sense of \cite{Segal} is then a symmetric monoidal functor from this surface category 
  to an appropriate linear category.
 
  By restricting the category  we can define an operad  $\cM$ where 
  \[
   \cM(n)\coloneqq \coprod_{g\geq 0} \mathcal M_{g, n+1}.
  \]
  Note that $\cM (0)$ is non-trivial in this example. 
  The operad $\mathcal M$ contains a natural suboperad $\mathcal P$ 
  of Riemann surfaces of genus zero.  The operad $\mathcal{P}$ is 
  levelwise homotopic to the framed little two disk operad \cite[p. 282]{getzler}.
  
  We describe closely related operads in \S\ref{sec:examples}.
\end{eg}

A {\bf map  of operads} $\op \to  \opt$ is a collection of  $\Sigma_n$-equivariant maps 
$\op(n) \to  \opt(n)$ which commute with the structure maps and preserve $1$ and $*$.

\begin{eg}
The map $\phi_n\colon \cube_n\to \cube_{n+1}$ defined by
\[
 \phi_{n}(j)\left(\langle c_1,\ldots,c_j\rangle\right)\coloneqq 
 \langle c_1\times 1,\ldots,c_j\times 1\rangle \colon J^{n+1}\to J^{n+1}
\] 
is a map of operads.
Each $\phi_{n}(j)$ is an inclusion, and we let $\cube_\infty(j)$ denote the colimit of $\phi_{n}(j)$.  
These spaces assemble to an operad we denote by $\cube_\infty$.
\end{eg}

There are maps of operads $\cube_1\to \As$ and $\cube_\infty\to \Com$.  The 
first is a levelwise $\Sigma_n$-equivariant homotopy equivalence and 
the second is a levelwise homotopy equivalence \cite[3.5]{May}.  
Motivated by these examples, an operad $\op$ is an 
{\bf $A_\infty$-operad} if there is  a levelwise $\Sigma_n$-equivariant homotopy equivalence 
$\op\to \As$ and $\op$ is an {\bf $E_\infty$-operad} 
if the symmetric groups act freely levelwise and there is a levelwise homotopy equivalence $\op\to \Com$.  
This implies that an $E_\infty$-operad is levelwise contractible.

There are many examples where a direct map of operads is difficult to construct and we can only relate operads through a zig-zag of maps.  In these cases we use the product
of operads.

\begin{defn}The {\bf product} of operads $\op$ and $\opt$,   denoted $\op\times \opt$, is defined by 
  $(\op\times \opt)(j)\coloneqq \op(j)\times \opt(j)$  with the following structure:
  \begin{enumerate}
   \item for $c\times c'\in \op(k)\times \opt(k)$ and
    $d_s\times d_s'\in \op(j_s)\times \opt(j_s)$  
    \[
     (\gamma\times \gamma')(c\times c';d_1\times d_1',\ldots,d_k\times d_k')
     \coloneqq \gamma(c;d_1,\ldots,d_k)\times \gamma'(c';d_1',\ldots,d_k');
    \]
   \item $1 \coloneqq  1\times 1\in \op(1)\times \opt(1)$; 
   \item $* \coloneqq * \times * \in \op (0) \times \opt(0)$; and  
   \item  $(c\times c')\sigma \coloneqq c\sigma\times c'\sigma$ 
    for $c\times c'\in \op(j)\times \opt(j)$ and $\sigma\in\Sigma_j$.
  \end{enumerate}
\end{defn}

   For $A_\infty$-operads we refine this definition to operads over $\As$. 
   If $\cA$ and $\cA'$ are  $A_\infty$-operads, 
   $(\cA \times _{\As} \cA ') (n) $ is the pullback of the maps $\cA(n) \to \As (n) $ and 
   $\cA ' (n) \to \As (n)$. The structure maps are defined so that $\cA \times _{\As} \cA '$ 
   is a sub-operad of $\cA \times \cA '$.
   Then $\cA \times _{\As} \cA '$ is  an $A_\infty$-operad and there is a canonical map to $\cA \times \cA '$. 

  An {\bf $\op$-\oalg{}} is a  based space $(X, \ast)$ with  structure maps
  \[
   \theta\colon  \op(j) \times  X^j \to  X
  \]
  for all $j \geq  0$ such that
  \begin{enumerate}
   \item For all $c\in \op(k)$,  $d_s\in \op({j_s})$, and $x_t \in X$
    \[
     \theta(\gamma(c;d_1,\ldots,d_k);x_1,\ldots,x_j) = 
     \theta(c;y_1,\ldots,y_k)
    \]
    where $y_s = \theta(d_s;x_{j_1+\ldots+j_{s-1}+1},
    \ldots,x_{j_1+\ldots+j_s} )$.
   \item  For all $x \in X$
    \[
      \theta(1;x)=x\text{ and }\theta(\ast)=\ast.
    \]
   \item  For all $c\in \op(k)$, $x_s \in X$, $\sigma\in \Sigma_k$
    \[
      \theta (c\sigma;x_1,\ldots,x_k) = 
      \theta(c;x_{\sigma^{-1}(1)},\ldots ,x_{\sigma^{-1}(k)}).
    \]
  \end{enumerate}

A {\bf map of $\op$-\oalgs{}} is a map  $f \colon X \to  Y$ in $\aT$ that commutes with  the structure maps.
We will denote the category of $\op$-\oalgs{} by $\op[\aT].$

An operad map $\op\to \opt$ defines a $\op$-\oalg{} structure on any $\opt$-\oalg{}. Indeed, it defines a functor from
$\opt[\aT]$ to $\op[\aT]$.

\begin{eg}
The point $\ast$ is trivially an $\op$-\oalg{}, the  space $\op(0)$ 
is an $\op$-\oalg{} with structure maps
given by the operad structure maps, and   
every $\op$-\oalg{} $X$ receives a unique $\op$-\oalg{} map $\op(0) \to  X.$  
Additionally,  the map $X \to *$ is an $\op$-\oalg{} map for all $\op$-\oalgs{} $X,$ but   
the map $* \to X$ is an $\op$-\oalg{} map if and only if the map $\op(0) \to  X$ factors through $*.$ 
\end{eg}

For each space $X$ in $\aT$ we can define a free $\op$-\oalg{} on $X$. This
associates a monad to each operad allowing us to use familiar constructions from the theory of monads
in the context of operads \cite[\S 2]{May}.

  If $\op$ is an operad and $(X, \ast)$ is a based space,  the 
  {\bf free $\op$-\oalg{} on $X$} is 
  \[
    \fa{\mop}{X} \coloneqq  \coprod_{n\geq 0} \left(\op(n) 
    \times_{\Sigma_n} X^n\right)\big/ \sim 
  \]
  where $\sim$ is a base point relation generated by
  \[
    (\sigma_ic;x_1,\ldots ,x_{n-1}) \sim  (c;s_i(x_1,\ldots ,x_{n-1}))
  \]
  for all $c \in \op(n)$, $x_i \in X$, and $0 \leq i < n$ where 
  $\sigma_ic = \gamma(c,e_i)$ with 
  \[
    e_i =(1^i,\ast,1^{n-i-1})\in \op(1)^i \times \op(0) \times \op(1)^{n-i-1},
  \] 
  and $s_i(x_1,\ldots ,x_{n-1})=(x_1,\ldots ,x_i,\ast,x_{i+1},\ldots ,x_{n-1})$.
 
 The class of $(1,\ast) \in\op(1) \times X$ is the base point of $\fa{\mop}{X}$.
 Note that it coincides with the class of $\ast \in \op(0).$  

\begin{rmk}
We will occasionally abuse notation and identify points in $\mop(X)$ with their preimage in $\op(n)\times X^n$. 
In particular, we will identify elements of $\op(0)$ with their images in $\mop(X)$. This gives an identification of  $\op(0)$ and $ \mop(\ast) $.
\end{rmk}

The free algebra construction is functorial in $X$ and in $\op$. Any map of pointed spaces $ X\to Y$ induces 
a  map of $\op$-\oalgs{} $\fa{\mop}{X} \to \fa{\mop}{Y}$.  
Any map of operads $\op \to \opt$ gives rise to a map of $\op$-\oalgs{}  $\fa{\mop}{X} \to \fa{\mopt}{X}$.
The construction defines a monad in $\aT$ and 
associates a natural transformation of monads $\mop \to \mopt$ to a  map of operads  $\op \to  \opt.$ 
Hence, it defines a functor from operads in $\aU$ to monads in $\aT$. 

For a monad $\amop$ in a category 
$\aD$, a {\bf $\amop$-\malg{}}  is a pair $(X, \xi)$ 
consisting of an object $X$ in $\aD$ and a  map $\xi\colon \fa\amop X \to  X$ in $\aD$ that is unital and associative.  
A {\bf map of $\amop$-\malgs{}} is a map of spaces  $f\colon X \to Y$ where the following diagram commutes.
\[\xymatrix{
 \fa\amop X \ar[r]^{\fa\amop{f} }\ar[d]_{\xi_X}&\fa\amop Y\ar[d]^{\xi_Y}
  \\
 X\ar[r]^{f} & Y
}\]

The functor from operads to monads described above defines an isomorphism between the categories of $\op$-\oalgs{} 
and $\mop$-\malgs{}. The proofs of these facts in \cite[2.8]{May} carry over verbatim to 
the more general case  where $\op(0)$ is not necessarily a single point.

A {\bf $\amop$-functor} \cite[2.2, 9.4]{May} in a category $\aC$ is a functor $F \colon  \aD \to  \aC$ 
and a unital and associative natural transformation 
$\lambda \colon  F \amop \to F$.  
The {\bf bar construction} \cite[9.6]{May} for a monad $\amop$, $\amop$-\malg{} $X$ and $\amop$-functor $F$, denoted $B_\bullet(F, \amop, X)$, 
is the simplicial object in $\aC$ where $B_q(F, \amop, X)\coloneqq F\amop^q X$.
If $\aC$ is the category $\aT$ of based spaces, we define 
\[
  B(F, \amop, X)\coloneqq |B_\bullet(F, \amop, X)|.
\]

\begin{lem}\label{bar_construction_properties} \cite[9.7--9.10]{May}
 For $\amop$, $F$ and $X$ as above,
 the bar construction has the following properties:
 \begin{enumerate}
  \item For any functor $G \colon\aT \to \aT,$
  $GF$ is a $\amop$-functor in $\aT$ and there is a homeomorphism
   \[
     B (GF, \amop, X ) \cong |G B_{\bullet} (F, \amop, X)|.
   \]
  \item The structure map $\xi\colon \fa{\amop}{X} \to X $ induces a map of $\amop$-\malgs{} $B(\amop, \amop, X)
   \xto{\sim} X$ that is a strong deformation retract of spaces with inverse induced by the unit 
   $\eta_{\amop}\colon X \to \fa{\amop}{X}$.
  \item $F(\xi)$ induces a strong deformation retract of spaces
   $ B (F, \amop, \fa{\amop}{X})\xto{\sim} F(X)$. 
  \item If $\delta\colon \amop \to \amopt$ is a natural transformation of monads, then 
   $\amopt$ is a $\amop$-functor and  $B_{\bullet}(\amopt, \amop, X)$ is a simplicial $\amopt$-\malg{}.
   The composite 
   \[
     \tau\colon X \to B(\amop, \amop, X) \xrightarrow{B(\delta, 1,1)} B(\amopt, \amop, X)
   \] 
   where the first map is induced by the unit $\eta_{\amop}\colon X \to \fa{\amop}{X},$ 
   coincides with the map induced by the unit $\eta_{\amopt}\colon X \to \fa{\amopt}{X}$.
\end{enumerate}
\end{lem}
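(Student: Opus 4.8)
The plan is to verify the four items by expanding $B_q(F,\amop,X)=F\amop^qX$ and then invoking standard properties of geometric realization and of augmented simplicial spaces carrying extra degeneracies; the point-set hypotheses (we work in $\aU$ and $\aT$, and $\op$ has a nondegenerate basepoint) ensure that the simplicial spaces below are proper, so realization preserves the homotopy equivalences we produce. For item (1), I would first note that $G\lambda\colon GF\amop\to GF$ inherits unitality and associativity from $\lambda$, so $GF$ is a $\amop$-functor; then $B_q(GF,\amop,X)=GF\amop^qX=G\bigl(B_q(F,\amop,X)\bigr)$, the faces and degeneracies being obtained by applying $G$ levelwise, so $B_\bullet(GF,\amop,X)=GB_\bullet(F,\amop,X)$ as simplicial spaces and the asserted homeomorphism is the identity after realization.

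For items (2) and (3) the strategy in each case is to exhibit an \emph{extra degeneracy} and then apply the classical lemma that an augmented simplicial space admitting an extra degeneracy realizes to a strong deformation retract onto the augmentation target. In (2), with $F=\amop$ acting on itself through $\mu$, the augmentation is the structure map $\xi\colon\amop X\to X$ on $0$-simplices with section $\eta_\amop\colon X\to\amop X$, and the extra degeneracy on $B_q=\amop^{q+1}X$ adds a copy of $\amop$ on the left via the unit $\eta_\amop$; the simplicial identities then reduce to the unit and associativity axioms for the monad $\amop$. The retraction $B(\amop,\amop,X)\to X$ is equivariant for the outer $\amop$, hence a map of $\amop$-algebras. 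Item (3) is the mirror statement for the free algebra $\amop X$ on a space $X$: the extra degeneracy on $B_q=F\amop^{q+1}X$ is inserted at the free variable via $\eta_X$, freeness making it compatible with the faces, and the induced augmentation onto $F(X)$ — obtained by collapsing the $\amop$'s with the $\amop$-functor structure of $F$ — is again a strong deformation retract after realization.

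For item (4), I would check that $\amopt$ becomes a $\amop$-functor via $\amopt\amop\xrightarrow{\amopt\delta}\amopt\amopt\xrightarrow{\mu'}\amopt$, unitality and associativity coming from those of $\mu'$ together with the naturality of $\delta$ and the fact that $\delta$ intertwines units and multiplications. Since the faces and degeneracies of $B_\bullet(\amopt,\amop,X)$ leave the outermost $\amopt$ untouched and are $\amopt$-equivariant, this is a simplicial $\amopt$-algebra, so $B(\amopt,\amop,X)$ is a $\amopt$-algebra and $B(\delta,1,1)$ a map of such. Finally $\tau$ carries $x\in X$ to the class of $\eta_\amop(x)\in\amop X=B_0(\amop,\amop,X)$ and then applies $\delta$ on that outer factor, producing the class of $\delta(\eta_\amop(x))=\eta_{\amopt}(x)$ because $\delta$ is a map of monads; that class is precisely the image of $x$ under the unit $X\to B(\amopt,\amop,X)$.

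The step I expect to be the main obstacle is not any of these formal verifications but the point-set input needed to upgrade levelwise simplicial homotopy equivalences to honest strong deformation retracts of realizations: one must know that $B_\bullet(\amop,\amop,X)$ and $B_\bullet(F,\amop,\amop X)$ are proper simplicial spaces, which rests on $\op$ having a nondegenerate basepoint and $X$ being well pointed so that the degeneracies are cofibrations. Granting this, everything else is formal; indeed the whole statement is \cite[9.7--9.10]{May}, and since those proofs never use that $\op(0)$ is a single point, they carry over verbatim to our more general setting.
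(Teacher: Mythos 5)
The paper gives no proof of this lemma, simply citing \cite[9.7--9.10]{May}, and your sketch is a faithful reconstruction of May's arguments (extra degeneracies for (2) and (3), functoriality of realization for (1), the composite $\mu'\circ\amopt\delta$ for (4)), together with the correct observation that nothing in those proofs uses $\op(0)=*$. Your proposal is correct and takes the same approach as the cited source.
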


We finish this section with the observation that the bar construction for a monad is closely related to the classifying space construction.
Given a strict monoid $M$, we define an operad $\cM$ where 
\[
  \cM (0) =\ast,\cM(1) = M, \text { and } \cM (n) = \{ \} \text { for } n\geq 2.
\] 
The structure maps of the operad are given by the monoidal product and  $1$ is the monoidal unit.   
(Note that most of the structure maps for this operad are maps from the empty set to itself.)  $\cM$-algebras are spaces with an $M$-action.
The monadic bar construction for the trivial $\cM$-algebra $S^0$  coincides with the realization of the nerve of the category with a single object and morphisms given by  $M$ 
with an additional disjoint base point:
\[
  B(\Id, \mathbb{M}, S^0) = BM_+ \quad \text { where } \quad  BM = | N_\bullet M|.
\]

%%%%%%%%%%%%%%%%%%%%%%%%%%%%%%%%%%%% SECTION 3 %%%%%%%%%%%%%%%%%%%%%%%%%%%%%%%%%%%%%%%%%%%%%%%%%

\section{Monoid Rectification and Group Completion}\label{group_complete}

For every $A_\infty$-operad $\cA$ we define a functor from $\cA$-algebras to groups up to homotopy 
that serves as the `group-completion' alluded to in our main theorem. Our first step is to functorially replace the monoid up to homotopy defined by the $A_\infty$-structure by a homotopy equivalent monoid
with a strictly associative multiplication. 
The second step is to
define a group completion for strictly associative monoids. 

\vskip .2in

\subsection{Monoid rectification}

Let $\cA$ be an $A_\infty$-operad and let  $\delta \colon \mA \to \mAs$ be the map of monads associated to the augmentation of operads $\cA \to \As$. 
For an $\cA$-\oalg{} $X$ define 
  \[ 
    \bvM_{\cA}(X) \coloneqq  B(\mAs,\mA,X).
  \]  
  
\begin{prop}
\label{assocthm}
  For any $A_\infty$-operad $\cA$, the construction  $ \bvM_{\cA}$ defines a functor from 
  the category of $\cA$-\oalgs{} to the category of strictly associative monoids. 
  For every $\cA$-\oalg{} $X$, there is  a strong deformation retract 
  \[
    \rho\colon  X\to \bvM_{\cA}(X)
  \]
  that is natural in $X$. Furthermore, $X$ and $\bvM_{\cA}(X)$  are related by a zig-zag of $\cA$-algebras. 
  Hence $\rho$ induces an isomorphism of monoids on  connected components and an isomorphism of homology Pontryagin rings.
\end{prop}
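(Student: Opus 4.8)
The plan is to deduce the four assertions from \autoref{bar_construction_properties} together with the fact that $\mAs$-\malgs{} are exactly strictly associative (unital) monoids. The latter is formal: since $\As(0)=\As(1)=\ast$ and $\As(2)=\Sigma_2$, equivariance forces the structure maps of an $\As$-\oalg{} $Y$ to be determined by a unit $\ast\to Y$ and a product $Y\times Y\to Y$, and the operad relations become precisely unitality and associativity (cf.\ \cite{May}). Granting this, the first two assertions are immediate: by part (4) of \autoref{bar_construction_properties} applied to $\delta\colon\mA\to\mAs$, the functor $\mAs$ is an $\mA$-functor and $B_\bullet(\mAs,\mA,X)$ is a simplicial $\mAs$-\malg{}; since geometric realization commutes with finite products and colimits in $\aU$, $\mAs$ commutes with realization, so $\bvM_\cA(X)=|B_\bullet(\mAs,\mA,X)|$ is again an $\mAs$-\malg{}, i.e.\ a strictly associative monoid, and $B(\mAs,\mA,-)$ is visibly a functor on $\cA$-\oalgs{} valued in monoid maps.

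Next I would set up the zig-zag. Let $\varepsilon\colon B(\mA,\mA,X)\to X$ be the map of $\mA$-\malgs{} from part (2) of \autoref{bar_construction_properties}, a strong deformation retract of spaces, and let $\iota\colon X\to B(\mA,\mA,X)$ be the section induced by the unit $\eta_\mA$. Part (4) of \autoref{bar_construction_properties} also says $B(\delta,1,1)\colon B(\mA,\mA,X)\to B(\mAs,\mA,X)$ is a map of $\mAs$-\malgs{}, hence by restriction along $\delta$ a map of $\mA$-\malgs{}. Therefore
\[
X\xleftarrow{\ \varepsilon\ }B(\mA,\mA,X)\xrightarrow{\ B(\delta,1,1)\ }B(\mAs,\mA,X)=\bvM_\cA(X)
\]
is a zig-zag of $\cA$-\oalgs{}, which gives the penultimate assertion. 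I set $\rho\coloneqq B(\delta,1,1)\circ\iota\colon X\to\bvM_\cA(X)$; by part (4) of \autoref{bar_construction_properties} this coincides with the natural map $X\xrightarrow{\eta_{\mAs}}\mAs X=B_0(\mAs,\mA,X)\hookrightarrow\bvM_\cA(X)$, so it is manifestly natural in $X$.

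The real work is to see that $\rho$ is a \emph{strong} deformation retract and not merely a homotopy equivalence. Since $\iota$ (equivalently $\varepsilon$) already is one, it suffices to analyze $B(\delta,1,1)$. Here I would use that $\cA\to\As$ is a levelwise $\Sigma_n$-equivariant homotopy equivalence and that $\Sigma_n$ acts freely on $\As(n)=\Sigma_n$, hence, via the equivariant map, on $\cA(n)$; consequently the maps $\cA(n)\times_{\Sigma_n}Y^n\to\As(n)\times_{\Sigma_n}Y^n$ are homotopy equivalences, naturally in $Y$, and passing to the base-point identifications and the coproduct shows $\delta_Y\colon\mA Y\to\mAs Y$ is a homotopy equivalence natural in $Y$. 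Applying this in each simplicial degree, and using that $B_\bullet(\mA,\mA,X)$ and $B_\bullet(\mAs,\mA,X)$ are proper simplicial spaces (well-pointedness in $\aT$), the induced map $B(\delta,1,1)$ on realizations is a homotopy equivalence, so $\rho$ is one as well. To upgrade this, note that $\rho$ is also a closed cofibration (a composite of $\eta_{\mAs}$ with the inclusion of the bottom of the realization, both closed cofibrations in $\aT$); a closed cofibration that is a homotopy equivalence is the inclusion of a strong deformation retract, and one can arrange the retracting data to be natural in $X$. I expect this point-set bookkeeping — properness of the bar constructions, cofibrancy of $\rho$, and naturality of the homotopies — to be the main obstacle; the homotopy-theoretic content is carried entirely by the levelwise equivalence $\delta_Y$.

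Finally, for the last assertion: $\rho$ is a homotopy equivalence, so it induces isomorphisms on $\pi_0$ and on $H_*(-;\mathbb Z)$. The monoid structure on $\pi_0 X$ and the Pontryagin product on $H_*X$ are those induced by a chosen path-component of $\cA(2)$; that component maps to $e\in\Sigma_2=\As(2)$, so under the $\mAs$-\malg{} structure of $\bvM_\cA(X)$ it induces precisely the strict multiplication. The maps $\varepsilon$ and $B(\delta,1,1)$, being maps of $\cA$-\oalgs{}, respect these products, and being homotopy equivalences they induce isomorphisms of $\pi_0$-monoids and of Pontryagin rings; from $\rho=B(\delta,1,1)\circ\iota$ and $\varepsilon\circ\iota=\id$ one gets $\rho_*=B(\delta,1,1)_*\circ(\varepsilon_*)^{-1}$, a composite of such isomorphisms. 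This yields the stated isomorphism of monoids on connected components and of homology Pontryagin rings, and the proposition follows.
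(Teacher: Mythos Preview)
Your approach matches the paper's: the same zig-zag $X \xleftarrow{\varepsilon} B(\mA,\mA,X) \xrightarrow{B(\delta,1,1)} B(\mAs,\mA,X)$ of $\cA$-algebras, the same definition $\rho = B(\delta,1,1)\circ\iota$, with the paper simply citing \cite[A.2, A.4]{May2} for the equivalence $B(\delta,1,1)$ where you unpack the levelwise argument, and you go a bit further by justifying the strong-deformation-retract claim via cofibrancy of $\rho$, which the paper leaves implicit. One small slip: $B(\mA,\mA,X)$ carries no $\mAs$-structure, so part~(4) of \autoref{bar_construction_properties} does not assert that $B(\delta,1,1)$ is an $\mAs$-map---but your immediate correction to ``map of $\mA$-\malgs{} by restriction along $\delta$'' is exactly what is needed and is correct.
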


\begin{proof} 
  The simplicial space $B_{\bullet}(\mAs,\mA,X)$ is a simplicial $\mAs$-\malg{} 
  (i.e. a simplicial topological monoid),  and geometric realization preserves this structure \cite[9.10, 11.7]{May}.
  Thus $\bvM_{\cA}$ defines a functor from $\cA$-\oalgs{} to strict topological monoids. 
  
  \autoref{bar_construction_properties} and  properties of the bar construction give maps of $\cA$-\oalgs{} 
  \begin{equation}\label{rectification}
    X  \xleftarrow{\xi} B(\mA, \mA, X) \xrightarrow{B(\delta, 1,1)} B(\mAs, \mA, X)
  \end{equation}
  where the first map is a strong deformation retract of spaces with right inverse induced by the unit
  $\eta_{\mA}\colon X \to \fa{\mA}{X},$ 
  and the second is a homotopy equivalence  \cite[A.2, A.4]{May2}.  
  Let $\rho$ be the map $B(\delta,1,1)\circ\eta_{\mA}$.  
  This is the desired  homotopy equivalence of spaces which is natural in $X$.   
\end{proof}

\begin{lem}\label{comparemonoids} 
    Suppose $\cA$ and
     $\cA'$ are  $A_\infty$-operads and  $\epsilon\colon \cA \to \cA'$ is a map of operads so that the diagram 
     \[\xymatrix@C=8pt{
       \cA\ar[rr]^\epsilon\ar[dr]^\delta&&\cA'\ar[ld]_{\delta'}\\&\As
     }\] 
     commutes. 
     If $X$ is an $\cA'$-\oalg{} (regarded as an $\cA$-\oalg{} via $\epsilon$), there is a map of monoids 
     $\bvM_\cA(X)\to  \bvM_{\cA'}(X)$ that is a homotopy equivalence.
\end{lem}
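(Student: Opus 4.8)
The plan is to realise the comparison map as the geometric realisation of an explicit map of simplicial monoids, and then to recognise it as a homotopy equivalence by fitting it into the rectification triangle of \autoref{assocthm}. Recall from \S\ref{sec:operads} that $\epsilon$ induces a natural transformation of monads $\nu\colon\mA\to\mA'$, and that the commuting augmentation triangle says exactly that $\delta=\delta'\circ\nu$ on the level of monads (I write $\delta,\delta'$ also for the monad maps to $\mAs$). Moreover the $\cA$-algebra structure on $X$ is by construction $\xi\coloneqq\xi'\circ\nu_X$, where $\xi'\colon\mA'X\to X$ is the given $\cA'$-structure. I would then define $f_q\colon\mAs\mA^qX\to\mAs(\mA')^qX$ by applying $\nu$ to each of the $q$ inner copies of $\mA$, leaving the outer $\mAs$ and the final $X$ untouched.

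First I would check that $f_\bullet$ is a map of simplicial spaces $B_\bullet(\mAs,\mA,X)\to B_\bullet(\mAs,\mA',X)$: compatibility with the inner face maps is the compatibility of $\nu$ with the monad multiplications; with the degeneracies, that $\nu$ preserves units; with the last face $d_q$, the identity $\xi=\xi'\circ\nu_X$; and with $d_0$, which uses the $\mA$-functor structure $\mu_{\mAs}\circ(\mAs\delta)$ on $\mAs$, the relation $\delta=\delta'\circ\nu$. Since both $B_\bullet(\mAs,\mA,X)$ and $B_\bullet(\mAs,\mA',X)$ are simplicial $\mAs$-algebras via their leftmost $\mAs$-factor (this is what makes $\bvM_\cA$ and $\bvM_{\cA'}$ monoid-valued) and $f_q$ is the identity on that factor, $f_\bullet$ is a map of simplicial $\mAs$-algebras. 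Geometric realisation then produces the desired map of monoids $f\colon\bvM_\cA(X)\to\bvM_{\cA'}(X)$, manifestly natural in $X$.

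To see $f$ is a homotopy equivalence I would use \autoref{assocthm}, which gives natural strong deformation retracts $\rho\colon X\to\bvM_\cA(X)$ and $\rho'\colon X\to\bvM_{\cA'}(X)$; by \autoref{bar_construction_properties}(4) each of these is the map induced by the unit $\eta_{\mAs}$, namely the inclusion of $\mAs X=B_0$ into the realisation. As $f_0$ is the identity of $\mAs X$, this forces $f\circ\rho=\rho'$, and since $\rho$ and $\rho'$ are homotopy equivalences, two-out-of-three shows $f$ is too. The only real work is the verification of the simplicial identities for $f_\bullet$ in the previous paragraph, and that is where every hypothesis (that $\nu$ is a map of monads, that the augmentation triangle commutes, and that the $\cA$-structure on $X$ is restricted from its $\cA'$-structure) is used; it is bookkeeping rather than a genuine obstacle, and everything afterwards is formal.
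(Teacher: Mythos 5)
Your proposal constructs exactly the map the paper uses, namely $B(1,\epsilon,1)$ (the levelwise application of the induced monad map $\nu$ to the inner bar-construction coordinates, which is a map of simplicial $\mAs$-algebras because it is the identity on the outer $\mAs$), and deduces that it is a homotopy equivalence by composing with the rectification equivalences to $X$. This is essentially the paper's argument, just phrased via the factorization $f\circ\rho=\rho'$ and two-out-of-three rather than by chasing the explicit commutative square involving $B(\mA,\mA,X)$ and $B(\mA',\mA',X)$.
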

In particular, rectification is independent of the choice of $A_\infty$-operad.  
When clear from context, we will drop the subscript from the notation for the rectification functor.

\begin{proof}
  There is a commutative diagram 
  \[\xymatrix@C=40pt@R=5pt{
     &B(\mA, \mA, X)\ar[r]^-{B(\delta, 1,1)}\ar[ld]_-{\xi} \ar[dd]^{B(\epsilon,\epsilon,1)}
    &B(\mAs, \mA, X)\ar[dd]^{B(1,\epsilon,1)}
    \\
    X
    \\
    &B(\mA', \mA', X) \ar[r]^-{B(\delta', 1,1)} \ar[ul]^-{\xi'}&B(\mAs, \mA', X)
  }\]
  where all arrows are homotopy  equivalences and $B(1, \epsilon,1)$ is a map of $\As$-algebras.
\end{proof}

The functoriality of the rectification functor is well-understood and we leave the proof of the following result as an exercise.

\begin{lem}\label{rectproperties} Let $\cA$ and $\cA'$ be $A_\infty$-operads.
 \begin{enumerate}
  \item\label{rectproduct} If $X$ and $X'$ are $\cA$- and $\cA'$-algebras, then $X\times X'$ is a $\cA \times_{\As} \cA'$-algebra and
    \[
    \bvM_{\cA \times_{\As} \cA'} (X\times X') \simeq \bvM_{\cA}( X) \times \bvM_{\cA'} (X').
    \]
  \item\label{rectrealization} If $X_\bullet$ is a simplicial $\cA$-algebra then its realization is an $\cA$-algebra and 
   \[
     \bvM_{\cA} |X_\bullet| \simeq |\bvM_{\cA} X_\bullet| . 
   \]
 \end{enumerate}
\end{lem}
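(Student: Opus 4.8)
The plan is to reduce both statements to the corresponding compatibilities of the monadic bar construction, which are already packaged in \autoref{bar_construction_properties}. For part \eqref{rectproduct}, I would first observe that the free $\cA\times_{\As}\cA'$-algebra functor on a product $X\times X'$ satisfies a Fubini-type identity: since $(\cA\times_{\As}\cA')(n)$ is the pullback of $\cA(n)\to\As(n)\leftarrow\cA'(n)$, and the monad $\mathbb{A}$ is built from $\coprod_n \cA(n)\times_{\Sigma_n}(-)^n$, there is a natural homeomorphism of monads (acting on pairs) identifying $\mathbb{A}\times_{\mAs}\mathbb{A}'$ applied to $X\times X'$ with $\mathbb{A}X$ and $\mathbb{A}'X'$ recorded diagonally; more simply, for a product of algebras the iterated monad $(\mathbb{A}\times_{\mAs}\mathbb{A}')^q(X\times X')$ is naturally homeomorphic to $\mathbb{A}^q X \times_{(\mAs)^q} (\mathbb{A}')^q X'$, compatibly with all simplicial structure maps. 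Applying $\mAs$ levelwise and taking realizations, one gets
\[
  B(\mAs,\mathbb{A}\times_{\mAs}\mathbb{A}',X\times X') \;\cong\; B(\mAs,\mathbb{A},X)\times_{B(\mAs,\mAs,S^0)\text{-ish}}\cdots
\]
— rather than fuss with the fibered form, I would in fact just use that geometric realization commutes with finite products (all spaces are compactly generated) and that $B_q$ of the product is $B_q$ of the factors crossed over the $\As$-bar construction, which is levelwise the point-set identity; the resulting map of monoids $\bvM_{\cA\times_{\As}\cA'}(X\times X')\to\bvM_\cA(X)\times\bvM_{\cA'}(X')$ is then a levelwise homotopy equivalence of simplicial spaces, hence a homotopy equivalence after realization by the realization lemma for good simplicial spaces.

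For part \eqref{rectrealization}, the key input is part (1) of \autoref{bar_construction_properties}: for any functor $G\colon\aT\to\aT$, $B(GF,\amop,X)\cong |G\,B_\bullet(F,\amop,X)|$. Taking $G$ to be geometric realization of the external simplicial direction, one rewrites $\bvM_\cA|X_\bullet| = B(\mAs,\mathbb{A},|X_\bullet|)$ as a bidegreewise bar construction $B(\mAs,\mathbb{A},X_p)$ in the variable $p$, and then invokes that realization of a bisimplicial space may be computed in either order (equivalently, that $|{-}|$ commutes with $|{-}|$). Concretely: $B_q(\mAs,\mathbb{A},|X_\bullet|) = \mAs\,\mathbb{A}^q|X_\bullet|$, and since $\mAs$ and $\mathbb{A}$ are built from $\coprod_n(-)\times_{\Sigma_n}(-)^n$ — colimits and finite products, both of which commute with geometric realization in $\aU$ — this is naturally homeomorphic to $|\mAs\,\mathbb{A}^q X_\bullet| = |B_q(\mAs,\mathbb{A},X_\bullet)|$. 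These homeomorphisms are compatible with the simplicial operators in $q$, so assembling over $q$ and realizing gives the stated homeomorphism (hence, a fortiori, homotopy equivalence) $\bvM_\cA|X_\bullet|\cong |\bvM_\cA X_\bullet|$, and one checks it is a map of monoids because the monoid structure on each side is induced from the strict $\mAs$-action, which the homeomorphisms respect.

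I expect the main obstacle to be purely bookkeeping: in part \eqref{rectproduct}, carefully identifying the fibered product of monads $\mathbb{A}\times_{\mAs}\mathbb{A}'$ with the monad of the operad $\cA\times_{\As}\cA'$ and verifying that its $q$-fold iterate on $X\times X'$ decomposes as claimed, including tracking the $\Sigma_n$-quotients and basepoint relations through the pullback. This is the kind of "follow your nose" verification the authors explicitly leave as an exercise; there is no real difficulty, only the need to be consistent about which copy of $\As$ everything maps to. In part \eqref{rectrealization} the only subtlety worth a sentence is that all the relevant commuting-with-realization statements hold in $\aU$ (compactly generated spaces), where products and quotients are well behaved, and that the simplicial spaces in sight are proper/good so that levelwise equivalences realize to equivalences — points already used freely in \cite{May} and in the proof of \autoref{assocthm}.
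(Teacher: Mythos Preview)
The paper does not prove this lemma --- it is explicitly left as an exercise --- so there is no argument to compare against. Your treatment of part~\eqref{rectrealization} is correct and is the standard one: since $\mA$ and $\mAs$ are built from finite products, coproducts, and quotients, all of which commute with geometric realization in $\aU$, one has $\mAs\,\mA^q|X_\bullet|\cong|\mAs\,\mA^q X_\bullet|$ naturally in $q$, and interchanging the two realizations of the resulting bisimplicial space yields the claim.

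Your argument for part~\eqref{rectproduct}, however, contains a genuine error, not merely bookkeeping. You assert that the comparison map is a \emph{levelwise} homotopy equivalence of simplicial spaces, but it is not: at simplicial level $0$ the map is $\mAs(X\times X')\to\mAs(X)\times\mAs(X')$, and the James construction $\mAs$ does not preserve products --- already for $X=X'=S^0$ the source is the free monoid on three generators while the target is $\mathbb N\times\mathbb N$. The fibered-product formulas you sketch cannot rescue this, since for distinct $X$ and $X'$ there is no common base over which $\mA^qX$ and $(\mA')^qX'$ sit. A clean fix avoids any levelwise comparison: the projections $X\times X'\to X,\,X'$ together with the operad projections $\cA\times_{\As}\cA'\to\cA,\,\cA'$ induce, by functoriality of $\bvM$ and \autoref{comparemonoids}, a monoid map $\bvM_{\cA\times_{\As}\cA'}(X\times X')\to\bvM_\cA(X)\times\bvM_{\cA'}(X')$; precomposing with the natural equivalence $\rho$ of \autoref{assocthm} gives the product $\rho\times\rho'$, which is again an equivalence, so two-out-of-three shows the monoid map is an equivalence.
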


\subsection{Constructing a group completion}
For an algebraic monoid, group completion is defined by formally inverting elements; for a topological monoid, a natural analog is to use $\Omega B$, the based loops of the bar construction.
Motivated by this observation, we define 
the {\bf group completion} of a strictly associative topological monoid $M$ to be 
\[
  \g M \coloneqq \Omega BM
\]
where $BM=|N_\bullet M|$, the geometric realization of the nerve of $M$ considered as a category with one object.
The identification of $M$ with the 1 simplices in the nerve $N_\bullet M$ 
extends to  a canonical map  $[0,1] \times  M \to BM$ which factors through $S^1 \wedge M$. Taking its adjoint defines a natural map 
\[g\colon M\longrightarrow \Omega BM = \g M.\]

\begin{thm}
\label{gcthrm}
  The assignment  $M \mapsto \g M$ defines a functor $\g$ from well-pointed strictly associative topological monoids 
  to $\Omega $-spaces and $g\colon \Id \to \g$ is a natural transformation. 
  
  Additionally, group completion has the following 
  properties:
  \begin{enumerate}
    \item \label{gc:product}
     For monoids $M$ and $M'$,
     $\g(M \times M') \simeq \g(M) \times \g(M').$ 
    \item\label{gc:grouplike} If $M$ is a  grouplike monoid, then
     $g\colon M \to \g(M)$ is a weak equivalence.
    \item \label{gc:equivalence}
     If a map of monoids $M \to M'$ is a homotopy equivalence, then $\g M \to \g M'$ is a homotopy equivalence.
    \item \label{gc:monoid} If $M_{\bullet}$ is a simplicial monoid, 
       then    $|\g M_{\bullet}|\simeq \g|M_{\bullet}|.$
 \end{enumerate}
\end{thm}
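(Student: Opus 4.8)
The plan is to establish the four properties essentially by unwinding the definition $\g M = \Omega BM$ and invoking standard facts about the nerve/classifying-space construction, the group completion theorem, and compatibility of realization with loop spaces.

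First I would verify functoriality and naturality of $g$: a map of monoids $f\colon M\to M'$ induces a functor of one-object categories, hence a map $BM\to BM'$, hence $\Omega BM\to\Omega BM'$; all of this is strictly functorial, and the map $[0,1]\times M\to BM$ is natural in $M$, so $g\colon\Id\to\g$ is a natural transformation. That $\g M$ is an $\Omega$-space is immediate since $\Omega$ of anything based is an $\Omega$-space (or: $BM$ deloops $\g M$ by construction). For \eqref{gc:product}, the nerve functor sends products of categories to products of simplicial sets and $|{-}|$ commutes with finite products of simplicial spaces (Milnor), so $B(M\times M')\cong BM\times BM'$; then $\Omega$ commutes with products, giving $\g(M\times M')\simeq\g M\times\g M'$ — naturally. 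For \eqref{gc:monoid}, I would argue that for a simplicial monoid $M_\bullet$ one has $B|M_\bullet|\simeq |BM_\bullet|$ (realization commutes with the bar/nerve construction, since $BM_\bullet = |N_\bullet M_\bullet|$ is a bisimplicial object and realization of bisimplicial spaces is computed either way, up to the usual cofibrancy/properness hypotheses guaranteed by well-pointedness), and that $\Omega$ commutes with realization of simplicial spaces that are degreewise connected or more generally satisfy a $\pi_*$-Kan/properness condition — here I would cite the standard result (e.g. from May or Segal) that $\Omega|X_\bullet|\simeq|\Omega X_\bullet|$ for good simplicial spaces $X_\bullet$. Combining, $\g|M_\bullet| = \Omega B|M_\bullet|\simeq\Omega|BM_\bullet|\simeq|\Omega BM_\bullet| = |\g M_\bullet|$.

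For \eqref{gc:equivalence}, if $M\to M'$ is a homotopy equivalence of well-pointed monoids, then on nerves $N_\bullet M\to N_\bullet M'$ is a degreewise homotopy equivalence of (proper, by well-pointedness) simplicial spaces, so the realizations $BM\to BM'$ are homotopy equivalences, and applying $\Omega$ preserves this. Property \eqref{gc:grouplike} is the one substantive input: when $\pi_0 M$ is a group, the canonical map $M\to\Omega BM$ is a weak equivalence. I would not reprove this; rather I would cite it as the classical fact that $BM$ is a delooping of a grouplike $M$ (it follows from the group completion theorem, or directly from the fact that for grouplike $M$ the bar construction $EM\to BM$ with fiber $M$ is a quasifibration and $EM$ is contractible — see Segal, or May's ``Classifying spaces and fibrations'').

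The main obstacle I expect is the bookkeeping in \eqref{gc:monoid}: making precise the double interchange $\Omega B|M_\bullet|\simeq|{\g M_\bullet}|$ requires that the bisimplicial space $N_p(N_q M_\bullet)$ be sufficiently well-behaved (Reedy cofibrant / proper) for both ``diagonal vs.\ iterated realization'' and ``$\Omega$ commutes with $|{-}|$'' to apply, and for the latter one typically needs each $N_p M_\bullet$, or at least the relevant realizations, to be connected or to satisfy the $\pi_*$-Kan condition so that the spectral-sequence/quasifibration argument goes through. Well-pointedness of $M_\bullet$ handles properness; the connectivity subtlety is the point to be careful about, and I would phrase the statement with whatever mild hypothesis (e.g.\ the simplicial monoid is such that the bar constructions are proper, which is automatic in the applications) is needed, pointing to \cite[\S11--12]{May} for the precise technical framework. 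Everything else is formal manipulation of nerves and loop spaces.
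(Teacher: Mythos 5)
Your proposal is correct and matches the paper's proof essentially step for step: products of nerves/realizations/loops for (1), a citation for the grouplike case (the paper cites McDuff--Segal), levelwise equivalence of nerves for (3), and the bisimplicial interchange plus commuting $\Omega$ past realization for (4). The only thing you hedge on that the paper dispatches in a line is the connectivity needed in (4): each $BM_n$ is \emph{automatically} connected (it is the classifying space of a monoid), so no extra hypothesis on $M_\bullet$ is required beyond well-pointedness.
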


  We assume  that all our monoids are well-pointed in the sense of \cite [A.8] {May}.
  This will ensure that the bar construction is a `good' simplicial space in the sense of \cite [A.4]{Segal}.
  Thus the fat realization agrees with the usual realization and levelwise homotopy equivalences 
  induce homotopy equivalences on total spaces.

\begin{proof} [Proof of \autoref{gcthrm}]
  These are standard facts. Assertion (\ref{gc:product}) is an immediate consequence of the fact that 
  the  nerve  $N_\bullet (-)$, geometric realization $| - |$, and taking loops $\Omega (-)$ 
  are all functors that take products to products. 
  For (\ref{gc:grouplike}) see, for example, McDuff-Segal \cite[1] {gp}.
  For (\ref{gc:equivalence}) note that the induced  map of nerves   $N_\bullet M \to  N_\bullet M'$ 
  is a levelwise homotopy equivalence, and hence induces a homotopy equivalence on realizations.  
  Finally for (\ref{gc:monoid}), note that for any simplicial monoid $M_{\bullet}$,  
  \[
    |B M_{\bullet}| \cong B |M_{\bullet}|
  \] 
  as the order in which the realization of bisimplicial objects is taken can be interchanged. 
  Furthermore,  as each $B M_n$ is connected, taking based loops $\Omega $ 
  commutes with the realization functor, compare \cite[12.3]{May}, 
  and hence  $|\Omega B M_{\bullet}| \cong \Omega|B M_{\bullet}|$ and so
  \[
    |\Omega B M_{\bullet}| \cong \Omega B |M_{\bullet}|.\qedhere
  \] 
\end{proof}

We extend the above discussion to $A_\infty$-algebras by first applying the rectification functor. Thus, given 
an $A_\infty$-operad $\cA$ and an $\cA$-\oalg{} $X$  define 
\[
  \g (X) \coloneqq \Omega B (\bvM _{\cA}(X)) = \Omega B (B(\mAs, \mA, X)),
\]
and let $g\colon X \to \g(X)$ be the composite 
\[ 
  X \xto{\rho}  B(\mAs, \mA, X)=\bvM_{\cA}(X)\longrightarrow \Omega B \bvM_{\cA}(X)=\g (X).
\]  
With these definitions, group completion defines a functor from the category of $\cA$-algebras 
to the category of loop spaces, and \autoref{gcthrm}  
remains valid. Indeed, (\ref{gc:product}) follows from \autoref{rectproperties}(\ref{rectproduct}), (\ref{gc:grouplike}) and (\ref{gc:equivalence})
hold since weak equivalences are preserved by the monoid rectification functor $\bvM _{\cA}$, and (\ref{gc:monoid}) is a consequence of the observation that  $\bvM_{\cA} $ commutes up to homotopy with realization for simplicial $\cA$-algebras in \autoref{rectproperties}(\ref{rectrealization}).

%\vskip .4in
\subsection{Group completion theorem}

 We recall the group completion theorem from \cite {Quillen}; also see  \cite {gp,RandalWilliams}.

\begin{thm}\label{thm:gpcompletion}
  Let $M$ be a well-pointed topological monoid.  If $\pi_0 (M)$ is in the center of $H_*(M; k)$ then 
  \[
    {g}_*\colon H_{*}(M;k)[(\pi_0 M)^{-1}]\to H_*(\g M;k)
  \]
  is an isomorphisms of $k$-algebras for all commutative coefficient rings $k.$
\end{thm}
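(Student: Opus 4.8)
The plan is to deduce this from the classical group completion theorem of Quillen–McDuff–Segal, reducing our topological setting to the hypotheses under which that statement is known. Concretely, I would prove \autoref{thm:gpcompletion} by citing \cite{Quillen} (or the modern treatments \cite{gp,RandalWilliams}) for a well-pointed topological monoid $M$ whose $\pi_0$ acts centrally on $H_*(M;k)$. The key point is that under these hypotheses the homology of $\g M = \Omega BM$ is computed by localizing: the natural map $g\colon M \to \Omega BM$ induces an isomorphism after inverting the classes of $\pi_0 M$ in the Pontryagin ring $H_*(M;k)$.

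First I would recall the telescope construction: choosing a countable cofinal sequence of elements $\{m_i\}$ representing (products of) generators of the monoid $\pi_0 M$, one forms the mapping telescope $M_\infty = \hocolim(M \xrightarrow{\cdot m_1} M \xrightarrow{\cdot m_2} \cdots)$, whose homology is exactly $H_*(M;k)[(\pi_0 M)^{-1}]$ because homology commutes with filtered homotopy colimits and the centrality hypothesis guarantees the localization is well-behaved (independent of the chosen cofinal sequence, and a genuine Ore localization). Second, I would use the standard fact that $BM$ is built from the nerve $N_\bullet M$, and analyze the fibration sequence $M \to EM \to BM$ up to homotopy — more precisely, one compares the bar construction $B(M,M,\ast)$ (a model for a free contractible $M$-space) with the one-sided bar construction, and shows the map from the telescope $M_\infty$ to the homotopy fiber of $\ast \to BM$ over the basepoint, i.e. to $\Omega BM$, is a homology isomorphism. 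This is where the well-pointedness is used: it ensures $N_\bullet M$ is a good (Reedy cofibrant) simplicial space so that realization has the expected homotopical behaviour and the quasifibration arguments of McDuff–Segal apply.

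The main obstacle — and the heart of the classical proof — is establishing that the relevant map $EM \times_M M_\infty \to BM$ (or the appropriate variant) is a quasifibration, so that its fiber has the homology of $M_\infty$. This requires the centrality of $\pi_0 M$ in $H_*(M;k)$ in an essential way: without it, translation by elements of $\pi_0 M$ does not act by homology isomorphisms on the fibers in a compatible manner, and the gluing lemma for quasifibrations fails. I would handle this by invoking the Dold–Thom style criteria for quasifibrations applied over the skeletal filtration of $BM$, exactly as in \cite[\S1]{gp}, noting that the centrality hypothesis is precisely what makes the induced maps on homology of fibers into isomorphisms. Finally, assembling the pieces: $H_*(\g M;k) = H_*(\Omega BM;k) \cong H_*(M_\infty;k) \cong H_*(M;k)[(\pi_0 M)^{-1}]$, and one checks this chain of isomorphisms is compatible with the ring structures (the Pontryagin product on $M$, its localization, and the loop-sum product on $\Omega BM$), which follows because all the maps in sight are maps of $H$-spaces up to homotopy. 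Since this is entirely standard, I would keep the exposition brief and refer to \cite{Quillen,gp,RandalWilliams} for the details rather than reproduce them.
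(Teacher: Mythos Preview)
Your proposal is correct and matches the paper's approach: the paper does not prove \autoref{thm:gpcompletion} at all but simply recalls it as the classical group completion theorem, citing \cite{Quillen} and \cite{gp,RandalWilliams} exactly as you do. Your sketch of the McDuff--Segal quasifibration argument is accurate but goes well beyond what the paper includes, so you could safely shorten it to a bare citation.
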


\begin{rmk}
  This result also holds under the hypothesis that 
  $H_*(M;k) [\pi_0(M) ^{-1}]$ can be constructed by right fractions \cite {Quillen,RandalWilliams}.
\end{rmk}

If $\pi_0(M)$ is finitely generated by $s_1, \dots , s_n$, the localization 
can be achieved by inverting the product $s\coloneqq s_1\dots  s_n$:
\[
  H_{*}(M;k)[(\pi_0 M)^{-1}] =  H_{*}(M;k)[s^{-1}]. 
\]
The right term can be identified with the homology of  the telescope 
\[
  \hocolim_ {\tilde s }M = \text {Tel} (M \xto {\tilde s}  M \xto {\tilde s} M \xto {\tilde  s }  \dots)
\]
where $\tilde s\in M$ is in the connected component corresponding to $s$.
Thus, if we let 
\[
  M_\infty \coloneqq (\hocolim _{\tilde s} M)_{\id}
\]
denote the connected component of the identity of $ \hocolim_ {\tilde s }M$, the homology of a connected component of the group completion is  given by
\begin{equation}\label{gpcompletioniso}
  H_*((\g M)_{\id} ; k) = H_*(M_\infty; k).
\end{equation}

If the monoid is homotopy commutative, there is a refinement of the above theorem which allows for abelian coefficient systems.
We thus have the following corollary, compare  \cite[1.2]{RandalWilliams}.

\begin{cor}
  Let  $M$ be a homotopy commutative monoid with $\pi_0 (M)$ finitely generated  
  and $M_\infty$ defined as above. 
  Then the commutator subgroup of the fundamental group 
  $\pi_1 (M_\infty)$ is perfect and there is a weak homotopy equivalence
  \[
    (\g M)_{\id} \simeq M_\infty ^+,
  \]
  where $Z^+$ denotes Quillen's plus construction.
\end{cor}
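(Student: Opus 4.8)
The plan is to deduce the corollary from the homotopy-commutative refinement of the group completion theorem invoked just above (compare \autoref{thm:gpcompletion} and \cite[1.2]{RandalWilliams}) together with standard properties of Quillen's plus construction. Write $Y:=(\g M)_{\id}$ and let $h\colon M_\infty\to Y$ be the map induced by $g$ on identity components. By \eqref{gpcompletioniso}, and by the version of the group completion theorem allowing abelian local coefficients applied along the telescope, $h$ induces an isomorphism on $H_*(-;\mathcal L)$ for every abelian local system $\mathcal L$ on $Y$ pulled back along $h$. Since $Y$ is a path component of the loop space $\g M=\Omega BM$, the group $\pi_1(Y)$ is abelian; hence every local system on $Y$ is an abelian coefficient system, and $h$ is therefore a homology isomorphism with coefficients in an arbitrary $\mathbb Z[\pi_1 Y]$-module pulled back along $h$.

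First I would identify the kernel $K:=\ker\bigl(h_*\colon\pi_1(M_\infty)\to\pi_1(Y)\bigr)$. Taking $\mathcal L$ constant and comparing first homology, the induced map $\pi_1(M_\infty)^{\mathrm{ab}}\to\pi_1(Y)$ is an isomorphism (using that $\pi_1(Y)$ is already abelian). The commutator subgroup $[\pi_1(M_\infty),\pi_1(M_\infty)]$ lies in $K$ because the target is abelian, and conversely $K$ maps to $0$ in $\pi_1(M_\infty)^{\mathrm{ab}}$ by the injectivity just noted, so $K=[\pi_1(M_\infty),\pi_1(M_\infty)]$; in particular $h_*$ is surjective onto $\pi_1(Y)$. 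Next I would show $K$ is perfect. Let $\widehat M_\infty\to M_\infty$ be the cover classified by $K$, i.e.\ the pullback along $h$ of the universal cover $\widetilde Y\to Y$. By covering space theory $H_*(\widehat M_\infty)\cong H_*(M_\infty;h^*\mathbb Z[\pi_1 Y])$, which by the previous paragraph is isomorphic to $H_*(Y;\mathbb Z[\pi_1 Y])=H_*(\widetilde Y)$; since $\widetilde Y$ is simply connected this forces $H_1(\widehat M_\infty)=0$, i.e.\ $K^{\mathrm{ab}}=\pi_1(\widehat M_\infty)^{\mathrm{ab}}=0$. Hence $K$ is perfect, which is the first assertion of the corollary; moreover $K$ is the maximal perfect subgroup of $\pi_1(M_\infty)$ because $\pi_1(M_\infty)/K$ is abelian.

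Finally I would carry out the plus-construction comparison. Let $q\colon M_\infty\to M_\infty^+$ be the plus construction with respect to the perfect normal subgroup $K$: it is acyclic, i.e.\ an isomorphism on homology with all local coefficients, and induces $\pi_1(M_\infty^+)\cong\pi_1(M_\infty)/K$. Since $h$ kills $K$ on $\pi_1$ and $q$ is acyclic, the universal property gives a factorization, up to homotopy, $M_\infty\xrightarrow{\,q\,}M_\infty^+\xrightarrow{\,\bar h\,}Y$. On $\pi_1$ the map $\bar h$ realizes the isomorphism $\pi_1(M_\infty)/K\cong\pi_1(Y)$; on homology with coefficients in any local system on $Y\simeq M_\infty^+$, the map $\bar h_*$ is an isomorphism since both $q_*$ and $h_*$ are. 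Passing to universal covers, $\bar h$ becomes an integral homology isomorphism of simply connected spaces, hence a weak equivalence by Hurewicz and Whitehead; thus $\bar h\colon M_\infty^+\xrightarrow{\,\simeq\,}(\g M)_{\id}$, as required.

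The main obstacle is the careful bookkeeping of local coefficient systems: one must verify that the homotopy-commutative refinement of the group completion theorem, together with the identification \eqref{gpcompletioniso} of a component of $\g M$ with $M_\infty$, upgrades to the $\mathbb Z[\pi_1 Y]$-coefficients needed to compute the homology of the cover $\widehat M_\infty$ and of the universal cover of $M_\infty^+$ — the point being precisely that $\pi_1(Y)$ is abelian, so these are abelian coefficient systems. Once this is secured, every remaining step is formal.
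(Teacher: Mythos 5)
Your proof is correct and reproduces the standard argument; the paper itself does not prove this corollary but instead defers to \cite[1.2]{RandalWilliams}, whose proof runs along exactly these lines (using that $\pi_1((\g M)_{\id})$ is abelian because it is a fundamental group of a loop space, so the group-completion theorem with abelian coefficients applies to $\mathbb Z[\pi_1]$, forcing $H_1$ of the cover corresponding to the commutator subgroup to vanish, then invoking the plus construction's universal property). The only caveat worth making explicit is that when you pass from the acyclicity of $q$ and the abelian-coefficient isomorphism for $h$ to conclude the same for $\bar h$, you should note that the relevant local system $\mathbb Z[\pi_1 Y]$, pulled back to $M_\infty^+$, is still abelian precisely because $\pi_1(M_\infty^+)\cong\pi_1(M_\infty)^{\mathrm{ab}}$; once that is observed the covering-space/Whitehead step is as you say.
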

 
Because the Pontryagin homology ring of an $A_\infty$-algebra is isomorphic to that of its rectification, the discussion above immediately also extends to any such  monoid up to homotopy. 

\begin{rmk}
 Any $\cube _2$-algebra (or more generally any $E_2$-algebra) is  
 homotopy commutative and hence the conditions of the group completion theorem, \autoref{thm:gpcompletion}, 
 and its corollary are immediately satisfied.
\end{rmk}

%%%%%%%%%%%%%%%%%%% SECTION 4  %%%%%%%%%%%%%%%%%%%%%%%%%%%%%%%%%%%%%%

\section{Operads with homological stability (OHS)}\label{OHS}
 
In this section we define the operads of primary interest. In particular, they are graded
and receive a map from an $A_\infty$-operad.
 
\begin{defn}\label{gradedV}
  Let $I$ be a commutative, finitely generated monoid with identity element $0$.  
  An {$I$-\bf grading} on an operad $\op$ is a decomposition 
  \[
    \op(n)=\coprod_{g \in I}\op_{g}(n)
  \] 
  for each $n$ so  that:
  \begin{enumerate}
    \item the basepoint $*$ lies in $ \op_0(0)$,
    \item the $\Sigma_n$ action on $\op(n)$ restricts to an action on each $\op_{g}(n)$, and
    \item the structure maps restrict to maps 
      \[
        \gamma \colon  \op_g(k) \times  \op_{g_1}({j_1}) \times \ldots
        \times \op_{g_k}({j_k}) \to  \op_{g+g_1+\ldots +g_k}(j_1+\ldots +j_k).
      \]
 \end{enumerate}
\end{defn}

In particular, $1 \in \op_0 (1)$ and the maps 
\[
  \sigma_i\coloneqq \gamma(-; 1^i, \ast, 1^{n-i-1}) \colon \op(n) \to \op(n-1)
\] 
restrict to  maps $\sigma_i\colon \op_g(n) \to \op_g(n-1)$.

We will mainly be concerned with $\mathbb N$-graded operads. 
Note that any ordinary operad is an $I$-graded operad concentrated in  degree 0.  Furthermore, 
if $J$ is a submonoid of $I$ then the graded pieces of $\op$ corresponding to $J$  form a suboperad of $\op$.

%\vskip .2in
Let $\op$ be an $I$-graded operad and assume there is   a map of graded operads 
$\mu \colon \cA\to \op$ for some $A_\infty$-operad $\cA$.
 We refer to $\mu$ as a {\bf multiplication map.}  
Then every $\op$-\oalg{}   is an $A_\infty$-algebra and
we can apply the group completion functor defined in the previous section. 
Note  that the image of $\cA$ is contained 
in the degree 0 part of $\op$, and for most of the remainder of this paper we will assume that $\op$ 
satisfies the following {\bf weak homotopy commutativity condition}: 
\begin{equation}\label{htpycom}
  \mu (\cA (2)) \subset \op_0(2) \text { is contained in a path component.}
\end{equation}

\begin{rmk}\label{htpycom_detail}
  Condition \ref{htpycom} implies  there is a path between any two points in the image of  $\mu$.  Thus 
  the induced multiplication  on any $\op$-algebra 
  is homotopy commutative and  the group completion theorem, \autoref{thm:gpcompletion}, and its
  corollary apply.
\end{rmk}

%\vskip .2in
Let  $D\colon \op(n)\to \op(0)$ be defined by $D(-)=\gamma(-;\ast, \ldots,\ast)$. 
For each $n\in\mathbb{N}$, $g\in I$ and $\tilde s\in \op _s(1)$, the diagram 
\[\xymatrix{
 \op_{g}(n)\ar[r]^-{\tilde s}\ar[d]_{D}&\op_{g+s}(n)\ar[d]^{D}\\
 \op_{g}(0)\ar[r]^-{\tilde s} &\op_{g+s}(0)
}\]
commutes by the associativity of $\gamma$.
Here we abuse notation and use $\tilde s$ to denote the map $\gamma (\tilde  s; -)$. 

Now let $s$ be the product of a set of  generators for $I$ and choose $\tilde{s}\in \op_s(1)$.  
We will refer to both the element $\tilde{s}$ and the map $\gamma(\tilde{s},-)$ as 
the {\bf propagator}. Define 
\[
  \op_\infty(n) \coloneqq \hocolim_{\tilde s} \op_g(n).
\]
The commutative square above defines a map 
\[
  {D}_\infty \colon \op_\infty(n) \longrightarrow  \op_\infty (0).
\]
  Note that any other choice of $\tilde s$ in the same path component of $\op_s(1)$ will 
  produce a homotopic map, but a choice from a different component may not.

\begin{rmk}\label{rmk:propagator2}
  In later sections it will be convenient to view the map $\tilde{s}$ as 
the  multiplication with an element $\tilde{s}_0 \in\op_s(0)$.  For this purpose 
  we construct $\tilde{s}$ as follows:  
  let $\mu_0$ be an element in the path-connected image $\mu(\cA (2)) \subset \op_0(2)$ and  pick $\tilde s_0 
    \in \op_s(0)$.  Then define 
   \[ \tilde s \coloneqq  \gamma (\mu_0; \tilde s_0 , 1) \in \op_s(1). 
  \] 
  For any $\op$-algebra $X$,
  the map 
    $\theta(\tilde{s};-)\colon X\to X$ is homotopic to the multiplication 
  by the image of $\tilde{s}_0$ under the map $\theta\colon \op(0)\to X$. 
\end{rmk}

\begin{defn}\label{HS}
  Let $\op$ be an $I$-graded operad together with a map $\mu\colon \cA \to \op$ 
  that satisfies condition \ref{htpycom}.
  Then $\op$  is  an {\bf operad with homological stability (OHS)} if  the maps 
  \[
    D_\infty\colon \op_\infty (n) \longrightarrow \op_\infty (0)
  \]
  induce homology isomorphisms. 
  
For operads $\op$ and $\opt$ with homological stability,  
  a {\bf map of operads with homological stability} is a pair of graded operad maps $f\colon \op \to \opt$ 
  and $a\colon \cA \to \cA'$ such that the following diagram commutes:
  \[\xymatrix{
   \cA\ar[r]^-a\ar[d]_{\mu}&\cA'\ar[d]^{\mu '}\\
   \op\ar[r]_-f&\opt.
  }\]
\end{defn}

    For $I=\mathbb N$,  $D_\infty$  is a homology isomorphism if the maps \[D\colon \op_g (n) \to \op_g(0)\]
    are homology isomorphisms  in degrees $* < \phi (g)$  where $\phi$ goes to infinity as $g$ goes to infinity.
    For operads concentrated in degree 0,
    $\op_\infty (n) \simeq \op(n)$ and the condition is that $D\colon \op (n) \to \op (0)$ is a homology isomorphism. 

\begin{eg}
  $\cube _\infty$ is an OHS with 
  multiplication  $\mu\colon \cube_1 \to \cube_\infty$ defined by the inclusion.  
  Note that $\mu$ factors through $\cube _2$ and so satisfies condition \ref{htpycom}.   
  (In fact, it is enough to note that $\cube_{\infty}(2)$ is contractible.)
  As all levels $\cube _\infty (n)$ are contractible, the 
  homological stability condition is trivially satisfied.
\end{eg}

\begin{eg}
 The Riemann surface operad $\mathcal M$  receives a map from the genus zero operad $\mathcal P$ and 
  hence it satisfies condition \ref{htpycom}.
  
  By Harer's homological stability theorem, the homology of the mapping class groups 
  $H_*(\Gamma _{g, n+1})$ are independent of $g$ and $n$ as long as $*  $ is small relative to $g$ \cite {Harer}. Since $\cM _g (n) \simeq B\Gamma _{g, n+1}$, 
  the maps $D\colon \cM _g(n) \to \cM _g (0)$ are homology isomorphisms in degrees
  $* $ growing with $g$. 
  
  In particular, $\mathcal M$ is an operad with homological stability.
\end{eg}

Taking  the operadic product $\op \times \opt$ of operads $\op$ and $\opt$ is functorial 
in both factors and the projection maps allow us to consider $\op$-\oalgs{}  and $\opt$-\oalgs{} 
as $(\op \times \opt)$-\oalgs{}.  We can upgrade the product of operads to a product in the category of OHSs.

\vskip .2in
\begin{lem}\label{lem:prodohs}
  Let $\op$ and $\opt$  be two  OHSs. Then the product operad $ \op \times \opt$  
  is an OHS with compatible maps of OHSs
  \[
    \op \xleftarrow{\pi_1}  \op\times \opt \xto {\pi_2}   \opt
  \]
  where $\pi_1$ and $\pi_2$ are the projections to the first and  second factors, respectively. 
\end{lem}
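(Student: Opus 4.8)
\emph{Overview.} The plan is to endow $\op\times\opt$ with the product grading, exhibit the obvious multiplication map, verify the three conditions of \autoref{HS}, and then recognise the two projections (paired with the projections out of the multiplication operad) as maps of OHSs. First I would grade $\op\times\opt$ by the monoid $I\times I'$, which is again commutative, finitely generated (by the union of generating sets for $I$ and $I'$), and has identity element $(0,0)$; set $(\op\times\opt)_{(g,g')}(n)\coloneqq\op_g(n)\times\opt_{g'}(n)$. The three axioms of \autoref{gradedV} are then inherited factorwise: the basepoint $*\times*$ lies in degree $(0,0)$ because $*\in\op_0(0)$ and $*\in\opt_0(0)$; the diagonal $\Sigma_n$-action preserves each graded piece; and $\gamma\times\gamma'$ restricts to the graded pieces since $\gamma$ and $\gamma'$ do.

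\emph{Multiplication map.} For the multiplication map I would take $\mu''\colon\cA\times_{\As}\cA'\to\cA\times\cA'\xrightarrow{\mu\times\mu'}\op\times\opt$, where the first arrow is the canonical map recalled in \S\ref{sec:operads} and $\cA\times_{\As}\cA'$ is, as noted there, again an $A_\infty$-operad; its image lies in the degree-$(0,0)$ part, so $\mu''$ is a map of graded operads. Condition~\ref{htpycom} holds because $(\cA\times_{\As}\cA')(2)$ is a subspace of $\cA(2)\times\cA'(2)$, whence $\mu''$ carries it into $\mu(\cA(2))\times\mu'(\cA'(2))$; the latter, being the product of the path component of $\op_0(2)$ containing $\mu(\cA(2))$ with the path component of $\opt_0(2)$ containing $\mu'(\cA'(2))$, lies in a single path component of $\op_0(2)\times\opt_0(2)=(\op\times\opt)_{(0,0)}(2)$.

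\emph{Homological stability.} This is the substantive point. I would first observe that if $\tilde s\in\op_s(1)$ and $\tilde s'\in\opt_{s'}(1)$ are propagators for $\op$ and $\opt$, then $(s,s')$ is the product of a generating set for $I\times I'$, and $\tilde s\times\tilde s'\in\op_s(1)\times\opt_{s'}(1)=(\op\times\opt)_{(s,s')}(1)$ serves as a propagator for $\op\times\opt$. Since $\aU$ is cartesian closed, finite products preserve colimits in each variable, and as a mapping telescope is such a colimit, a cofinality argument along the diagonal $\mathbb N\hookrightarrow\mathbb N\times\mathbb N$ yields a homeomorphism
\[
  (\op\times\opt)_\infty(n)\;\cong\;\op_\infty(n)\times\opt_\infty(n)
\]
for each $n$, under which $D_\infty^{\op\times\opt}$ corresponds to $D_\infty^{\op}\times D_\infty^{\opt}$. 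It then suffices to know that a product of homology isomorphisms is a homology isomorphism: writing $D_\infty^{\op}\times D_\infty^{\opt}=(D_\infty^{\op}\times\id)\circ(\id\times D_\infty^{\opt})$, each factor is a homology isomorphism by the Künneth theorem over $\mathbb Z$ (then extended to arbitrary coefficients by universal coefficients), using that one of the two homology maps involved is an isomorphism. I expect this interchange of mapping telescopes with products, together with the verification that the propagator of the product may be chosen as a product of propagators, to be the only steps requiring genuine attention; everything else is formal.

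\emph{Maps of OHSs.} Finally, $\pi_1$ together with the projection $\cA\times_{\As}\cA'\to\cA$, and likewise $\pi_2$ together with $\cA\times_{\As}\cA'\to\cA'$, are maps of operads with homological stability: each pair consists of maps of graded operads, and the square of \autoref{HS} commutes in each case because $\mu''$ is, by construction, the restriction of $\mu\times\mu'$ to the pullback. The resulting two maps of OHSs are compatible in the evident sense, and this completes the verification.
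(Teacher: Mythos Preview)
Your proposal is correct and follows essentially the same approach as the paper: the same $I\times I'$-grading, the same multiplication map $\cA\times_{\As}\cA'\to\op\times\opt$, the same product propagator, and the same identification of $(\op\times\opt)_\infty(n)$ with $\op_\infty(n)\times\opt_\infty(n)$. If anything, you are more careful than the paper, which asserts the hocolim-product interchange and the fact that a product of homology isomorphisms is a homology isomorphism without comment; your cofinality and K\"unneth remarks fill exactly those gaps, and your explicit check that the projections (paired with the projections $\cA\times_{\As}\cA'\to\cA,\cA'$) are maps of OHSs is something the paper leaves implicit.
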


\begin{proof}
  If $\op$ is $I$-graded and $\opt$ is $I'$-graded, the product $\op\times \opt$ is $I\times I'$-graded  
  with graded pieces
  \[
    (\op \times \opt)_{(g, g')}(n) = \op_g (n) \times \opt_{g'} (n).
  \]
  The symmetric group $\Sigma_n$ acts diagonally.

  Let $\mu \colon \cA \to \op$ and $\mu'\colon \cA' \to \opt$ be the multiplications for $\op$ and 
  $\opt$. 
  Then $\cA\times_{\As}\cA'$  is an $A_\infty$-operad and the composite
  \[
    \tilde \mu \colon  \cA\times_{\As}\cA'  \longrightarrow \cA \times \cA' \xto {\mu \times \mu'}
    \op \times \opt
  \]
  defines a multiplication map for $\op \times \opt$.
  Since $\op$ and $\opt$ satisfy condition \ref{htpycom}, $\op \times \opt$ also satisfies this condition.

  Suppose $I$ is generated by $s_1,\ldots, s_n$ and $I'$ is generated by $s_1',\ldots, s_m'$.  Let $\tilde s=s_1 \times \ldots \times s_n$ and $\tilde s'=s_1' \times \ldots \times s_m'$, then $(\tilde s,\tilde s')$ is a generator for $I\times I'.$
  Thus 
  $\hocolim_{(\tilde s,\tilde s')} (\op \times \opt)(n) \simeq \hocolim _{\tilde s} \op (n)
  \times \hocolim _{\tilde s'} \opt(n)$ 
  and  we have 
  \[
    D_\infty \times D'_\infty \colon 
    \op_\infty (n) \times \opt_\infty (n) \longrightarrow
    \op_\infty (0)\times \opt_\infty (0)
  \]
  is a homology isomorphism.  Then $\op \times \opt$ is an OHS.
\end{proof}

In the next sections we will focus on operads  $\op$ with homological stability  that admit a map of OHSs  
\[
  \pi \colon \op \to \cube_\infty.
  \footnote{ $\cube_\infty$ is a convenient  $E _\infty$-operad, but any other $E_\infty$-operad could be used.}
\]
Any element  $\tilde c \in \cube_\infty(1)$ is a propagator for $\cube_\infty$, and we have that
\begin{equation}\label{trivial} 
  \hocolim_{\tilde c} \cube_\infty (n) \simeq \cube_\infty (n). 
\end{equation} 
In particular if $\tilde s$ is the propagator for the OHS $\op,$ then $\pi(\tilde s)$ is a 
propagator for $\cube_\infty.$  To simplify notation and in light of \eqref{trivial} we write $\cube_\infty(n)$ 
for $\hocolim_{\pi(\tilde s)} \cube_\infty (n)$ and we have maps $\pi_\infty \colon  \op_\infty (n) \to  \cube_\infty (n)$.

The following result allows us to replace $\op$-algebras with $\widetilde{\op}$-algebras, where $\widetilde \op$ is an operad with an OHS map to $\cube_\infty$.

\begin{cor}\label{cor:productohs}
  Let $\op$  be an  OHS. Then the product operad $\widetilde \op \coloneqq \op \times \cube _\infty$  
  is an OHS with compatible maps of operads
  \[
    \op  \xleftarrow{\pi_1} \widetilde {\op} \xto \pi   \cube_\infty
  \]
  where $\pi_1$ and $\pi$ are the projections to the first and  second factors respectively. 

  Indeed, $\op \mapsto \widetilde \op$ defines a functor from the category of OHSs to the category of OHSs over  $\cube_\infty$.
\end{cor}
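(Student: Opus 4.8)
The plan is to deduce \autoref{cor:productohs} as a special case of \autoref{lem:prodohs}, together with a check that $\cube_\infty$ is itself an OHS and that the construction $\op \mapsto \op \times \cube_\infty$ is functorial in the sense required. First I would recall from the preceding example that $\cube_\infty$ is an OHS: it is concentrated in degree $0$ (graded by the trivial monoid), its multiplication is the inclusion $\cube_1 \to \cube_\infty$ which factors through $\cube_2$ and hence satisfies condition \ref{htpycom}, and since every $\cube_\infty(n)$ is contractible the map $D\colon \cube_\infty(n) \to \cube_\infty(0)$ is trivially a homology isomorphism, so $D_\infty$ is as well. Applying \autoref{lem:prodohs} with $\opt = \cube_\infty$ then immediately gives that $\widetilde\op = \op \times \cube_\infty$ is an OHS with the asserted compatible projection maps of OHSs $\pi_1\colon \widetilde\op \to \op$ and $\pi\colon \widetilde\op \to \cube_\infty$; note that the grading monoid of $\widetilde\op$ is $I \times \{0\} \cong I$, and the propagator is $(\tilde s, \tilde c)$ for any choice $\tilde c \in \cube_\infty(1)$, which by \eqref{trivial} maps to a propagator for $\cube_\infty$.

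Next I would verify that $\pi\colon \widetilde\op \to \cube_\infty$ is in fact a map of \emph{OHSs} in the sense of \autoref{HS}, not merely a map of operads: this requires exhibiting the operad map $a$ on $A_\infty$-operads together with a commuting square relating the multiplication maps. Here $a$ is the projection $\cA \times_{\As} \cA' \to \cA'$ (where $\cA'$ is the $A_\infty$-operad for $\cube_\infty$, e.g.\ $\cube_1$), and the square commutes by the very definition of the multiplication $\tilde\mu$ for a product of OHSs as $\cA \times_{\As} \cA' \to \cA \times \cA' \xrightarrow{\mu \times \mu'} \op \times \opt$ given in the proof of \autoref{lem:prodohs}. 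The analogous statement for $\pi_1$ is symmetric.

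For the functoriality claim, I would argue as follows. Given a map of OHSs $(f, a)\colon \op \to \opt$, one takes $\widetilde f \coloneqq f \times \id_{\cube_\infty}\colon \widetilde\op \to \widetilde\opt$ on underlying graded operads, paired with $a \times_{\As} \id\colon \cA \times_{\As} \cube_1 \to \cA' \times_{\As} \cube_1$ on the $A_\infty$-level. One checks that this is a map of OHSs — the required square of multiplication maps commutes because $\tilde\mu$ is natural in each factor and $\id_{\cube_\infty}$ is a map of OHSs in the trivial way — and that the projection $\pi\colon \widetilde\op \to \cube_\infty$ is natural in $\op$, i.e.\ $\pi_{\opt} \circ \widetilde f = \pi_\op$, which is immediate since $\widetilde f$ is the identity on the $\cube_\infty$ factor. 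Identities and composites are clearly preserved, so we obtain a functor from OHSs to OHSs-over-$\cube_\infty$ (the slice category of OHSs with terminal-side object $\cube_\infty$, morphisms being maps of OHSs commuting with the structure maps to $\cube_\infty$).

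I do not expect any serious obstacle here; the result is essentially a packaging of \autoref{lem:prodohs}. The only point requiring a little care is bookkeeping around the $A_\infty$-operad factors: one must consistently use $\cA \times_{\As} \cA'$ rather than $\cA \times \cA'$ so that the pullback over $\As$ behaves well under the maps $a \times_{\As} \id$, and one must confirm that these refined products inherit associativity and unitality of the structure maps as sub-operads of the ordinary products, which is exactly what was already noted when $\cA \times_{\As} \cA'$ was introduced. Everything else is formal.
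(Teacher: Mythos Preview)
Your proposal is correct and follows the same approach as the paper, which simply notes that the result follows immediately from \autoref{lem:prodohs} since $\cube_\infty$ is an OHS. You have supplied considerably more detail than the paper does, particularly on the functoriality claim and the bookkeeping with the $A_\infty$-factors, but this is all accurate and in the spirit of what the paper leaves implicit.
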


\begin{proof}
  This follows immediately  from \autoref{lem:prodohs} since $\cube _\infty$ is an OHS. 
\end{proof}

Thus any $\op$-\oalg{} is an $\widetilde \op$-\oalg{}, and the statements and properties we prove for $\widetilde \op$-\oalgs{} will equally apply to $\op$-\oalgs{}. Likewise, any $\cube_\infty$-\oalg{} is a $\widetilde \op$-\oalg{}, and for each $n \geq 0$, the symmetric group $\Sigma_n$ acts freely on $\widetilde \op (n)$.

\section{The group completion of free \oalgs{} over an OHS}\label{sec:freealgohs}

The goal of this section is to describe the group completion of a free
$\op$-\oalg{} for an OHS with a compatible map $\pi\colon \op \to \cube_\infty$. (Recall from the previous section that there is no loss of generality in assuming we have the map $\pi$.) 
We will show that, up to homotopy, the group completion of the free  $\op$-\oalg{} on a based space $X$ is a product of the group completion of $\op(0)$ and  the free infinite loop space on $X$.
This will be used in the following section where we replace $X$ by its free resolution as an $\op$-algebra.
First however, we will prove \autoref{productstability} which uses the homological stability condition in the definition of an OHS in an essential way. 

For each $n$ and $\Sigma_n$-space $Y$, we have $\Sigma_n$-equivariant maps
 \[ 
   {D}_\infty \times \ast\colon \op_\infty(n) \times Y \to \op_\infty (0) \times \ast
 \] 
inducing  maps $\bar {D}_\infty\colon \op_\infty (n)\times_{\Sigma_n} Y\to \op_\infty(0)$.
Taking the product with  the compatible map of operads $\pi \colon \op\to  \cube_\infty$  gives a  map
\begin{equation}\label{splitmaps}
  \bar {D}_\infty \times (\pi_\infty \times 1_Y) \colon \op_\infty (n)\times_{\Sigma_n} Y
  \to \op_\infty (0) \times (\cube_\infty(n) \times_{\Sigma_n}Y).
\end{equation}

\begin{lem}
  \label{productstability}
  Suppose $\op$ is an OHS and there is a compatible map  $\pi\colon \op \to \cube_{\infty}$. Then 
  the induced map 
  \[
   H_*(\op_{\infty}(n)\times_{\Sigma_n} Y)\to 
   H_*(\op_\infty(0) \times (\cube_\infty(n) \times _{\Sigma_n}Y))
  \] 
  is an isomorphism for all $\Sigma_n$-spaces $Y$.
\end{lem}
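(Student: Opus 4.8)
The plan is to reduce the statement to the defining homology isomorphism $D_\infty\colon \op_\infty(n)\to\op_\infty(0)$ of an OHS by a Serre-spectral-sequence (or, equivalently, a Borel-construction) argument, using the fact that $\pi$ being a map of operads makes $D_\infty$ equivariant over the contractible spaces $\cube_\infty(n)$. First I would observe that the map \eqref{splitmaps} is the composite of $\bar D_\infty$ with the identity in the first coordinate and of $\pi_\infty\times 1_Y$ passed to the $\Sigma_n$-quotient in the second; since $\Sigma_n$ acts freely on $\cube_\infty(n)$, the space $\cube_\infty(n)\times_{\Sigma_n}Y$ is a genuine (homotopy) orbit, and likewise $\op_\infty(n)\times_{\Sigma_n}Y$ is the Borel construction $E\Sigma_n\times_{\Sigma_n}(\op_\infty(n)\times Y)$ up to homotopy because $\op_\infty(n)$ carries a free $\Sigma_n$-action once we build $\widetilde\op$ (as in Corollary~\ref{cor:productohs}, $\Sigma_n$ acts freely on $\widetilde\op(n)=\op(n)\times\cube_\infty(n)$, and replacing $\op$ by $\widetilde\op$ loses no generality). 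Thus both sides are homotopy orbit spaces of $\Sigma_n$, and the map in question is $\Sigma_n$-homotopy-orbit of the $\Sigma_n$-equivariant map $D_\infty\times(\pi_\infty\times 1)\colon\op_\infty(n)\times Y\to\op_\infty(0)\times(\cube_\infty(n)\times Y)$.

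Next I would run the homotopy-orbit (Borel) spectral sequences for both sides, i.e. the Serre spectral sequences of the fibrations with fiber $\op_\infty(n)\times Y$ (resp.\ $\op_\infty(0)\times\cube_\infty(n)\times Y$) over $B\Sigma_n$. The map \eqref{splitmaps} induces a map of spectral sequences, so it suffices to show it is an isomorphism on $E^2$-pages, i.e.\ on $H_*(B\Sigma_n;H_*(\text{fiber}))$ with the appropriate local coefficients. Because $\cube_\infty(n)$ is contractible, $H_*(\op_\infty(0)\times\cube_\infty(n)\times Y)\cong H_*(\op_\infty(0))\otimes H_*(Y)$ (over a field, or with the usual Künneth/flatness caveats — I would just state coefficients in a field $k$, or note the argument with integral coefficients using that $H_*(\cube_\infty(n))=H_*(\mathrm{pt})$ makes the Künneth map an isomorphism regardless), and on fibers the map is $(D_\infty)_*\otimes\mathrm{id}_{H_*(Y)}$, which is an isomorphism by the OHS hypothesis (Definition~\ref{HS}). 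The one genuine point to check is that the $\Sigma_n$-actions match up: the local coefficient systems on $B\Sigma_n$ are $H_*(\op_\infty(n)\times Y)$ and $H_*(\op_\infty(0))\otimes H_*(Y)$, and I must verify that $(D_\infty)_*\otimes\mathrm{id}$ is a map of $\Sigma_n$-modules. This follows because $D_\infty$ is $\Sigma_n$-equivariant (the operad structure maps $\gamma(-;*,\dots,*)$ commute with the $\Sigma_n$-action by the equivariance axioms for an operad) and $\pi$ is a $\Sigma_n$-equivariant operad map, so $\pi_\infty$ is equivariant, and $\Sigma_n$ acts on $H_*(\cube_\infty(n))=k$ trivially since that space is contractible.

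With the $E^2$-isomorphism in hand, the comparison theorem for spectral sequences gives the isomorphism on abutments, which is exactly the claimed isomorphism $H_*(\op_\infty(n)\times_{\Sigma_n}Y)\cong H_*(\op_\infty(0)\times(\cube_\infty(n)\times_{\Sigma_n}Y))$. I expect the main obstacle to be purely bookkeeping rather than conceptual: namely, being careful that the homotopy colimit $\op_\infty(n)=\hocolim_{\tilde s}\op_g(n)$ interacts well with the Borel construction (one needs $\hocolim$ to commute with $(-)\times_{\Sigma_n}Y$ and with taking homology, which is fine since homotopy colimits commute with each other and homology commutes with filtered/sequential hocolims), and that the chosen propagator $\tilde s$ can be taken $\Sigma_1$-equivariantly — trivial since $n=1$ there — so that $\op_\infty(n)$ really does inherit the $\Sigma_n$-action. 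A cleaner alternative I might use instead of spectral sequences: since $D_\infty$ is an equivariant homology isomorphism between free $\Sigma_n$-spaces (after passing to $\widetilde\op$), $E\Sigma_n\times D_\infty$ is a non-equivariant homology isomorphism, hence descends to a homology isomorphism on $\Sigma_n$-orbits; then smash/product with $Y$ equivariantly and pass to orbits, using contractibility of $\cube_\infty(n)$ to identify the target. Either way the crux is the equivariance of $D_\infty$ together with the contractibility of $\cube_\infty(n)$, and the homological stability hypothesis enters exactly once, as the statement that $(D_\infty)_*$ is an isomorphism.
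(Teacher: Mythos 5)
Your argument is correct, but you fiber over a different base than the paper does. You view $\op_\infty(n)\times_{\Sigma_n}Y$ as a Borel construction over $B\Sigma_n$ (legitimately, since the $\Sigma_n$-action on $\op_\infty(n)$ is already free thanks to the equivariant map $\pi_\infty$ into the free $\Sigma_n$-space $\cube_\infty(n)$ --- so no passage to $\widetilde\op$ is actually needed, as the lemma already supplies $\pi$) and compare the two Borel spectral sequences, reducing to the non-equivariant homology isomorphism of fibers $\op_\infty(n)\times Y\to\op_\infty(0)\times\cube_\infty(n)\times Y$. That fiber comparison is where your K\"unneth caveat lives; it does work (e.g.\ run the Serre spectral sequence of the projection to $Y$ and use that $(D_\infty,\pi_\infty)$ is a homology isomorphism because $\cube_\infty(n)\simeq\ast$), but it is an extra step. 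The paper instead fibers over $\cube_\infty(n)\times_{\Sigma_n}Y$: the sequence $\op_\infty(n)\to\op_\infty(n)\times_{\Sigma_n}Y\to\cube_\infty(n)\times_{\Sigma_n}Y$ gives a long exact sequence in homotopy (since $\pi_\infty\times 1$ is an equivariant map of free $\Sigma_n$-spaces), and after replacing the middle term by the weakly equivalent $(\op_\infty(n)\times\cube_\infty(n))\times_{\Sigma_n}Y$ one obtains an honest fibration compared against the trivial fibration $\op_\infty(0)\to\op_\infty(0)\times(\cube_\infty(n)\times_{\Sigma_n}Y)\to\cube_\infty(n)\times_{\Sigma_n}Y$ over the same base. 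There the fiber map is simply $D_\infty$, so the $E_2$-page comparison is exactly the OHS hypothesis, with no K\"unneth step and no need to invoke $B\Sigma_n$ or the Borel construction at all. Both routes are valid and use the same two inputs (equivariance of $\pi_\infty$, and $D_\infty$ being a homology isomorphism); the paper's choice of base simply keeps $Y$ out of the fiber, which makes the argument slightly leaner.
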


\begin{proof} 
  The space $\cube_\infty(n)$ is contractible,  and so the maps 
  \begin{equation}\label{product_cubes}
    \op_{\infty}(n)\xto{1\times \ast_Y} \op_{\infty}(n)\times Y
    \xto{\pi_\infty\times 1_Y} \cube_{\infty}(n)\times Y
  \end{equation} 
  induce a long exact sequence on homotopy groups.  
  Since $\pi_\infty$ is $\Sigma_n$-equivariant and the actions of $\Sigma_n$ on $\op_{\infty}(n)$ 
  and $\cube_{\infty}(n)$ are free, the long exact sequence in homotopy groups for  \eqref{product_cubes} 
  implies the maps 
  \[
   \op_{\infty}(n)\xto{1\times \ast_Y} \op_{\infty}(n)\times_{\Sigma_n}Y
   \xto{\pi_\infty\times 1_Y} \cube_{\infty}(n) \times_{\Sigma_n}Y
  \]
  induce a long exact sequence on homotopy groups.  

  Consider the following diagram where the second and third rows are 
  fibrations and the first row induces a long exact sequence on homotopy groups.  
  \[\xymatrix@C=35pt{
    \op_{\infty}(n)\ar[r]^-{1\times \ast_Y}
    &\op_{\infty}(n)\times_{\Sigma_n}Y\ar[r]^{\pi_\infty\times 1_Y}\ar[d]^{(1\times \pi)\times 1}
    &\cube_{\infty}(n)\times_{\Sigma_n}Y\ar@{=}[d]
    \\
    \op_{\infty}(n)\ar[r]^-{1\times \ast_Y}\ar[d]_{{D}_\infty}\ar@{=}[u]
    &(\op_{\infty}(n)\times \cube_\infty(n))\times_{\Sigma_n}Y
    \ar[r]^-{\proj_2\times 1_Y}\ar[d]^{\bar {D}_\infty\times   1\times 1_Y}
    &\cube_{\infty}(n)\times_{\Sigma_n}Y\ar@{=}[d]
    \\
    \op_\infty(0)\ar[r]^-{1\times \ast}
    &\op_\infty(0)\times(\cube_{\infty}(n)\times_{\Sigma_n}Y)\ar[r]^-{\proj_2}
    &\cube_{\infty}(n)\times_{\Sigma_n}Y
  }\]
  The top left square commutes up to homotopy and the remaining 
  small squares all commute strictly.  
  
  As the $\Sigma_n$ action on $\op_\infty(n)$ is free, the map 
  \[ 
   \op_\infty(n)\times _{\Sigma_n}Y\to (\op_\infty(n)
   \times \cube_\infty(n))\times _{\Sigma_n}Y
  \]
  is a weak homotopy equivalence.  The map between the bottom
  two rows defines a map of fibrations and hence  a map of Serre spectral sequences.  
  On the $E_2$-page, the map
  \[
   H_p(\cube_{\infty}(n)\times_{\Sigma_n}Y;H_q(\op_{\infty}( n)))
   \to H_p(\cube_{\infty}(n)\times_{\Sigma_n}Y;H_q(\op_{\infty}(0)))
  \]
  is an isomorphism since ${D}_\infty$ induces an isomorphism in homology by the assumption that $\op$ is an OHS.  
  This induces an isomorphism on the $E_\infty$-page and hence the desired isomorphism between the homology of the total spaces \cite[3.26]{weibel}.
\end{proof}

As we assume we have a compatible map of operads $\pi \colon \op \to \cube_\infty$, 
any $\cube_\infty$-algebra is an $\op$-algebra. For any space $X,$ there is a map  of $\op$-algebras
\[
 \tau \times \pi\colon  \fa{\mop}{X} \to \fa{\mop}\ast \times\fa{\mC_{\infty}}X ,
\]
where the map $\tau$ is induced by the unique map $X \to \ast$ and  the diagonal action of $\op$ provides 
the $\op$-algebra structure on the product $\fa\mop\ast\times \fa{\mC_\infty}X$.
Being maps of $\op$-algebras,  $\tau$ and $\pi$ are maps of $\cA$-\oalgs{}, 
so they induce a map on group completions
 \[ 
  \g(\tau)\times \g(\pi)\colon \g \fa\mop{X} \to  \g(\fa\mop\ast)\times \g (\fa{\mC_{\infty}}X). 
 \] 
We can now state the main result of this section.  

\begin{thm}
  \label{split}
  Suppose  $\op$ is an OHS with a compatible operad map  $\pi\colon \op \to \cube_{\infty}$.
  For  any based space $X$,  
  \[
   \g(\tau) \times \g( \pi)\colon \g( \fa\mop{X})\to  \g(\fa\mop\ast)\times  \g(\fa{\mC_{\infty}}X)
  \] 
  is a weak homotopy equivalence.
\end{thm}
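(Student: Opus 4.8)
The plan is to deduce the statement from the group completion theorem together with \autoref{productstability}, which is where the homological stability of $\op$ enters. Since the group completion of an $\cA$-algebra is by construction $\Omega$ of a space, both the source and the target of $\g(\tau)\times\g(\pi)$ are loop spaces, hence grouplike $H$-spaces; in particular they are simple spaces, and all of their path components are mutually homotopy equivalent. It therefore suffices to show that $\g(\tau)\times\g(\pi)$ is an isomorphism on integral homology: an isomorphism on $H_1$ then gives an isomorphism of (abelian) fundamental groups, and comparing the Serre spectral sequences of the universal-cover fibrations — the fundamental groups acting trivially on the homology of the universal covers, by simplicity — shows that the induced map of simply connected covers is a homology isomorphism, hence a weak equivalence, and so is the original map.

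\emph{Setting up the telescopes.} By the weak homotopy commutativity hypothesis (\autoref{htpycom_detail}), the group completion theorem \autoref{thm:gpcompletion} applies to the rectifications of $\fa\mop{X}$ and of $\fa\mop{\ast}\times\fa{\mC_\infty}{X}$, so it is enough to show that $\tau\times\pi$ induces an isomorphism after inverting $\pi_0$ in the respective Pontryagin rings. Using the identification $\fa\mop{\ast}=\op(0)$, the map $\tau\times\pi$ carries the $n$-th stratum $\op(n)\times_{\Sigma_n}X^n$ of the standard monad filtration of $\fa\mop{X}$ into $\op(0)\times\bigl(\cube_\infty(n)\times_{\Sigma_n}X^n\bigr)\subseteq\op(0)\times\fa{\mC_\infty}{X}$ by $[c;x_1,\dots,x_n]\mapsto\bigl(D(c),[\pi(c);x_1,\dots,x_n]\bigr)$ — exactly the map of \eqref{splitmaps} before one passes to $\op_\infty$. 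Now multiplication by the propagator $\tilde s$ is arity-preserving on $\fa\mop{X}$ (it is $\gamma(\tilde s;-)$ on each $\op(n)$, raising the grading) and arity-preserving on $\op(0)\times\fa{\mC_\infty}{X}$ (it is $\gamma(\tilde s;-)$ on $\op(0)$ and, because $\cube_\infty(1)$ is contractible, homotopic to the identity on $\fa{\mC_\infty}{X}$), and it is compatible with the basepoint relations. Hence, inverting $\tilde s$ (equivalently, passing to mapping telescopes) replaces $\fa\mop{X}$ by $\coprod_n \op_\infty(n)\times_{\Sigma_n}X^n/{\sim}$ and $\op(0)\times\fa{\mC_\infty}{X}$ by $\op_\infty(0)\times\fa{\mC_\infty}{X}$, and on the $n$-th filtration stratum the localized map becomes precisely $\bar{D}_\infty\times(\pi_\infty\times 1_{X^n})$.

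\emph{Conclusion.} Applying \autoref{productstability} to the $\Sigma_n$-space $X^n$ and to its fat-wedge subspace (the tuples with some coordinate at the basepoint), and then to the resulting pair, shows that the map induced on the $n$-th filtration quotients is a homology isomorphism for every $n\ge 1$; for $n=0$ it is the identity of $\op_\infty(0)_+$. A routine induction up the filtration (whose inclusions are cofibrations, $X$ being well pointed), followed by passage to the colimit, then shows that the $\tilde s$-localized map is a homology isomorphism. Finally, localization is exact, and since the image of $\pi_0(\fa\mop{X})$ under $\tau\times\pi$ generates $\pi_0\op(0)\times\pi_0(\fa{\mC_\infty}{X})=\pi_0(\fa\mop{\ast}\times\fa{\mC_\infty}{X})$, localizing this last isomorphism further at the remaining $\pi_0$-classes yields the isomorphism $H_*\bigl(\g(\fa\mop{X})\bigr)\cong H_*\bigl(\g(\fa\mop{\ast}\times\fa{\mC_\infty}{X})\bigr)$; composing with the homotopy equivalence $\g(\fa\mop{\ast}\times\fa{\mC_\infty}{X})\simeq\g(\fa\mop{\ast})\times\g(\fa{\mC_\infty}{X})$ from \autoref{gcthrm}\,(\ref{gc:product}) completes the argument.

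\emph{Main obstacle.} The delicate point is the bookkeeping in the last two paragraphs: checking that inverting the propagator really does turn $\tau\times\pi$, stratum by stratum, into the map of \autoref{productstability} (so that the homological stability assumption can be brought to bear), that the filtrations and basepoint relations are respected throughout, and that the associated-graded comparison propagates to the whole filtered object; and handling the fact that $\pi_0(\fa\mop{X})$ need not be finitely generated, which is why the localization is carried out in two stages (the propagator first, then the rest) rather than via a single mapping telescope.
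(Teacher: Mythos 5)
Your proposal follows essentially the same route as the paper: decompose $\fa\mop{X}$ by arity, pass to the $\tilde s$-telescope to reduce the comparison to the map $\bar{D}_\infty\times(\pi_\infty\times 1)$ of \autoref{productstability}, argue stratum by stratum that the filtration quotients are compared by a homology isomorphism, assemble up the filtration, and finish with the group completion theorem and a Whitehead argument for the simple spaces $\g(\fa\mop{X})$ and $\g(\fa\mop{\ast})\times\g(\fa{\mC_\infty}{X})$. The only cosmetic difference is in how the filtration subquotient $\op_\infty(n)\ltimes_{\Sigma_n}X^{\wedge n}$ is compared: the paper takes $Y=X^{\wedge n}$ and the cofiber sequence $\op_\infty(n)/\Sigma_n\to\op_\infty(n)\times_{\Sigma_n}Y\to\op_\infty(n)\ltimes_{\Sigma_n}Y$ together with the five lemma, whereas you compare the pair $(X^n,\text{fat wedge})$; these yield the same conclusion. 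You are also somewhat more explicit than the paper about the two-stage localization (first at $s$, then at the rest of $\pi_0$) needed to invoke \autoref{thm:gpcompletion}, which is a step the paper leaves implicit but is indeed required; your version is correct.
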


\begin{proof}
  For any based space $X$, the grading on $\op$ 
  defines a decomposition of  $\fa\mop{X}$ as $\coprod_{g \in I} \fa{\mop_g}{X}$ where 
  \[ 
   \fa{\mop_g}X \coloneqq  \left(\coprod_n \op_{g}(n)\times_{\Sigma_n} X^n\right)/\sim  
  \]
  and the equivalence relation is as in the definition of the free $\op$-\oalg{}. 
  
  Similarly, we have a natural decomposition of $\fa{\mop}\ast \times \mC _\infty (X)$
  with graded pieces 
  \[
   \fa{\mop _g} \ast \times \mC_\infty (X).
  \]
  Left multiplication by the propagator defines maps 
  \[
   \tilde s  \colon \fa{\mop_g}X \to \fa{\mop_{g+s}}X \quad  \text { and } \quad 
   \tilde s \times 1\colon \fa{\mop_g} \ast \times \mC _\infty (X) \to \fa{\mop_{g+ s}} \ast \times \mC_\infty (X) 
  \] 
  Let 
  \[
   \fa{\mop_\infty}X \quad \text{ and } \quad \fa{\mop _\infty } \ast \times \mC _\infty (X)
  \] 
  denote the homotopy colimits over these maps. 
  Then, under the homeomorphism  $\fa{\mop_g}\ast \cong \op_g(0)$,
  \autoref{rmk:propagator2} shows $\fa{\mop_\infty}\ast \cong \op_\infty (0)$.
  
  It will be enough to show that the  map
  \begin{equation}\label{limitmap}
   \tau_\infty \times \pi_\infty \colon \fa{\mop_\infty}X \longrightarrow\fa{\mop _\infty } \ast \times \mC _\infty (X)
  \end{equation}
  is a homology isomorphism. Since, if we let $s$ denote the component of $\tilde s$, 
   it will follow that
  \[
   \tau \times \pi\colon H_* (\fa{\mop}X) [s^{-1}] \longrightarrow H_*(\fa{\mop } \ast \times \mC _\infty (X))[s^{-1}]
  \]
  is an isomorphism.  By the group completion theorem, 
  \[
   \g(\tau) \times \g( \pi)\colon \g( \fa\mop{X})\to  \g(\fa\mop\ast)\times  \g(\fa{\mC_{\infty}}X)
  \] 
  induces an isomorphism in homology. As both source and target are simple spaces, 
  the statement of the theorem will then follow by the Whitehead theorem.

  We have filtrations 
  \[ 
   F_n =\left(\coprod _{k\leq n} \op_{\infty}( k)\times _{\Sigma_k} X^k\right)/\sim
  \]
  of $\fa{\mop_{\infty}}X$ and 
  \[
   \tilde{F}_n =\left( \coprod_{k\leq n} \op_{\infty} (0)
   \times \cube_{\infty}(k) \times_{\Sigma_k}X^k \right)/\sim
  \]
  of $\fa{\mop_\infty}\ast\times \fa{\mC_{\infty}}X$.  
  The associated subquotients are 
  \[
   F_n/F_{n-1}=\op_{\infty}(n)\ltimes_{\Sigma_n}X^{\wedge n}
   \text{  and  }
   \tilde{F}_n/\tilde{F}_{n-1}= \op_{\infty}(0)\ltimes \cube_{\infty}(n) 
   \ltimes_{\Sigma_n}X^{\wedge n}
  \]
   where $\ltimes$ is the {\bf half smash product} defined by 
   $Y\ltimes Z\coloneqq (Y\times Z)/(Y\times \ast)$. 
  The map in \eqref{limitmap} is compatible with these filtrations, 
  and so there are induced maps 
  \begin{equation}\label{subquotients}
   F_n/F_{n-1}\to \tilde{F}_n/\tilde{F}_{n-1}.
  \end{equation} 
  There is a commutative diagram 
  \[\xymatrix{
    \op_{\infty}(n)\ar[r]\ar[d]_{\tilde{D}\times \pi}
    &\op_{\infty}(n)\times Y\ar[d]^{(\tilde{D}\times \pi)\times 1_Y}\\
    \op_\infty(0)\times \cube_{\infty}(n)\ar[r]&\op_\infty(0)\times(\cube_{\infty}(n)\times Y)
  }\]
  where the maps to $Y$ are inclusion of the base point.  
  Taking the quotient by the $\Sigma_n$ action (and observing that $\Sigma_n$ acts trivially on $\op_\infty(0)$),
  we have a map of cofiber sequences
  \[\xymatrix@C=12pt{
   \op_{\infty}(n)/\Sigma_n\ar[r]\ar[d]
   &\op_{\infty}(n)\times_{\Sigma_n}Y\ar[r]\ar[d]
   &\op_{\infty}(n)\ltimes_{\Sigma_n}Y\ar[d]\\
   \op_\infty(0)\times (\cube_{\infty}(n)\times_{\Sigma_n} \ast)\ar[r]
   &\op_\infty(0) \times(\cube_{\infty}(n)\times_{\Sigma_n}Y)\ar[r]
   &\op_\infty(0)\ltimes(\cube_{\infty}(n)\ltimes_{\Sigma_n}Y)
  .}\]
  The first two vertical maps are as in \eqref{splitmaps} and, 
  if we take $Y = X^{\wedge n},$ the last map is \eqref{subquotients}. 
  The long exact sequence in homology, \autoref{productstability}, 
  and the five lemma imply \eqref{subquotients} is an isomorphism in homology. 
  The filtrations above define a pair of spectral sequences 
  and \eqref{subquotients} defines a map between them.
  These spectral sequences converge to the homology of 
  $\fa{\mop_\infty}X$ and $\fa{\mop_\infty}\ast\times \fa{\mC_\infty}X$ and
  \eqref{limitmap} induces \eqref{subquotients}.  
  Since \eqref{subquotients} is an isomorphism in homology, 
  \cite[3.4]{mccleary} implies \eqref{limitmap} is an isomorphism in homology. 
\end{proof}

%%%%%%%%%%%%%%%%%%%%%%%% SECTION 6 %%%%%%%%%%%%%%%%%%%%%%%%%%%%%%%%%%%%%%%%%

\section{OHS as infinite loop space operad}\label{sec:infinite_loop}

In this section  we use \autoref{split} 
to show that, for any OHS $\op$, the group completion of an $\op$-algebra $X$ is an infinite loop space. The idea is to replace $X$ by its free resolution as an $\op$-algebra using the bar construction. In \autoref{product}, we show that,  after group completing each level, the (homotopy) quotient of $\g X$ by $\g \op (*)$ is an infinite loop space. To see that $\g X$ is also an infinite loop space, in \autoref{freereplacement} we consider  $\op(*) \times X$ with the diagonal $\op$-algebra structure and deduce that the (homotopy)
quotient of $\g (\op(*) \times X) \simeq \g (\op(*)) \times \g (X)$
by $\g (\op (*))$ is $\g (X)$ and hence an infinite loop space. Our main theorem, \autoref{loopsandaction}, summarizes these results.  

We start by proving the naturality of our construction. 

\begin{prop}
  \label{infiniteloop} 
  Suppose that $\op$ is an OHS with compatible operad map  $\pi\colon\op \to \cube_\infty$.
  The assignment $X\mapsto |\g B_\bullet(\mC_\infty, \mop, X)|$ defines 
  a functor from $\op$-\oalgs{} to $\Omega^\infty$-spaces.
\end{prop}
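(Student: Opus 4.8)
The plan is to verify that the assignment $X \mapsto |\g B_\bullet(\mC_\infty, \mop, X)|$ is (i) well defined as a functor landing in infinite loop spaces, and (ii) functorial in $X$. The underlying point is that $B_\bullet(\mC_\infty, \mop, X)$ is a simplicial $\mC_\infty$-algebra: by \autoref{bar_construction_properties}(4), applied to the monad map $\delta = \pi_* \colon \mop \to \mC_\infty$ coming from $\pi\colon \op \to \cube_\infty$, the monad $\mC_\infty$ is a $\mop$-functor and $B_\bullet(\mC_\infty, \mop, X)$ is a simplicial $\mC_\infty$-algebra, hence in particular a simplicial $\cube_\infty$-algebra. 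Since each $\cube_\infty$-algebra is an $A_\infty$-algebra (via $\cube_1 \to \cube_\infty$), we may apply the group completion functor $\g$ of \S\ref{group_complete} levelwise to obtain a simplicial space $\g B_\bullet(\mC_\infty, \mop, X)$.

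First I would observe that, levelwise, $\g B_q(\mC_\infty, \mop, X) = \g(\mC_\infty \mop^q X)$ is the group completion of a $\cube_\infty$-algebra, and since $\cube_\infty$ contains $\cube_2$, each such $\cube_\infty$-algebra is homotopy commutative and its group completion is an infinite loop space. More precisely, the free $\cube_\infty$-algebra functor $\mC_\infty$ takes values in $\cube_\infty$-algebras whose group completions are infinite loop spaces by the standard recognition principle of \cite{May} (the group completion of $\mC_n Y$ is $\Omega^n \Sigma^n Y$, and of $\mC_\infty Y$ is $\Omega^\infty \Sigma^\infty Y$), so each $\g B_q(\mC_\infty, \mop, X)$ is an infinite loop space functorially in $q$. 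Realizing the simplicial infinite loop space $\g B_\bullet(\mC_\infty, \mop, X)$ then gives an infinite loop space: realization of a simplicial object in $\Omega^\infty$-spaces is again an $\Omega^\infty$-space, because $\Omega^\infty$ and geometric realization can be interchanged (compare the argument in \autoref{gcthrm}(4), where $\g|M_\bullet| \simeq |\g M_\bullet|$ because each $BM_n$ is connected, so $\Omega$ commutes with realization; the same reasoning iterates through the delooping spectrum). Care is needed that the spaces involved are well pointed and the simplicial spaces are good, so that realization behaves correctly; this holds because the bar construction is built from free algebras on well-pointed spaces.

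For functoriality in $X$: a map $f\colon X \to Y$ of $\op$-algebras induces a map of simplicial $\mC_\infty$-algebras $B_\bullet(\mC_\infty, \mop, f)$ (the bar construction is functorial in the $\mop$-algebra argument), hence a map of simplicial infinite loop spaces after applying $\g$, hence a map of infinite loop spaces on realizations. Naturality of all the constructions — the monad map $\pi_*$, the bar construction, and $\g$ together with its natural transformation $g\colon \Id \to \g$ — assembles into the statement that $X \mapsto |\g B_\bullet(\mC_\infty, \mop, X)|$ is a functor to $\Omega^\infty$-spaces.

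The main obstacle I expect is purely the bookkeeping of commuting $\Omega^\infty$ (equivalently, the infinite delooping) past geometric realization at the simplicial level, while keeping track of point-set conditions (well-pointedness, goodness of the simplicial spaces, connectivity of the relevant deloopings so that $\Omega$ commutes with $|-|$). Everything else — that $B_\bullet(\mC_\infty, \mop, X)$ is a simplicial $\cube_\infty$-algebra, that group completions of $\cube_\infty$-algebras are infinite loop spaces, and functoriality — is formal given \autoref{bar_construction_properties}, \autoref{gcthrm}, and the recognition principle of \cite{May}.
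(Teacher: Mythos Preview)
Your approach is essentially the paper's, but you leave implicit the one step the paper works hardest on. You assert that ``each $\g B_q(\mC_\infty, \mop, X)$ is an infinite loop space functorially in $q$'' and then realize; the paper instead produces an explicit zig-zag of weak equivalences
\[
|\g B_\bullet(\mC_\infty, \mop, X)| \;\simeq\; \Omega^\infty |B_\bullet(\Sigma^\infty, \mop, X)|
\]
using the group-completion map of monads $\alpha\colon \mC_\infty \to \Omega^\infty\Sigma^\infty$ and the factorization $\Omega^\infty\Sigma^\infty = \Omega^\infty\circ\Sigma^\infty$, so that the target is manifestly $\Omega^\infty$ of a spectrum and functoriality in $X$ is immediate at the spectrum level.

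The place where your write-up is thin is precisely the ``functorially in $q$'' claim. The face map $d_0\colon \mC_\infty\mop^q X \to \mC_\infty\mop^{q-1}X$ is not of the form $\mC_\infty(f)$; it uses the $\mop$-functor structure $\mC_\infty\mop \to \mC_\infty$. So knowing that each level group-completes to an infinite loop space does not by itself give you a simplicial object in $\Omega^\infty$-spaces: you need that $\g$ sends \emph{all} $\mC_\infty$-algebra maps to infinite loop maps. That is true (it is May's recognition principle in its functorial form), but it is exactly what the paper's chain through $\Omega^\infty\Sigma^\infty$ is establishing in this concrete instance. If you want to keep your more abstract phrasing, you should cite the functorial recognition machine explicitly rather than only the identification $\g\mC_\infty Y \simeq \Omega^\infty\Sigma^\infty Y$; otherwise your argument, as written, has a gap at this step.
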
 

\begin{proof} There is a map of monads $\alpha\colon \mC_{\infty} \to \Omega^\infty \Sigma^\infty$
 \cite[5.2]{May} and, for every based space $Z$,  the map 
 $\alpha\colon \mC_{\infty}Z \to \Omega^\infty \Sigma^\infty Z$ is a group completion \cite[2.2]{May2}. 

  For a map $f\colon X\to Y$ of $\op$-\oalgs{}, we have a
  commutative diagram where all horizontal maps are induced by $f$.
  \[\xymatrix{
   |\g B_\bullet(\mC_\infty, \mop, X)|\ar[d]\ar[r]
   &|\g B_\bullet( \mC_\infty, \mop,  Y)| \ar[d]
   \\
   |\g B_\bullet(\Omega^\infty \Sigma ^\infty, \mop,  X)|\ar[r]
   &|\g B_\bullet( \Omega^\infty \Sigma ^\infty, \mop,  Y)|
   \\
      |\g \Omega^\infty B_\bullet(\Sigma ^\infty, \mop,  X)|\ar[r]\ar[u]
   &|\g  \Omega^\infty B_\bullet(\Sigma ^\infty, \mop,  Y)|\ar[u]
   \\
    |\Omega^\infty B_\bullet(\Sigma ^\infty, \mop, X)|\ar[d]\ar[u]\ar[r]
   &|\Omega^\infty B_\bullet( \Sigma ^\infty, \mop, Y)|\ar[d]\ar[u]
   \\
   \Omega ^\infty |B_\bullet(\Sigma ^\infty, \mop, X)|\ar[r]
   &\Omega ^\infty |B_\bullet(\Sigma ^\infty, \mop, Y)|
   } \]
  The first vertical maps are induced by  $\alpha \colon \mC_\infty \to \Omega^\infty \Sigma^\infty$ and 
  are levelwise weak equivalences since they are levelwise homology isomorphisms between simple spaces.
  The second maps are  equivalences by \autoref{bar_construction_properties}.  Since infinite loop spaces
  are grouplike, the third maps are equivalences by 
  \autoref{gcthrm}.   
  The fourth maps are equivalences by 
  %\cite[5.2]{Wald78}
  \cite[12.3]{May}. 

  Note that the last horizontal map in the diagram above is a map  of $\Omega^\infty$-spaces.
\end{proof}

The inclusion of the base point $\ast \to X$ induces a map of $\mop$-\malgs{} $\fa\mop\ast \to \fa{\mop}X$ 
and a simplicial map $\g \fa\mop\ast \to \g B_{\bullet}(\mop,\mop,X)$, where  $\g \fa\mop\ast$ is a constant simplicial space.

The operad maps $\cA \to  \op \xto{\pi} \cube_\infty$ induce  maps of monads 
$\mA \to \mop \xto{\pi} \mC_\infty$ and a map of simplicial $\mA$-\malgs{} 
$B_\bullet (\mop, \mop, X)\to B_\bullet (\mC_\infty , \mop, X).$  
Applying the functor $\g$ levelwise, we have a simplicial map 
\[
  \g (B(\pi,1,1)) \colon \g B_\bullet (\mop, \mop, X)
  \to \g B_\bullet (\mC_\infty , \mop, X).
\]

\begin{lem}
  \label{product} 
  Let $\op$ be an OHS with a compatible operad map  
  $\pi\colon \op \to \cube_{\infty}.$  For any  $\op$-\oalg{} $X$ 
   there  is a homotopy fibration sequence
  \[ 
    \g\fa\mop\ast \to |\g B_\bullet (\mop, \mop, X)|
    \xto{\g B(\pi, 1, 1)} |\g B_\bullet (\mC_\infty , \mop, X)|. 
  \]
\end{lem}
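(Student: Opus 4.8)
The plan is to prove the fibration sequence levelwise in the simplicial direction and then pass to realizations, using that all the simplicial spaces involved are ``good'' so that realization commutes with the relevant homotopy-theoretic constructions.

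First I would analyze the fibration at each simplicial level $q$. At level $q$ the map $\g B(\pi,1,1)$ is the map
\[
  \g\bigl(\mop\,\mop^{q}X\bigr)\xrightarrow{\g(\pi)}\g\bigl(\mC_\infty\,\mop^{q}X\bigr),
\]
i.e.\ the group completion of $\g(\tau)\times\g(\pi)$ applied to the free $\op$-algebra on the based space $Z\coloneqq \mop^{q}X$ (here one uses that $B_q(\mC_\infty,\mop,X)=\mC_\infty(\mop^q X)=\fa{\mC_\infty}{Z}$, and that $\mop^q X$ is based since $\op(0)$ has a basepoint). By \autoref{split}, for any based $Z$ the map
\[
  \g(\tau)\times\g(\pi)\colon \g(\fa\mop Z)\longrightarrow \g(\fa\mop\ast)\times\g(\fa{\mC_\infty}Z)
\]
is a weak equivalence, and $\g(\fa\mop\ast)$ does not depend on $Z$. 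Composing with the projection, the level-$q$ map $\g B(\pi,1,1)_q$ is, up to weak equivalence, the projection $\g(\fa\mop\ast)\times \g(\fa{\mC_\infty}{\mop^q X})\to \g(\fa{\mC_\infty}{\mop^q X})$, whose fiber is $\g\fa\mop\ast$. Moreover this identification is natural in the simplicial variable, since \autoref{split} is natural in $X$ and the constant simplicial space $\g\fa\mop\ast$ maps compatibly into $\g B_\bullet(\mop,\mop,X)$ via the inclusion of the basepoint $\ast\to X$ noted just before the lemma. So levelwise we have a (split) homotopy fibration sequence of simplicial spaces
\[
  \g\fa\mop\ast \longrightarrow \g B_\bullet(\mop,\mop,X)\xrightarrow{\g B(\pi,1,1)} \g B_\bullet(\mC_\infty,\mop,X).
\]

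Next I would promote this to a fibration after realization. Because all monoids in sight are well-pointed, the bar constructions $B_\bullet(-,\mop,X)$ are good simplicial spaces, hence so are the simplicial spaces obtained by applying $\g$ levelwise (group completion preserves good simplicial spaces, using \autoref{gcthrm}(\ref{gc:monoid}) and that $\bvM_\cA$ and $\Omega B$ behave well), and fat realization agrees with realization. Realization of a levelwise fibration of good simplicial spaces whose base spaces are connected is again a fibration; one standard route is: each $\g B_q(\mC_\infty,\mop,X)$ is connected (it is $\Omega$ of a connected space), the levelwise fibrations are classified compatibly, and one invokes the simplicial analogue of the fact that a levelwise quasifibration with connected base realizes to a quasifibration (cf.\ \cite[Appendix]{May2} or the group-completion/realization lemmas already cited). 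Alternatively, since the levelwise fibration is split — the splitting coming from the product decomposition of \autoref{split} — one can realize the splitting directly: $|\g B_\bullet(\mop,\mop,X)|\simeq \g\fa\mop\ast\times |\g B_\bullet(\mC_\infty,\mop,X)|$ compatibly, and a (levelwise split, hence realized-split) projection is a fibration with the expected fiber $\g\fa\mop\ast$ (using that $\g\fa\mop\ast$ is a constant simplicial space so its realization is itself).

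The main obstacle I anticipate is the passage from a levelwise homotopy fibration sequence to a homotopy fibration sequence after geometric realization: levelwise fibrations do not realize to fibrations in general, and one must use either the connectivity of the base at each level (to get a quasifibration and then realize) together with goodness, or exploit the splitting provided by \autoref{split} to sidestep the issue entirely. I would lean on the splitting: it makes the statement essentially formal once one checks that the equivalences of \autoref{split} can be chosen naturally in the simplicial coordinate and that $\g$ commutes with realization up to homotopy (\autoref{gcthrm}(\ref{gc:monoid})), so that $|\g B_\bullet(\mop,\mop,X)|\to|\g B_\bullet(\mC_\infty,\mop,X)|$ is, up to equivalence, a projection off a factor of $\g\fa\mop\ast$, giving the desired homotopy fibration sequence.
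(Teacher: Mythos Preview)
Your levelwise analysis is correct and matches the paper's: at each simplicial degree $q$, \autoref{split} gives a weak equivalence $\g\mop(\mop^q X)\simeq \g\mop\ast\times\g\mC_\infty(\mop^q X)$, exhibiting the level-$q$ sequence as a homotopy fibration.

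The gap is in your passage to realizations via the splitting. You claim the equivalences of \autoref{split} are natural in the simplicial variable, but the first factor $\tau\colon\mop(\mop^q X)\to\mop\ast$ does \emph{not} commute with $d_0$. Indeed $d_0=\mu_{\mop^{q-1}X}$, and compatibility with the constant simplicial space $\mop\ast$ would force
\[
  \mop(!_{\mop^{q-1}X})\circ\mu_{\mop^{q-1}X}=\mop(!_{\mop^q X}),
\]
which by naturality of $\mu$ reduces to $\mu_\ast=\mop(!_{\mop\ast})$ as maps $\mop\mop\ast\to\mop\ast$. Since $\mop\ast\cong\op(0)$, the left side is the operad composition $\gamma(c;a_1,\dots,a_n)$ while the right side collapses each $a_i$ to the basepoint, giving $D(c)$. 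These disagree exactly because $\op(0)$ is not a point---the whole point of the OHS setup. So the product decomposition is only levelwise, not simplicial, and you cannot simply realize it.

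The paper handles the realization step differently: it invokes the Bousfield--Friedlander theorem \cite[B.4]{bousfieldfriedlander}. One must verify that $\g B_\bullet(\mop,\mop,X)$ and $\g B_\bullet(\mC_\infty,\mop,X)$ satisfy the $\pi_\ast$-Kan condition and that the map between them is a fibration on $\pi_0$. Both hold because the simplicial spaces are levelwise loop spaces, all structure maps and the comparison map are loop maps, and on $\pi_0$ one gets a map of simplicial groups (hence a Kan fibration). This is precisely the sort of hypothesis your first route gestures at, but the specific criterion you need is Bousfield--Friedlander's, not a bare ``connected base plus goodness'' statement.
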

  
\begin{proof} For each fixed $n$ we have a commutative  diagram 
  \[\xymatrix{
   \g \fa\mop\ast\ar@{=}[d]\ar[r]&\g \mop^{n+1}X \ar[r]\ar[d]_{\simeq}
   & \g \mC_\infty \mop^n X \ar@{=}[d]
   \\
   \g\fa\mop\ast\ar[r]^-\theta
   &\g \mop(\ast) \times \g \mC_\infty\mop^n X \ar[r]^-{\proj} 
   & \g \mC_\infty\mop^n X 
  }\]
  where the map $\theta$ is the  identity on the first factor and the composite 
  \[
    \g \fa\mop\ast \to \g \mop (\mop^n X) \xto{\pi} \g\mC_\infty \mop^n X
  \] 
  on the second factor.  
  The center vertical arrow is the homotopy equivalence of \autoref{split}. 
  The bottom row is a fiber sequence up to homotopy, and so is the top row.  
  
  In order to deduce that the top row is a homotopy fibration sequence after realization, we need to show that the conditions of \cite [B.4] {bousfieldfriedlander} are satisfied, namely, that the simplical spaces $\mathcal{G} B_\bullet(\mop, \mop, X)$ and $\mathcal{G} B_\bullet(\mC_\infty, \mop, X)$ satisfy the $\pi_\ast$-Kan condition and that the map between them is a fibration on $\pi_0$.
  This follows since  both the total and base  simplicial spaces are levelwise loop spaces,  and the structure maps for both simplicial spaces as well as the map between them are maps of loop spaces. 
  To be more  explicit, let $Z_\bullet$  be a simplicial space with $Z_m$ simple for  $m\geq 0$, and let $\pi_t (Z_m)_{free} $ denote the set of unpointed homotopy classes of maps from the $t$-sphere to $Z_m$. Then $Z_\bullet$ satisfies the $\pi_*$-Kan condition if and only if  the simplicial map $\pi_t (Z_\bullet)_{free} \to \pi_0 (Z_\bullet)$ is a fibration for each $t\geq 1$ \cite[B.3.1] {bousfieldfriedlander}. Thus both $\mathcal{G}B_\bullet(\mop,\mop,X)$ and 
  $\mathcal{G}B_\bullet(\mC_\infty, \mop, X)$ satisfy the $\pi_*$-Kan condition, and the induced map on connected components
  \[
    \pi_0 \mathcal{G}B_\bullet (\mop, \mop,X) \to \pi_0 \mathcal{G}B_ \bullet (\mC_\infty, \mop, X)
  \]
  is a map of simplicial groups and hence a fibration.
\end{proof}

If $X$ is an $\op$-algebra, there is a map of  $\mop$-\malgs{} 
$\fa{\mop}{\ast} \to  \fa{\mop}{X} \xto{\xi} X$ and, 
applying group completion, this defines a map $\g\mop(\ast)\to \g X$. 
An operad map $\cA\to \op$ defines a monoid structure on $X$ so  
the composite 
\begin{equation}\label{action} 
  \g\fa{\mop}{\ast} \times \g X \to \g X \times \g X \to \g X,
\end{equation}  
where the second map is loop sum, is an action $\g\fa\mop\ast$ on $\g X$ up to homotopy.
\vskip .2in

\begin{prop}\label{freereplacement}
  Let $\op$ be an operad with homological 
  stability and suppose 
  there is a compatible  map  $\pi\colon \op \to \cube_{\infty}$. 
  For any $\op$-\oalg{} $X$, 
  there is a weak homotopy equivalence between  
   $ \g X$ and  $|\g B_\bullet( \mC_\infty,\mop, \fa\mop\ast\times X)|$
  where $\fa\mop\ast\times X$ has the product $\op$-\oalg{} structure.
\end{prop}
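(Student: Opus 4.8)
The plan is to apply \autoref{product} twice: once directly to $X$, and once to the product $\op$-algebra $\fa{\mop}{\ast}\times X$. Applying \autoref{product} to $\fa{\mop}{\ast}\times X$ gives a homotopy fibration sequence
\[
  \g\fa{\mop}{\ast} \to |\g B_\bullet(\mop,\mop,\fa{\mop}{\ast}\times X)|
  \xto{\g B(\pi,1,1)} |\g B_\bullet(\mC_\infty,\mop,\fa{\mop}{\ast}\times X)|.
\]
So it suffices to identify the total space $|\g B_\bullet(\mop,\mop,\fa{\mop}{\ast}\times X)|$ with $\g X$, compatibly with the fibration structure, so that the quotient (i.e. the base) is $|\g B_\bullet(\mC_\infty,\mop,\fa{\mop}{\ast}\times X)|$ and hence the fiber sequence exhibits that base as the homotopy cofiber/quotient of the $\g\fa{\mop}{\ast}$-action on $\g X$.

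First I would handle the total space. Since $B_\bullet(\mop,\mop,-)$ is a free $\mop$-resolution, \autoref{bar_construction_properties}(2) gives a levelwise strong deformation retract $B(\mop,\mop,\fa{\mop}{\ast}\times X)\xto{\sim}\fa{\mop}{\ast}\times X$ as $\op$-algebras, hence — applying $\g$ levelwise (which preserves equivalences by \autoref{gcthrm}(\ref{gc:equivalence}), in its $A_\infty$-extended form) and realizing — a weak equivalence $|\g B_\bullet(\mop,\mop,\fa{\mop}{\ast}\times X)|\simeq \g(\fa{\mop}{\ast}\times X)$. By \autoref{gcthrm}(\ref{gc:product}), $\g(\fa{\mop}{\ast}\times X)\simeq \g\fa{\mop}{\ast}\times\g X$. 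Under this identification the inclusion of the fiber $\g\fa{\mop}{\ast}$ becomes the first-factor inclusion, and the projection onto the second factor recovers $\g X$; so the total space, modulo the fiber $\g\fa{\mop}{\ast}$, is $\g X$. The point of passing through the product algebra is exactly that this projection $\fa{\mop}{\ast}\times X\to X$ is a map of $\op$-algebras splitting off $\g\fa{\mop}{\ast}$ on the nose, which the structure map $\xi\colon\fa{\mop}{X}\to X$ alone does not do.

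It then remains to see that the fibration sequence identifies the base $|\g B_\bullet(\mC_\infty,\mop,\fa{\mop}{\ast}\times X)|$ as the quotient of the $\g\fa{\mop}{\ast}$-action on $\g X$ of \eqref{action}. For this I would compare, levelwise in the simplicial direction, the fibration sequence of \autoref{product} for $\fa{\mop}{\ast}\times X$ with the square used in its proof, noting that at each level $n$ the map $\g\mop^{n+1}(\fa{\mop}{\ast}\times X)\to \g\mC_\infty\mop^n(\fa{\mop}{\ast}\times X)$ is, via \autoref{split}, the projection $\g\fa{\mop}{\ast}\times(\cdots)\to(\cdots)$ with the $\g\fa{\mop}{\ast}$-factor acting by the monoid structure; passing to realizations and using that the action \eqref{action} is, up to homotopy, the one induced by the bar resolution, identifies the base with the homotopy quotient $\g X/\!\!/\g\fa{\mop}{\ast}$. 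Combined with \autoref{infiniteloop}, which says $|\g B_\bullet(\mC_\infty,\mop,-)|$ is an $\Omega^\infty$-space, this would complete the argument. The main obstacle is the bookkeeping in this last step: making precise that the action appearing implicitly in \autoref{product} for the product algebra agrees, up to coherent homotopy, with the action \eqref{action}, and that the resulting fibration's base is genuinely the homotopy quotient rather than merely having the homotopy type of $\g X$ — i.e. checking that the splitting $\g(\fa{\mop}{\ast}\times X)\simeq\g\fa{\mop}{\ast}\times\g X$ is equivariant for the $\g\fa{\mop}{\ast}$-actions on both sides.
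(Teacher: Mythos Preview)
Your overall strategy matches the paper's: apply \autoref{product} to the product algebra $\fa{\mop}{\ast}\times X$, identify the total space $|\g B_\bullet(\mop,\mop,\fa{\mop}{\ast}\times X)|$ with $\g\fa{\mop}{\ast}\times\g X$ via the bar retraction and \autoref{gcthrm}(\ref{gc:product}), and then read off the base. But there is a genuine gap at the step where you assert that ``the inclusion of the fiber $\g\fa{\mop}{\ast}$ becomes the first-factor inclusion.'' It does not.

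The fiber map $\iota\colon \g\fa{\mop}{\ast}\to |\g B_\bullet(\mop,\mop,\fa{\mop}{\ast}\times X)|$ is induced by the unique $\op$-algebra map $\fa{\mop}{\ast}\to \fa{\mop}{\ast}\times X$, which sends the basepoint to $((1,\ast),x_0)$. On the second factor this is the canonical $\op$-algebra map $\fa{\mop}{\ast}\to X$, which is \emph{not} constant: it carries $a\in\op(0)$ to $\theta(a)\in X$. After group completion and the product splitting, the fiber inclusion therefore becomes the \emph{twisted diagonal}
\[
  \triangle\colon \g\fa{\mop}{\ast}\longrightarrow \g\fa{\mop}{\ast}\times\g X,\qquad a\longmapsto (a,\,a\bar{x}_0),
\]
where $a\bar{x}_0$ is the action of \eqref{action}. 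So you cannot simply project to the second factor and conclude that the base is $\g X$.

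What the paper supplies, and what your proposal is missing, is the \emph{shearing} argument: the map $\sh\colon \g\fa{\mop}{\ast}\times\g X\to\g\fa{\mop}{\ast}\times\g X$, $(a,x)\mapsto(a,ax)$, is a homotopy equivalence because $\g\fa{\mop}{\ast}$ is grouplike, and it satisfies $\sh\circ\inc_1=\triangle$. This produces a map of fibration sequences
\[
\xymatrix{
  \g\fa{\mop}{\ast}\ar[r]^-{\inc_1}\ar@{=}[d] & \g\fa{\mop}{\ast}\times\g X \ar[r]\ar[d]^{\sh}_\simeq & \g X\\
  \g\fa{\mop}{\ast}\ar[r]^-{\triangle} & \g\fa{\mop}{\ast}\times\g X \ar[r] & |\g B_\bullet(\mC_\infty,\mop,\fa{\mop}{\ast}\times X)|
}
\]
from which the equivalence of bases follows. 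Your third paragraph gestures at checking equivariance of the splitting for the $\g\fa{\mop}{\ast}$-actions, but that is exactly the content of the shearing trick, and it is not automatic; without it the argument in your second paragraph does not go through.
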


\begin{proof}
  \autoref{product}, applied to the product $\op$-\oalg{} $\fa\mop\ast\times X$,  
  implies the top horizontal row in the following diagram is a fibration sequence:
  \[\xymatrix{
    \cG \fa\mop\ast \ar@{=}[dd] \ar[r]^-{\iota} &  
    |\cG B_\bullet(\mop, \mop, \fa\mop\ast \times X)| \ar[d]^{\g(\eval)} \ar[r] &
    |\cG B_\bullet (\mC_\infty, \mop, \fa\mop\ast\times  X)| \\
    &\cG (\fa\mop\ast \times X) \ar[d]^\simeq
    \\
     \cG \fa\mop\ast \ar[r]^-{\triangle} & \cG\fa\mop\ast \times \cG X &
   }\]
   Let $x_0$ be the base point of $X$.
  The map $\iota$ is obtained by mapping $*$ to  the basepoint 
  $((1,\ast), x_0) \in \fa\mop\ast \times X$ and  extending freely to a map 
  $\fa\mop\ast \to \fa\mop{(\fa\mop\ast \times X)}$.   The map  $\eval$ is induced by 
  the diagonal action of the monad $\mop$ on $\fa\mop\ast \times X$.  
  For the diagram above to commute, the image of $a \in \cG (\fa\mop\ast)$
  under $\triangle$ must be $(a, a\bar{x}_0) \in \cG \fa\mop\ast \times \cG X $ 
  where $\bar{x}_0$ denotes the image of $x_0$ in $\g X$ and $a\bar{x}_0$ 
  is the image of $(a,\bar{x}_0)$ under  \eqref{action}.  
  The vertical maps are homotopy equivalences, and so there is a fibration sequence 
  \[\xymatrix{
    \cG \fa\mop\ast \ar[r]^-{\triangle} & \cG \fa\mop\ast \times \cG X \ar[r]
    &|\cG B_\bullet (\mC_\infty, \mop,  \fa\mop\ast \times  X)|.
  }\]
  
  Now consider the {\bf shearing} map 
  $\sh\colon \cG \fa\mop\ast  \times \cG X \to \cG \fa\mop\ast \times \cG X$
  defined by $\sh ((a, x)) = (a, ax)$. As $\cG \fa\mop\ast$ is a group up to homotopy, 
  $\sh$ is a homotopy equivalence with homotopy inverse given by $\sh^{-1}((a,x)) = (a, a^{-1} x)$. 
  There is a commutative diagram
  \[\xymatrix{
    \cG \fa\mop\ast \ar[d]^= \ar[r] ^-{\inc_1} & \cG \fa\mop\ast \times \cG X \ar[d]^{\sh}\ar[r] & \cG X \\
    \cG \fa\mop\ast \ar[r]^-{\triangle} &\cG \fa\mop\ast \times \cG X \ar[r] & 
    |\cG B_\bullet (\mC_\infty , \mop, \fa\mop\ast  \times X)|
  }\] 
  where both rows are fibration sequences.
  The homotopy fiber of $\inc_1$ can be identified with
  $\Omega \cG X$.  The homotopy fiber of  $\triangle$ can be identified with
  $\Omega |\cG B_\bullet (\mC_\infty , \mop, \fa\mop\ast \times X)|$.
  As both vertical arrows are homotopy equivalences, so is the induced map between the 
  homotopy fibers. Since the above commutative diagram is a diagram of loop spaces, we 
  conclude that $\cG X$ and $|\cG B_\bullet (\mC_\infty , \mop, \fa\mop\ast \times X)|$ are homotopy equivalent. 
\end{proof}

%\vskip .2in
We can now state and prove our main theorem.

\begin{thm}
  \label{loopsandaction}  Suppose $\op$ is an OHS. Then group completion   
  defines a functor from $\op$-\oalgs{} to $\Omega^\infty$-spaces. In particular,
  $\g \fa\mop\ast$ is equivalent to an $\Omega ^\infty$-space and, for every $\op$-algebra $X$, there is an 
  $\Omega^\infty$-map 
  \[
    \g \fa\mop\ast\times \g X\to \g X,
  \]
  compatible with $\op$-algebra maps,
  where the source is given the product $\Omega ^\infty$-space structure.
\end{thm}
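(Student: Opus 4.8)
The plan is to assemble \autoref{loopsandaction} from the pieces built in \S\ref{sec:infinite_loop}, reducing everything to the two-sided bar resolution of an $\op$-algebra and the splitting theorem \autoref{split}. First I would handle the reduction to the case where $\op$ admits a compatible map $\pi\colon\op\to\cube_\infty$: by \autoref{cor:productohs} the operad $\widetilde\op=\op\times\cube_\infty$ is an OHS mapping to $\cube_\infty$, and any $\op$-algebra $X$ is a $\widetilde\op$-algebra via $\pi_1$; since the group completion functor $\g$ only sees the underlying $A_\infty$-structure (which is unchanged) and the statement is about $\g X$ and $\g\fa\mop\ast\cong\g(\op(0))$, there is no loss of generality in assuming the map $\pi$ exists. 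So from now on I may invoke \autoref{infiniteloop}, \autoref{product} and \autoref{freereplacement} directly.

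Next I would establish the functor. \autoref{infiniteloop} already says $X\mapsto|\g B_\bullet(\mC_\infty,\mop,X)|$ is a functor from $\op$-\oalgs{} to $\Omega^\infty$-spaces. The point is to identify this, naturally in $X$, with $\g X$. Apply \autoref{freereplacement} to $X$ itself: it gives a weak equivalence $\g X\simeq|\g B_\bullet(\mC_\infty,\mop,\fa\mop\ast\times X)|$. But $\fa\mop\ast\times X$ is again an $\op$-algebra, so \autoref{infiniteloop} applied to it shows the right-hand side is (functorially in the $\op$-algebra $\fa\mop\ast\times X$) an $\Omega^\infty$-space; and $X\mapsto\fa\mop\ast\times X$ is a functor on $\op$-algebras. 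Composing, $X\mapsto|\g B_\bullet(\mC_\infty,\mop,\fa\mop\ast\times X)|$ is a functor $\op[\aT]\to\Omega^\infty\text{-spaces}$ equipped with a natural weak equivalence to $\g(-)$, which is exactly the assertion that group completion defines such a functor (up to a natural zig-zag of $\Omega^\infty$-maps). In particular, taking $X=\ast$ (so $\fa\mop\ast$ with its standard $\op$-algebra structure, noting $\g\fa\mop\ast\cong\g(\op(0))$ via the identification $\op(0)\cong\mop(\ast)$ and \autoref{rmk:propagator2}), we get that $\g\fa\mop\ast$ is equivalent to an $\Omega^\infty$-space.

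For the action map I would revisit the proof of \autoref{freereplacement}. There we produced a fibration sequence $\cG\fa\mop\ast\xrightarrow{\triangle}\cG\fa\mop\ast\times\cG X\to|\cG B_\bullet(\mC_\infty,\mop,\fa\mop\ast\times X)|$ and a shearing equivalence $\sh$ identifying this with $\cG\fa\mop\ast\xrightarrow{\inc_1}\cG\fa\mop\ast\times\cG X\to\cG X$. Crucially, \emph{every} space and map in that diagram is a loop space and a loop map: the bar-resolved model is a realization of simplicial loop spaces (as in the proof of \autoref{product}), $\g\fa\mop\ast$ and $\g X$ are themselves $\Omega^\infty$-spaces by the previous paragraph, and $\sh$, being built from the multiplication and inversion of the grouplike $\Omega^\infty$-space $\cG\fa\mop\ast$ acting on the $\Omega^\infty$-space $\cG X$, is an $\Omega^\infty$-map. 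The composite $\cG\fa\mop\ast\times\cG X\xrightarrow{\sh}\cG\fa\mop\ast\times\cG X\xrightarrow{\proj_2}\cG X$ is then an $\Omega^\infty$-map which on underlying spaces is exactly the homotopy action \eqref{action}; this is the desired map $\g\fa\mop\ast\times\g X\to\g X$, compatible with $\op$-algebra maps by naturality of all the constructions involved.

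The main obstacle is the $\Omega^\infty$-rigidity: all of \S\ref{sec:infinite_loop} is phrased in terms of weak homotopy equivalences and homotopy fibration sequences, whereas the theorem asserts a genuine map \emph{of} $\Omega^\infty$-spaces. The careful point is that the bar-construction models $|\g B_\bullet(\mC_\infty,\mop,-)|$ carry a strict infinite loop structure coming from the $\mC_\infty\simeq\Omega^\infty\Sigma^\infty$ comparison and the commutation of $\Omega^\infty$ with geometric realization (as in the diagram in the proof of \autoref{infiniteloop}), so that the shearing equivalence and projection genuinely live in the category of $\Omega^\infty$-spaces; one must check that the identifications $\g X\simeq|\g B_\bullet(\mC_\infty,\mop,\fa\mop\ast\times X)|$ and the action \eqref{action} are compatible with this structure, i.e.\ that the zig-zags of \autoref{freereplacement} can be taken in $\Omega^\infty$-spaces. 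This is a matter of bookkeeping — tracking which maps in those proofs are loop maps — rather than new input, but it is where the real content of the statement (as opposed to \autoref{freereplacement}) lies.
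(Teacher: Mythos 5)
Your proposal matches the paper's proof for the reduction via \autoref{cor:productohs} and the identification $\g X\simeq|\g B_\bullet(\mC_\infty,\mop,\fa\mop\ast\times X)|$ giving the functor to $\Omega^\infty$-spaces; that part is essentially identical to what the paper does. The treatment of the action map, however, is where you diverge, and there the argument as written is circular. You want to deduce that \eqref{action} is an $\Omega^\infty$-map by factoring it as $\proj_2\circ\sh$ and claiming $\sh$ is an $\Omega^\infty$-map ``built from the multiplication and inversion.'' But $\sh(a,x)=(a,ax)$, and $ax$ \emph{is} the action \eqref{action}: the second coordinate of $\sh$ is precisely the map whose $\Omega^\infty$-ness you are trying to establish. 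Appealing to the grouplike structure of $\cG\fa\mop\ast$ gives you that $\sh$ is a \emph{homotopy equivalence} (which is all \autoref{freereplacement} uses), but a homotopy equivalence of $\Omega^\infty$-spaces need not be an $\Omega^\infty$-map, so the claim that $\sh$ lands in $\Omega^\infty$-spaces requires independent justification — and the only available justification is the very factorization you are trying to avoid stating.

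The paper closes this gap directly, without the shearing detour: \eqref{action} is by definition the composite
$\g\fa\mop\ast\times\g X\to\g X\times\g X\to\g X$,
where the first factor map $\g\fa\mop\ast\to\g X$ is an $\Omega^\infty$-map because $\fa\mop\ast\to\fa\mop X\to X$ is a map of $\op$-algebras (so the just-established functor applies), and the second map is loop sum, which is an $\Omega^\infty$-map for $\Omega^\infty$-spaces. Once you insert this observation, your $\sh$ is indeed an $\Omega^\infty$-map and the rest goes through — but at that point the shearing map adds nothing, since the paper's two-line factorization already gives the conclusion. So the fix is to state the factorization of \eqref{action} through loop sum explicitly rather than routing it through the homotopy equivalence from the proof of \autoref{freereplacement}.
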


\begin{proof}
  By \autoref{cor:productohs} we may assume without loss of generality that $\op$ comes equipped with a map 
  $\pi \colon\op \to \cube _\infty$
  which is compatible with the multiplication $\mu\colon \cA \to \op$. Hence, we may apply \autoref{infiniteloop}, to conclude that 
  $|\g B_\bullet(\mC_\infty, {\mop}, \fa{\mop}\ast\times X)|$ 
  is an $\Omega^\infty$-space.  By \autoref{freereplacement},
  \[
    \g X\simeq |\g B_\bullet( \mC_\infty, {\mop}, \fa{\mop}\ast\times X)|.
  \] 
  For a map of $\op$-\oalgs{} $f\colon X\to Y$ we have an infinite loop map    
  \[\xymatrix{
    |\g B_\bullet( \mC_\infty, {\mop}, \fa{{\mop}}\ast\times X)|\ar[r]&
    |\g B_\bullet(\mC_\infty, {\mop}, \fa{{\mop}}\ast\times Y)|.
  }\]  
  
  The action of $\g\fa\mop\ast$ on $\g X$ is as defined in \eqref{action}. 
  The maps $\fa\mop\ast\to \fa\mop{X}\to X$ are maps of $\op$-\oalgs{} and so $\g \fa\mop\ast
  \to \g X$ is a map of $\Omega^\infty$-spaces.
  In \eqref{action} 
  the first map is an infinite loop map and the second is loop sum.  Since  loop 
  sum is an $\Omega^{\infty}$-map
  for $\Omega^\infty$-spaces, the result follows. 
\end{proof}

We  remark that, via the map $\pi\colon \op \to \cube_\infty $, any infinite loop space is a group-complete $\op$-\oalg{}. However,  the induced action by $\g \fa\mop\ast$  factors through $\pi\colon \g\fa\mop \ast \to \g \mC_\infty(\ast) \simeq \Omega ^\infty S^\infty(\ast)\simeq \ast$, and is thus trivial. 

\begin{eg}[Abelian monoids]
  A topological abelian monoid $A$  defines an operad $\op$ where 
  \[
    \op(n) \coloneqq A
  \]
  with the  trivial $\Sigma_n$ action and $\ast = 1 = 0_A.$ 
  The structure maps in $\op$ are given by the monoid multiplication
 and  we  consider $\op$ a graded operad concentrated in degree $0$.  There is a canonical map of operads $\mu\colon \As \to \op$ which factors through $\Com$ and sends
  $\As (n)$ to the identity element of $\op (n)= A$. Thus $\mu $ satisfies  condition  \ref{htpycom}.

For this operad $D\colon \op (n) \to \op(0)$ is the identity map on $A$, 
and so using \autoref{loopsandaction} every group-like $\op$-\oalg{} is an $\Omega^{\infty}$-space and 
  comes equipped with an $\Omega^{\infty}$-space action by the group completion of $A$.

  We can characterize $\op$-algebras as abelian monoids $X$ with a monoid map $f\colon A\to X$.  
 Then the consequences of \autoref{loopsandaction}  are familiar for this operad
 since  any abelian monoid  $X$ is an $E_\infty$-algebra, and the group completion $\Omega B X$ is an infinite loop space.  
\end{eg}

%%%%%%%%%%%%%%%%%%%
%%%%%Section 7%%%%%
%%%%%%%%%%%%%%%%%%%
\section{Higher dimensional manifolds}\label{highdim}
In \cite{GRWII} Galatius and Randal-Williams prove a general homological stability result for even dimensional manifolds. We utilize their result to provide new examples of operads with homological stability. 
 We first recall the set-up from \cite {GRWII}.  

\subsection{Homological stability for spaces of manifolds}
Let $W$ be a $2k$-dimensional smooth, compact manifold with boundary $P$. Let 
$\theta \colon B\to BO(2k)$ be a fibration and $\theta^* (\gamma_{2k})$ be the pull-back of the canonical $2k$-bundle over $BO(2k)$. 
Fix a bundle map  
\[ 
 \ell_P \colon \epsilon^1 \oplus TP \to \theta^*\gamma_{2k}
\] 
and let $\text{Bun}_{k, \partial}^{\theta}(TW; \ell_P)$ 
denote the space of all bundle maps $\ell\colon TW\to\theta^*\gamma_{2k}$ where the restriction of $\ell$ to $P$ is $\ell_P$  and the  underlying map $\ell \colon W\to B$ is $k$-connected. 
We call a bundle map $\ell_W\colon TW\to \theta^*\gamma_{2k}$ a $\theta$-\textit{structure} on $W$ and define a moduli space associated to $W$ and a given $\theta$-structure $\ell_W$ by letting
\[
  \mathcal{M}_k^{\theta}(W; \ell_W ) \subset \text{Bun}_{k,\partial}^\theta (TW; \ell_P)//\Diff(W; \partial)
\]
be the connected component determined by $\ell_W$. 
Here $\Diff(W; \partial)$ is the topological group of diffeomorphisms of $W$ that restrict to the identity on a neighborhood of $\partial W$ and $//$ denotes the homotopy orbit space (also called the Borel construction). We refer the reader to \cite{GRWII} for more details.

To construct maps between moduli spaces, let $M$ be a cobordism between $P$ and $Q$ equipped with a $\theta$-structure 
$\ell_M$ which restricts to $\ell_P$ over $P$ and to $\ell_Q$ over $Q$. Then there is a map
\[
  \cup_P(M, \ell_M)\colon \  \mathcal{M}_k^{\theta}(W; \ell_W) \to \mathcal{M}_k^{\theta}(W';\ell_{W'}) 
\]
where $W' = W \cup _P M$ and $\ell_{W'} = \ell_W \cup \ell _M$.
\begin{thm} \cite[1.7]{GRWII}\label{thm:grw}
If $2k \geq 6$, 
$B$ is simply connected, and $(M, P)$ is $(k-1)$-connected, the  map $\cup_P(M, \ell_M)$
induces an isomorphism on homology with constant coefficients in degrees 
$* \leq (g-4)/3$.
\end{thm}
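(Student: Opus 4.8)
The plan is to reduce to a standard stabilization map and then analyze that map by resolving the moduli space with a highly connected semi-simplicial space. First I would factor $\cup_P(M,\ell_M)$ through a sequence of single handle attachments: since $(M,P)$ is $(k-1)$-connected, $M$ is built from $P\times[0,1]$ by attaching handles of index at least $k$, each of which --- once $B$ is simply connected, so that the $\theta$-structure on the core can be standardized --- contributes a stabilization map of the same type. Pairs of handles of index $k$ and $k+1$ assemble into the model map
\[
  s\colon \mathcal{M}^\theta_k(W;\ell_W)\longrightarrow \mathcal{M}^\theta_k\big(W\#(S^k\times S^k);\ell\big)
\]
obtained by gluing on the genus-one cobordism $W_{1,2}$, and it suffices to prove that $s$ and the remaining single-handle maps are homology isomorphisms through a range of slope close to $1/2$ in the genus of $W$.

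For this, given a $\theta$-manifold $(W,\ell_W)$ with a fixed incoming boundary sphere $P\cong S^{2k-1}$, I would form an augmented semi-simplicial space $X_\bullet\to\mathcal{M}^\theta_k(W;\ell_W)$ whose $p$-simplices record the moduli point together with $p+1$ ordered, pairwise disjoint, $\theta$-compatible embedded surgery data tethered to $P$ --- half-handle cores each realizing a primitive middle-dimensional homology class, so that surgering along any one of them lowers the genus by one --- with face maps forgetting one datum. The crucial estimate is that $|X_\bullet|\to\mathcal{M}^\theta_k(W_{g,1};\ell)$ is roughly $\tfrac{1}{2}(g-c)$-connected. I would establish this in two steps: a resolution step, using microflexibility of spaces of embeddings (equivalently, contractibility of the space of $\theta$-structures on a core) to replace the connectivity of $|X_\bullet|$ by high connectivity of the simplicial complex of such surgery systems; and a disjunction step, a ``bad simplices''/coloring argument in the style of Hatcher--Wahl, which uses that $W_{g,1}$ contains $g$ disjoint hyperbolic summands $S^k\times S^k$ and that intersecting middle-dimensional classes in a $(2k{-}1)$-dimensional boundary can be separated by embedded surgery via the Whitney trick. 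This connectivity estimate is where I expect the real difficulty to lie: it is the geometric heart of the argument, and it is where the hypotheses $2k\geq 6$ (so that the Whitney trick is available) and $B$ simply connected are genuinely used.

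Finally I would run the spectral sequence of the skeletal filtration of $|X_\bullet|$. Each space of $p$-simplices is weakly equivalent to a moduli space of genus-$(g-p-1)$ manifolds with extra boundary data (the space of surgery cores being weakly contractible once the genus is large), so $E^1_{p,q}=H_q(X_p)$ is the homology of a lower-genus moduli space and the $d^1$-differential is an alternating sum of copies of the model map. The connectivity estimate pins down the range in which $H_*(|X_\bullet|)$ agrees with the homology of the augmentation target, and a strong induction on the genus --- in which the estimate simultaneously controls surjectivity and injectivity in the comparison --- then forces the model map, and hence $\cup_P(M,\ell_M)$, to be an isomorphism on $H_*(-;k)$ through degrees $*\leq(g-4)/3$. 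The drop from slope $1/2$ in the connectivity estimate to slope $1/3$ in the stated range is the usual loss of a two-variable induction, further degraded by the reduction from a general $M$ to the model map.
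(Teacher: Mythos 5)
This statement is quoted from \cite[Theorem 1.7]{GRWII}; the paper does not prove it, so there is no internal proof to compare your attempt against. As a strategy sketch of the Galatius--Randal-Williams argument, your outline is broadly faithful: reducing to a genus-one stabilization map after trading handles so that $(M,P)$ has only handles of index $\geq k$, resolving the moduli space by a semi-simplicial space of tethered surgery data, proving a high-connectivity estimate for that resolution (where $2k\geq 6$ enters via the Whitney trick and simple connectivity of $B$ controls the $\theta$-structure component), and closing with a spectral-sequence argument whose two-variable induction degrades the slope from roughly $1/2$ to $1/3$. The hard content --- the precise formulation of the surgery semi-simplicial space and the connectivity estimate --- is named but not executed, so what you have is a plan, not a proof. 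Finally, note that what the present paper actually uses is the weaker \emph{stable} stability statement recorded in \autoref{thm:grw:add}, citing \cite[1.3]{GRWII}, which has the considerable advantage of holding for all $2k\geq 2$ and is what feeds into \autoref{thm:even_mfld_stability}.
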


A $g$-fold connected sum 
of $S^k \times S^k$ from which $n$ disks have been removed is a manifold of type $W_{g,n}$.
Then the {\bf genus} $g$ of $(W,\ell_W)$ is  the maximal number of disjoint
copies of $W_{1,1}$  in $W$ where each $W_{1,1}$ has a $\theta$-structure that is admissible in the sense of \cite[1.2]{GRWII}. % Definition 1.2 in \cite {GRWII}.

\begin{rmk}[Addendum to \autoref{thm:grw}]\label{thm:grw:add}
The above statement is a finite genus homological stability statement. However, the crucial property for an OHS only requires an infinite genus homological stability property! This is proved in \cite[1.3]{GRWII} {\bf for all $2k \geq 2$}. More concretely,
the map $\cup_P(M, \ell_M)$ induces isomorphisms in homology in any even dimension with any abelian system of local coefficients from the homotopy limit
of $\mathcal{M}_k^{\theta}(W; \ell_W)$ under gluing operation  $\cup_P (H_P,\ell_{H_P})$ to the homotopy colimit of $\mathcal{M}_k^{\theta}(W';\ell_{W'})$ under the gluing operation 
$\cup_Q (H_Q,\ell_{H_Q})$.
Here $H_P$ is the connected sum of $S^k\times S^k$ with $[0,1] \times P$, and the manifold $H_Q$ is defined similarly. Again, the $\theta$-structures $\ell_{H_P}$ and $\ell_{H_Q}$ have to be admissible in the sense of \cite[1.2]{GRWII}. % Definition 1.2 of \cite{GRWII}.
\end{rmk}

\subsection{A manifold operad}\label{sec:mfld_operad}
The cobordism categories of embedded manifolds in Euclidean space in \cite {GMTW} and subsequent work by Galatius and Randal-Williams give rise to $\Gamma$-spaces in the sense of \cite {segalcats} but they are not symmetric monoidal. Therefore we cannot simply use an appropriate subcategory to define our operad. Instead we follow  the approach for surface categories  using atomic cobordisms first introduced in \cite {Tillmann99}.

Fix a fibration $\theta\colon  B\to BO(2k)$ where $B$ is $k$-connected. For example, take $B$ to be the $k$-connective cover of $BO(2k)$.
Let $Q = S^{2k-1}$ and consider fixed atomic $2k$-cobordisms 
 \begin{itemize} 
  \item a handle $H\colon Q \mapsto Q$,  
  \item a sphere with three holes $M\colon Q \sqcup Q \mapsto Q$,  
  \item a disk $D\colon \emptyset \mapsto Q$, 
 \end{itemize}
 where $H$ is of type 
 $W_{1, 1+1}$, $M $ of type $W_{0,2+1}$ and $D$ of type $W_{0,1}$, all with parametrized boundaries.
 See \autoref{atomic_surfaces} for atomic cobordism for  $ k = 1$ and $B = BSO(2)$.
 Let $\ell_Q$ be a $\theta$-structure on $Q$ such that $\ell_Q \sqcup \ell_Q$ extends to a $\theta$-structure 
 $\ell_H$ on $H$, $\ell_Q \sqcup \ell_Q \sqcup \ell_Q$ extends to a $\theta$-structure
 $\ell_M$ on $M$, and $\ell_Q$ extends to a $\theta$-structure $\ell _D$ on $D$. We will call the quadruple $\{\ell_Q, \ell_H, \ell_M, \ell_D\}$ an {\bf operadic} $\theta$-structure. 
Gluing the atomic pieces along their boundaries (as cobordisms) produces new cobordisms $W$ of type 
\[W_{g, n+1}\colon \coprod _n Q \mapsto Q\]
each equipped with a $\theta$-structure $\ell_W$. 

\begin{figure}
\begin{tikzpicture}
  \draw [ultra thick] (0,1) ellipse (.10 and .5);
  \draw (0,.5) to [out=0,in=-90] (.7,1)  to [out=90,in=0]  (0,1.5);
  \node at (.25, -.5){$D$};
  
  \draw [ultra thick] (1.8,1) ellipse (.10 and .5);
  \draw [ultra thick] (4.2,1) ellipse (.10 and .5);
  \draw (1.8,.5) to [out=-5,in=180] (3,0)  to [out=0,in=185]  (4.2,.5);
  \draw (1.8,1.5) to [out=5,in=180] (3,2)  to [out=0,in=175]  (4.2,1.5);
    \draw (2.5,1.1) to [out=-45,in=180] (3,.85)  to [out=0,in=225]  (3.5,1.1);
  \draw (2.7,.9) to [out=45,in=180] (3,1.05)  to [out=0,in=135]  (3.3,.9);
  \node at (3, -.5){$H$};

  \draw [ultra thick] (5.3,1) ellipse (.10 and .5);
  \draw [ultra thick] (7.5,0) ellipse (.10 and .5);
  \draw [ultra thick] (7.5,2) ellipse (.10 and .5);
  \draw (5.3,1.5) to [out=5,in=180]  (7.5,2.5);
  \draw (5.3,.5) to [out=-5,in=180]  (7.5,-.5);
  \draw (7.5,1.5) to [out=200,in=80] (6.75,1)  to [out=-80,in=150]  (7.5,.5);
    \node at (6.3, -.5){$M$};
\end{tikzpicture}
\caption{Orientable Atomic Surfaces}\label{atomic_surfaces}
\end{figure}
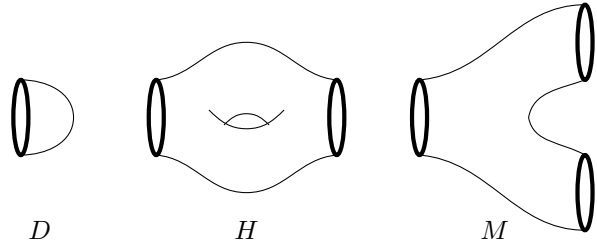
  
For $(g, n)$, let $\cC_{g,n}$ be the groupoid whose 
objects are pairs $(W, \ell)$ where 
\begin{itemize}
  \item $W$  is a cobordism of type $W_{g, n+1}$  constructed from the atomic manifolds and equipped with an ordering of its $n$ incoming boundary components, and
  \item $\ell \colon \, TW \to \theta^*(\gamma_{2k})$ is a $\theta$-structure  in the path-connected component  of $\ell_W$ in the space of all  bundle maps $TW \to \theta^*(\gamma_{2k})$ or in the orbit of such a map under the action of the group $\Diff (W; \partial)$ of diffeomorphisms of $W$ that fix a collar of the boundary. 
\end{itemize}
By assumption $B$ is $k$-connected and $W$ is $(k-1)$-connected and hence $\ell$ is $k$-connected.
A  morphism between $(W, \ell)$ and $(W', \ell ')$ is a diffeomorphism
$\phi \colon W \to W'$ that commutes with the parametrizations of the boundary components (and collars) and the ordering of the incoming boundary components. Furthermore,
we require that 
\[
  \ell = D\phi \circ \ell'\colon \,  TW \to \theta^*(\gamma_{2k}).
\]

\begin{lem}\label{lem:theta_classifying}
  There is a homotopy equivalence 
  \[
   B\cC _{g,n} \simeq \cM ^\theta _k (W_{g,n+1}, \ell_{W_{g,n+1}}).
  \]
\end{lem}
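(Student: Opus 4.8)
The plan is to realise $\cC_{g,n}$ as a topological groupoid equivalent to an action groupoid, and then to identify the classifying space of that action groupoid with a connected component of a Borel construction, which is $\cM^\theta_k(W_{g,n+1};\ell_{W_{g,n+1}})$ by definition. First I would fix once and for all a single model $W_0$ among the cobordisms of type $W_{g,n+1}$ built by gluing copies of the atomic pieces $D$, $H$, $M$, together with the $\theta$-structure $\ell_{W_0}$ obtained by gluing $\ell_D$, $\ell_H$, $\ell_M$; this is the $(W_{g,n+1},\ell_{W_{g,n+1}})$ appearing in the statement. Writing $\text{Bun}(W_0)$ for $\text{Bun}^\theta_{k,\partial}(TW_0;\ell_{\partial W_0})$, where $\ell_{\partial W_0}$ is the restriction of $\ell_{W_0}$ to the parametrised boundary, I would let $U\subseteq\text{Bun}(W_0)$ be the $\Diff(W_0;\partial)$-invariant union of those path components lying in the $\Diff(W_0;\partial)$-orbit of the component of $\ell_{W_0}$. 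By construction, the objects of $\cC_{g,n}$ whose underlying manifold is $W_0$ are exactly the points of $U$.

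Next I would observe that a morphism of $\cC_{g,n}$ between two such objects $(W_0,\ell)$ and $(W_0,\ell')$ is a $\phi\in\Diff(W_0;\partial)$ with $\ell=D\phi\circ\ell'$, so that $\ell$ is determined by $\phi$ and $\ell'$; hence the full subgroupoid $\cC'\subseteq\cC_{g,n}$ on objects with underlying manifold $W_0$ is precisely the action groupoid $U\rtimes\Diff(W_0;\partial)$ for the restriction to $U$ of the natural $\Diff(W_0;\partial)$-action on $\text{Bun}(W_0)$ used to define $\cM^\theta_k$. I would then show that the inclusion $\iota\colon\cC'\hookrightarrow\cC_{g,n}$ is an equivalence of topological groupoids. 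Any model $W$ (built from atomic pieces, with its boundary parametrisation and ordering of incoming ends) is diffeomorphic to $W_0$ compatibly with all of this structure by the classification of manifolds of type $W_{g,n+1}$; choosing such a diffeomorphism $h_W\colon W\to W_0$ for each $W$ — a choice indexed by the \emph{discrete} set of models, hence automatically continuous on the object space — and arranging (using that the models and their glued $\theta$-structures all represent the same class) that $r(W,\ell)\coloneqq(W_0,\ell\circ Dh_W^{-1})$ lands in $U$, gives a functor $r\colon\cC_{g,n}\to\cC'$ which restricts to the identity on $\cC'$ when $h_{W_0}=\id$ and for which $(W,\ell)\mapsto h_W$ is a continuous natural isomorphism $\iota\circ r\cong\id$. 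Since natural isomorphisms of functors between topological groupoids induce homotopies of the induced maps on nerves, $B\iota$ is a homotopy equivalence.

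Finally I would identify $B\cC'$ with the moduli space. The nerve of the action groupoid $U\rtimes\Diff(W_0;\partial)$ is the two-sided bar construction $B_\bullet(U,\Diff(W_0;\partial),\ast)$, so $B\cC'\cong U//\Diff(W_0;\partial)$; and since $\pi_0$ of a Borel construction $Y//G$ is $\pi_0(Y)/G$, the path component of $\text{Bun}(W_0)//\Diff(W_0;\partial)$ determined by the image of $\ell_{W_0}$ is exactly $U//\Diff(W_0;\partial)$, which is $\cM^\theta_k(W_0;\ell_{W_0})=\cM^\theta_k(W_{g,n+1};\ell_{W_{g,n+1}})$ by definition. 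Combining with the previous paragraph yields the lemma.

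The main obstacle will be the middle step: passing from the large groupoid $\cC_{g,n}$, which has a continuum of pairwise non-isomorphic objects and a disconnected morphism space, to the action groupoid $\cC'$. The care there goes into checking that the reference diffeomorphisms $h_W$ can be chosen coherently with the boundary parametrisations, the ordering of the incoming ends, and the glued $\theta$-structures — so that $r$ genuinely lands in $U$ and is a functor of \emph{topological} groupoids — and that the resulting equivalence is levelwise well-pointed enough to descend to a homotopy equivalence of realisations; these facts are standard for the diffeomorphism groups and mapping spaces occurring here. The remaining inputs — the classification of $W_{g,n+1}$ rel parametrised boundary, the identification $B(X\rtimes G)\cong X//G$, and the computation of $\pi_0$ of a Borel construction — are routine.
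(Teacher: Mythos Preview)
Your proposal is correct and follows essentially the same route as the paper: fix one underlying manifold $W_0$, observe that the full subgroupoid on objects with that underlying manifold is the action (translation) groupoid for $\Diff(W_0;\partial)$ acting on the relevant space of $\theta$-structures (your $U$ is the paper's $X_W$), argue that the inclusion of this subgroupoid is an equivalence of topological categories, and then identify its classifying space with the Borel construction $X_{W_0}//\Diff(W_0;\partial)=\cM^\theta_k(W_{g,n+1};\ell_{W_{g,n+1}})$. Your version is more explicit about the retraction functor $r$ and the point-set issues, but the strategy is the same.
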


\begin{proof} 
  For $W\in \cC_{g,n}$, let $X_W$ denote  the space of all  $\theta$-structures $\ell$ which are  in the path-connected component  of $\ell_W$ in the space of all  bundle maps $TW \to \theta^*(\gamma_{2k})$ or in the orbit of such a map under the action of the group $\Diff (W; \partial)$.
  Then, for any other $W' \in \cC_{g,n}$, $X_W$ is homotopy equivalent to $X_{W'}$, and given a diffeomorphism $\phi\colon W \to W'$, the $\theta $-structure $D\phi \circ \ell'$ is in $X_W$ whenever $\ell' \in X_{W'}$.  
  Thus by construction $\cC_{g,n}$ is equivalent as a topological category to its full
  subcategory on objects of the form $(W, \ell)$ for some fixed $W$. This subcategory is the translation category of $X_W$ 
  under the action of $\Diff (W; \partial)$. Then the nerve of this subcategory has the homotopy type of $X_W // \Diff (W; \partial)$. By definition this is $\cM ^\theta _k (W, \ell_W)$.
\end{proof}

We then form a manifold operad $\cW^{2k}$ by defining
 \[\cW ^{2k}(n)=\coprod_{g\geq 0 } B\cC _{g,n}.\]
The operad is graded by the genus   and the structure maps $\gamma$ are defined by gluing outgoing boundary spheres of one cobordism to incoming boundary spheres of others. 
In the next subsection we will identify an $A_\infty$-operad as a sub-operad of the genus zero sub-operad $\cW^{2k}_0$ of $\cW^{2k}$ so that
condition \ref{htpycom} is satisfied.   

Since the pair  $(D, Q) = (D^{2k}, S^{2k-1})$ is $(k-1)$-connected, for $2k\geq 2$ the results of Galatius and Randal-Williams quoted above, \autoref{thm:grw} and \autoref{thm:grw:add}, show that
\[
 D\colon \cW _\infty (n) \longrightarrow \cW _\infty (0)
\]
is a homology isomorphism.
Thus we have proved the following result.

\begin{thm}\label{thm:even_mfld_stability}
For $2k\geq 2$, 
the operad $\cW^{2k}$ associated to a fibration $\theta \colon \, B \to BO(2k)$ with $B$ $k$-connected and an operadic choice of 
  $\theta$-structures defines an operad with homological stability.
\end{thm}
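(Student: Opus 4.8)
The plan is to verify the three requirements of \autoref{HS} for $\op=\cW^{2k}$: that it is a graded operad, that it receives a multiplication map $\mu\colon\cA\to\cW^{2k}$ from an $A_\infty$-operad satisfying the weak commutativity condition \eqref{htpycom}, and that the capping map $D_\infty$ is a homology isomorphism. The first is built into the construction of \S\ref{sec:mfld_operad}: with $\cW^{2k}(n)=\coprod_gB\cC_{g,n}$ graded by genus, the axioms of \autoref{gradedV} hold because the structure maps glue outgoing boundary spheres to incoming ones and genus is additive under such gluings of the atomic pieces, while the basepoint is the atomic disk $D$, of genus $0$.

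For the multiplication map I would build $\mu$ into the genus-zero suboperad $\cW^{2k}_0$ out of a cofibrant, tree-indexed model $\cA$ of $\As$ — for definiteness the Boardman--Vogt resolution of $\As$, whose $n$-ary space has $n!$ contractible path components permuted freely by $\Sigma_n$, so that $\cA\to\As$ is a levelwise $\Sigma_n$-equivariant homotopy equivalence and $\cA$ is an $A_\infty$-operad. The map $\mu$ sends a metric tree with $n$ leaves to the cobordism of type $W_{0,n+1}$ obtained by gluing copies of the atomic pair of pants $M$ along the pattern of the tree (the metric recording lengths of inserted cylinders over $Q=S^{2k-1}$), equipped with the $\theta$-structure assembled from $\ell_M$ and $\ell_D$, and sends the nullary operation to $D$; as gluing of cobordisms is strictly associative and grafting of trees is operad composition in $\cA$, this is a map of graded operads concentrated in genus $0$. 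The associativity-up-to-coherent-homotopy of $M$ that makes such a $\mu$ exist reduces to path-connectivity of the spaces $\cW^{2k}_0(n)=B\cC_{0,n}$, each of which by \autoref{lem:theta_classifying} is homotopy equivalent to the connected moduli space $\cM^\theta_k(W_{0,n+1},\ell_{W_{0,n+1}})$. In particular $\cW^{2k}_0(2)$ is path-connected, so $\mu(\cA(2))$ automatically lies in one path component there, which is condition \eqref{htpycom}.

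The substantive input is the homological stability requirement, and it is supplied by \cite{GRWII}. By \autoref{lem:theta_classifying} the map $D\colon\cW_g(n)\to\cW_g(0)$ is, up to homotopy, the Galatius--Randal-Williams gluing map $\cM^\theta_k(W_{g,n+1},\ell)\to\cM^\theta_k(W_{g,1},\ell)$ that caps the $n$ incoming boundary spheres with copies of $D^{2k}$, i.e.\ an instance of $\cup_P(-)$ for the cobordism $\coprod_nD^{2k}$. Since $D^{2k}$ is contractible and $S^{2k-1}$ is highly connected, the pair $(D^{2k},S^{2k-1})$ is $(k-1)$-connected for every $k\ge1$; and since the propagator of $\cW^{2k}$ is the atomic handle $H$, the space $\cW_\infty(n)=\hocolim_g\cM^\theta_k(W_{g,n+1},\ell)$ is a genus stabilization. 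Applying \autoref{thm:grw:add} — the infinite-genus stability theorem \cite[1.3]{GRWII}, valid in every even dimension $2k\ge2$ with any abelian system of local coefficients — to this $(k-1)$-connected capping cobordism shows that $D_\infty\colon\cW_\infty(n)\to\cW_\infty(0)$ is a homology isomorphism; for $2k\ge6$ one could instead pass to the colimit over $g$ in the finite-range statement \autoref{thm:grw}. Only constant coefficients are needed, and it is precisely the weak, infinite-genus form of the hypothesis in \autoref{HS}, rather than the finite-range form, that makes the cases $2k\in\{2,4\}$ accessible. This would complete the verification that $\cW^{2k}$ is an OHS.

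I expect the main obstacle to be not conceptual but one of bookkeeping in the last step. One must confirm that the genus grading on $\cW^{2k}$ agrees with the $\theta$-genus that controls the stability range in \cite{GRWII} — which is exactly why the atomic handle is required to carry an admissible $\theta$-structure in the sense of \cite[1.2]{GRWII} — that the homotopy (co)limits appearing in \cite[1.3]{GRWII} are correctly identified with $\cW_\infty(n)$ and $\cW_\infty(0)$, and that the boundary component to which handles are attached there, which may be changed freely since genus stabilization is insensitive to that choice, is reconciled with the definition of the propagator in \autoref{HS}. A second, smaller task is to carry out the $A_\infty$ construction sketched above in detail and check that $\cA\to\As$ is a levelwise equivariant equivalence; this is the part deferred to the following subsection.
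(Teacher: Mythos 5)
The homological stability step is correct and matches the paper exactly: you correctly identify $D$ with the gluing map that caps off the $n$ incoming spheres by disks, observe that the pair $(D^{2k},S^{2k-1})$ is $(k-1)$-connected, take the handle $H$ as propagator, and invoke the infinite-genus version of Galatius--Randal-Williams (\autoref{thm:grw:add}, valid for all $2k\ge 2$) rather than the finite-range \autoref{thm:grw}. The grading discussion is also as in the paper.

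Where you diverge is in the construction of the $A_\infty$-operad $\cA$ and the multiplication $\mu\colon \cA\to\cW^{2k}$, and here your argument has a gap. You propose to take $\cA$ to be the Boardman--Vogt resolution $W(\As)$ and to define $\mu$ by sending a metric planar tree to the cobordism obtained by gluing copies of the pair of pants $M$ along the tree with cylinders of lengths prescribed by the metric, then assert that ``the associativity-up-to-coherent-homotopy of $M$ that makes such a $\mu$ exist reduces to path-connectivity of the spaces $\cW^{2k}_0(n)$.'' That last claim is not correct. Path-connectivity of $\cW^{2k}_0(2)$ is exactly what condition \eqref{htpycom} asks for, but constructing a map of operads out of $W(\As)$ requires \emph{coherent} higher homotopies filling all the associahedra, not just paths. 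For $2k>2$ the spaces $\cW^{2k}_0(n)\simeq\cM^\theta_k(W_{0,n+1})$ are not aspherical (the relevant diffeomorphism groups have nontrivial higher homotopy), so the higher coherences are genuinely additional data, and path-connectivity says nothing about whether the required $2$-cells, $3$-cells, etc., can be chosen compatibly. Your ``metric tree $\mapsto$ cobordism with cylinders of that length'' recipe is a plausible way to produce such coherences, but making it a continuous, operadic, $\Sigma_n$-equivariant map into the classifying-space model $\coprod_g B\cC_{g,n}$ — including the degenerate limit where cylinder lengths go to zero and the combinatorial tree type changes — is precisely the work to be done, and the proposal does not do it.

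The paper sidesteps these coherence issues entirely by building the $A_\infty$-operad geometrically: it forms a groupoid operad $\cT$ of two-dimensional ``tiles'' glued from $S$ and $R$, with morphism spaces given by relative diffeomorphism groups (each contractible), proves directly that $\cA^d=B\cT^d$ (a thickening of $B\cT$) is an $A_\infty$-operad, and then defines $\mu\colon\cA^{2k}\to\cW^{2k}$ at the level of groupoids by identifying the thickened tiles $S,R$ with the atomic cobordisms $M,D$ and passing to classifying spaces. Because both $\cA^{2k}$ and $\cW^{2k}$ are built as classifying spaces of cobordism groupoids, $\mu$ exists on the nose with no choices of homotopies to make, and condition \eqref{htpycom} then follows from connectedness of $\cW^{2k}_0(2)$ just as you say. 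If you want to keep the Boardman--Vogt route you would need to supply the operadic coherences explicitly (or appeal to a lifting/cofibrancy argument that shows some strict-enough structure on $\cW^{2k}_0$ induces a $W(\As)$-action), and this is not a bookkeeping matter.
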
 

\begin{eg}\label{eg:BOk}
  Let $\theta\colon BO(2k) \left< k \right> \to BO(2k)$ be the $k$-connected cover of $BO(2k)$.  Assume that our atomic cobordisms $D,M,$ and $H$ are embedded in $[0,1] \times \mathbb R^\infty$ as cobordisms, that is, the outgoing boundary $\partial _+$ is contained in $\{0\} \times \mathbb R^\infty$ and the incoming boundary $\partial _-$ is contained in $\{1 \}\times \mathbb R^\infty$; each boundary component is equipped with a collar
   $\partial_+ \times [0, \epsilon)$, respectively $\partial _-\times (\epsilon , 1]$; and the boundary spheres can be mapped to each other by rigid translations in $\mathbb R \times \mathbb R^\infty$. Then the Gauss map defines a map from the atomic cobordisms (and hence any $W$ built from them) to $BO(2k)$ which on (and near) the boundary restricts to a map $\ell_Q\colon \epsilon^1 \oplus TQ \to \gamma _{2k}$.
  As each atomic cobordism and each $W$ built from them is $k$ connected, the Gauss maps have compatible lifts to $BO(2k)\left< k \right>$, their Moore-Postnikov $k$-stage. Thus, for this $\theta$-structure, $\cW^{2k} \simeq B\Diff (W_{g, n+1};\partial)$.
\end{eg}

As an immediate consequence of  \autoref{thm:even_mfld_stability}, we have that
\[
  \Omega B \left(\coprod _{g\geq 0} \cM ^\theta _k (W_{g, 1}, \ell_{W_{g,1}})\right) = \Omega B \cW ^{2k} (0)
\]
is homotopy equivalent to an  infinite loop space.
Furthermore, on application of the group completion theorem, we deduce that it is homotopy equivalent to  
\[
  \mathbb Z \times \left(\hocolim \limits_{g \to  \infty} \cM ^\theta _k (W_{g,1}, \ell_{W_{g,1}})\right)^+ .
\]
After restricting to a connected component, the analog of the Madsen-Weiss Theorem for higher dimensional manifolds and %Corollary 1.8 of 
\cite[1.8] {GRWII}, imply 
\begin{equation}\label{mtidentification}
  \Omega B_0 \left(\coprod _{g\geq 0} \cM ^\theta _k (W_{g, 1}, \ell_{W_{g,1}})\right) \simeq 
  \left(\hocolim \limits_{g \to  \infty} \cM ^\theta _k (W_{g,1}, \ell_{W_{g,1}})\right)^+ \simeq
  \Omega ^\infty _0 {\bf  MT} \theta.
\end{equation}

%\vskip .2in
\begin{rmk}
  Maps $\cM ^\theta _k (W_{g,1+1}, \ell_{W_{g,1+1}}) \to \Omega ^\infty {\bf MT} \theta$ which are compatible with gluing (a strictly monoidal structure on the disjoint union over all $g$) were constructed in \cite {GMTW}.
  Following ideas in \cite {Til97}, a map of strict monoids can be constructed
  \[
    \coprod _{g\geq 0} \cM ^\theta _k (W_{g, 1}, \ell_{W_{g,1}}) \longrightarrow 
    \coprod _{g\geq 0} \cM ^\theta _k (W_{g, 1+1}, \ell_{W_{g,1+1}})
  \]
  which on composition and group completion induces the homotopy equivalence on a connected component in \eqref{mtidentification}.
  We  do not know whether the evident infinite loop space structure on
  $\Omega ^\infty_0 {\bf MT} \theta$ is compatible under this map with the infinite loop space structure produced by our methods on 
  the left hand term in \eqref{mtidentification}  though we note that for the case $2k=2$ this was affirmed by Wahl in \cite {Wahl04}.
\end{rmk}

\subsection{Manifold models of \texorpdfstring{$\protect A_\infty$}{A}-operads}
For each dimension $d\geq 2$ we will construct an $A_\infty$-operad $\cA^d$ build from $d$-dimensional spheres and diffeomorphisms.

Let $S\colon I \sqcup I \to I$ and $R\colon \emptyset \to I$ be  fixed cobordisms from two copies of the unit interval $I$ and the empty set, respectively, to $I$.  See \autoref{fig:tiles}.
By gluing  copies of $S$ and $R$ as cobordisms along incoming and outgoing boundary intervals, and labeling their 
incoming boundary components, we can construct cobordisms $T$
from $n$ copies of $I$ to $I$. We denote the union of the $n+1$ copies of $I$ in the boundary by $\partial_0$. 

\begin{figure}
\begin{tikzpicture}
  \draw [ultra thick] (0,.5) to (0, 1.5);
  \draw (0,.5) to [out=0,in=-90] (.7,1)  to [out=90,in=0]  (0,1.5);
   \node at (.25, -.5){$R$};

  \draw [ultra thick] (2.3,.5) to (2.3, 1.5) ;
  \draw [ultra thick] (4.5,-.5) to (4.5, .5);
  \draw [ultra thick] (4.5,1.5) to (4.5, 2.5);
  \draw (2.3,1.5) to [out=5,in=180]  (4.5,2.5);
  \draw (2.3,.5) to [out=-5,in=180]  (4.5,-.5);
  \draw (4.5,1.5) to [out=200,in=80] (3.75,1)  to [out=-80,in=150]  (4.5,.5);
  \node at (3, -.5){$S$};
\end{tikzpicture}
\caption{Tiles}\label{fig:tiles}
\end{figure}

Let $\cT_n$ be the groupoid whose objects are all such cobordisms and whose morphisms from $T$ to $\tilde T$ are the diffeomorphisms
from $T$ to $\tilde T$ that commute with the parametrizations of the $n$ labeled
incoming and the outgoing copies of $I$:
\[
\cT_n (T, \tilde T) = \Diff (T, \tilde T; \partial_0).
\]
We note that each of these morphism spaces is homotopy equivalent to the space of diffeomorphisms of the two-dimensional disk $D^2$ that fix the boundary, and hence is contractible. The symmetric group $\Sigma_n$ acts on $\cT _n$ by relabeling the incoming boundary components.
Gluing of incoming to outgoing copies of $I$ makes  $\cT = \{ \cT_n\}_{n\geq 0}$  an operad in categories (enriched in spaces), and its classifying space $B\cT = \{ B\cT_n\}_{n\geq 0}$ an operad 
in spaces. Because the morphism spaces $\cT_n (T, \tilde T)$ are contractible, so is
$B\cT_n$. We summarize with the following lemma.

\begin{lem}
$B\cT$ is an $A_\infty$-operad.
\qed 
\end{lem}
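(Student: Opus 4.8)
The plan is to verify the two defining properties of an $A_\infty$-operad from the discussion preceding \autoref{little_cubes}: namely, that the symmetric groups act freely levelwise, and that there is a levelwise $\Sigma_n$-equivariant homotopy equivalence $B\cT \to \As$. The core input is already recorded in the preceding paragraph: each morphism space $\cT_n(T,\tilde T) = \Diff(T,\tilde T;\partial_0)$ is either empty or homotopy equivalent to the contractible group $\Diff(D^2;\partial)$, by the classical fact (Smale) that $\Diff(D^2;\partial)$ is contractible, together with the observation that any cobordism $T$ built from copies of $S$ and $R$ with $n$ incoming and one outgoing copy of $I$ is diffeomorphic rel $\partial_0$ to a disk $D^2$ with its boundary suitably parametrized.

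The first step is to pin down $\pi_0$ of the groupoid $\cT_n$ together with its $\Sigma_n$-action. Here I would argue that all objects of $\cT_n$ are isomorphic — any two cobordisms built from $S$ and $R$ with the same number of labeled incoming boundary intervals and one outgoing interval are related by a boundary-fixing diffeomorphism, since both are disks with matching boundary parametrizations — so $\cT_n$ is a connected groupoid, and hence $B\cT_n$ is connected with $\pi_1(B\cT_n)$ the (discrete) automorphism group of any fixed object $T$. The relabeling action of $\Sigma_n$ permutes the $n$ incoming copies of $I$; following the model for the surface operad in \cite{T, Tillmann99}, the automorphisms of $T$ rel $\partial_0$ that arise as composites of the relabeling diffeomorphisms realize exactly the group $\Sigma_n$, and this identifies $\pi_0$ of the translation-groupoid description with a free $\Sigma_n$-orbit. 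Thus levelwise $\pi_0(B\cT_n) \cong \Sigma_n = \As(n)$ with the correct equivariance, and the $\Sigma_n$-action on this set is free.

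The second step is to upgrade this to the claimed homotopy equivalence of operads. Since each $\cT_n(T,\tilde T)$ is contractible and $\cT_n$ is a connected groupoid, the canonical map $B\cT_n \to B\pi_0(\cT_n) = \As(n)$ (collapsing the contractible morphism spaces) is a homotopy equivalence; more precisely, $B\cT_n$ is a disjoint union, over the $\Sigma_n$-many components, of classifying spaces $B\Aut(T) \simeq B\mathrm{pt}$ of contractible topological groups, hence each component is contractible and $B\cT_n \simeq \Sigma_n$. One then checks this collapse map commutes with the operad structure maps (gluing of copies of $I$ corresponds to block sum of permutations under the $\pi_0$-identification) and with the $\Sigma_n$-actions, so it assembles to a levelwise $\Sigma_n$-equivariant homotopy equivalence $B\cT \to \As$ of operads. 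That $B\cT$ is an operad in spaces is already asserted in the excerpt, so this is all that remains.

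I expect the main obstacle to be the careful bookkeeping in the second step: verifying that the gluing operation on tile cobordisms induces precisely the block-sum structure maps of $\As$ on components, and that the relabeling $\Sigma_n$-action on $\cT_n$ induces the standard right $\Sigma_n$-action on $\As(n) = \Sigma_n$ — in particular that no orientation or framing subtlety of the two-dimensional tiles introduces extra automorphisms beyond the relabeling ones. This is exactly the kind of argument carried out for the surface operad in \cite{T}, so I would cite that precedent rather than redo it in full, and the remaining verifications (contractibility of morphism spaces, connectedness of $\cT_n$, equivariance of the collapse map) are routine.
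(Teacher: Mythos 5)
Your proof contains an internal contradiction that needs to be resolved. In the first step you assert that $\cT_n$ is a connected groupoid ("all objects of $\cT_n$ are isomorphic"), which would make $B\cT_n$ a single contractible component; in the second step you (correctly) conclude that $B\cT_n$ is a disjoint union of $n!$ contractible components, $\Sigma_n$-equivariantly equivalent to $\As(n)=\Sigma_n$. These cannot both hold. The false step is the claim that any two objects of $\cT_n$ are related by a diffeomorphism respecting the boundary parametrizations. Each $T$ is a disk with $n+1$ disjoint parametrized arcs on its boundary, and a diffeomorphism that preserves the parametrization of even one such arc is automatically orientation-preserving (at a point of the arc it preserves the tangent direction to the arc and the inward normal), hence preserves the cyclic order of the arcs along $\partial T$. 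Reading that cyclic order against the labeling, with the outgoing arc as basepoint, is a complete isomorphism invariant, and $\pi_0(\cT_n)\cong\Sigma_n$ as a free, transitive $\Sigma_n$-set. The "relabeling diffeomorphisms" you invoke are not automorphisms of a fixed $T$; they are isomorphisms $T\to\sigma T$ between distinct objects and witness the transitivity of the $\Sigma_n$-action on $\pi_0(\cT_n)$, not a nontrivial fundamental group. (Indeed $\Diff(T,T;\partial_0)\simeq\Diff(D^2;\partial)$ is contractible, so each component of $B\cT_n$ is a point up to homotopy; $\pi_1$ contributes nothing.)

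This is not a cosmetic issue: if $\cT_n$ were connected, $B\cT_n$ would be a single contractible space carrying a free $\Sigma_n$-action, and for $n\geq 2$ there is no $\Sigma_n$-equivariant map from a connected space to $\As(n)=\Sigma_n$ at all, since such a map must be constant yet $\Sigma_n$ has no fixed points under right translation. So your first step, taken at face value, would make the $A_\infty$ conclusion impossible. The paper's own justification is terse and similarly compressed — the sentence "Because the morphism spaces $\cT_n(T,\tilde T)$ are contractible, so is $B\cT_n$" should be read as "contractible or empty" and "each component of $B\cT_n$" — but the intended argument is exactly what your second step records: $\pi_0(\cT_n)\cong\Sigma_n$ via the cyclic-order invariant with the regular $\Sigma_n$-action, and all nonempty morphism spaces are contractible, hence $B\cT_n\simeq\Sigma_n$ equivariantly. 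Replace the connectedness claim with this $\pi_0$ computation; the remainder of your argument (gluing corresponds to block sum, relabeling corresponds to the right regular action, and the collapse map $B\cT\to\As$ is a map of operads) then goes through as you describe.
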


Next, for $d\geq 2$, consider for each tile $T \in \cT_n$ its thickening,
the space 
\[
T^d=T \times I^{d-1}.
\]
As $T$ is homeomorphic to the disk $D^2$,
$T^d$ is homeomorphic to the $d+1$ dimensional disk, and its boundary 
$\partial T^d$ is
homeomorphic to the sphere $S^d$. Furthermore, 
\[
\partial T^d = \partial T \times I^{d-1} \cup T \times \partial I^{d-1}
\] 
comes with $n$ incoming and one outgoing cubes $I^d$ given by
$\partial _0 \times I^{d-1}$. Define a new 
groupoid $\cT ^d_n$ with objects 
$\partial T^d$ for each tile $T \in \cT _n$, 
and morphisms $\cT^d_n (\partial T^d, \partial \tilde T^d)$ to be all
diffeomorphisms that are the restrictions to the boundary of diffeomorphims from $T^d$ to $\tilde T^d$ that have the form $\phi \times \id_{I^{d-1}}$ for some diffeomorphism of tiles   $\phi \in 
\cT (T, \tilde T)$.
Taking connected sum along the fixed cubes $I^d$ defines the structure maps of the operad.
By construction, $\cT_n$ is isomorphic to $\cT^d_n$ as a category enriched in spaces. 
Define 
\[\cA^d \coloneqq B\cT^d = \{ B\cT^d_n\}_{n\geq 0}. 
\]

\begin{cor}
  $\cA^d $ is an $A_\infty$-operad. \qed
\end{cor}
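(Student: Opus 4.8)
The plan is to promote the isomorphism $\cT_n \cong \cT^d_n$ of categories enriched in spaces, noted just above, to an isomorphism of operads, and then to invoke the preceding lemma. The relevant functor sends a tile $T\in\cT_n$ to $\partial T^d = \partial(T\times I^{d-1})$ and a diffeomorphism $\phi$ of tiles to the boundary restriction of $\phi\times\mathrm{id}_{I^{d-1}}$; by construction this is, level by level, an isomorphism of groupoids enriched in spaces. What remains is to check that, assembled over all $n$, it is compatible with the symmetric group actions, with the distinguished element $1$, with the basepoint $\ast$, and with the structure maps, so that applying $B(-)$ yields an isomorphism of operads in spaces $B\cT \xrightarrow{\ \cong\ } B\cT^d = \cA^d$.

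First I would record the equivariance: the action of $\sigma\in\Sigma_n$ on $\cT_n$ relabels the $n$ incoming copies of $I$ that make up part of $\partial_0$, and under thickening these become the incoming cubes $\partial_0\times I^{d-1}\subset\partial T^d$, which carry precisely the corresponding relabeling; hence the functor is $\Sigma_n$-equivariant. The unit tile of $\cT_1$ (the trivial cobordism $I\to I$) thickens to a cube, which is the unit for connected sum, and the distinguished object of $\cT_0$ thickens to that of $\cT^d_0$, so $1$ and $\ast$ are preserved.

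The one step that will need a small geometric argument is compatibility with composition. Gluing tiles along a boundary interval commutes with thickening: if $T'' = T\cup_I T'$ is obtained by attaching the outgoing $I$ of $T'$ to an incoming $I$ of $T$, then $T''\times I^{d-1} = (T\times I^{d-1})\cup_{I\times I^{d-1}}(T'\times I^{d-1})$. Applying the elementary identity $\partial(A\cup_C B) = (\partial A\setminus\mathrm{int}\,C)\cup_{\partial C}(\partial B\setminus\mathrm{int}\,C)$ for a codimension-zero gluing region $C$ (here $C = I\times I^{d-1} = I^d$) then identifies $\partial(T''\times I^{d-1})$ with the connected sum of the boundary spheres $\partial T^d$ and $\partial(T')^d$ along the matching cubes $I^d$, which is exactly the operation defining the structure maps of $\cT^d$. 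Iterating this over the several inputs of $\gamma$ gives the square relating $\gamma_{\cT}$ and $\gamma_{\cT^d}$.

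With the isomorphism of operads in hand, the corollary follows immediately: by the preceding lemma there is a levelwise $\Sigma_n$-equivariant homotopy equivalence $B\cT\to\As$, and precomposing it with $\cA^d = B\cT^d \xrightarrow{\ \cong\ } B\cT$ exhibits $\cA^d$ as an $A_\infty$-operad. The main obstacle, in an otherwise formal argument, will be the verification in the previous paragraph that thickening intertwines the two superficially different compositions, namely gluing two-dimensional tiles along intervals versus connected sum of boundary spheres along cubes. This is corner-smoothing bookkeeping for manifolds with corners, together with keeping careful track of the fixed cube charts $\partial_0\times I^{d-1}$; once these identifications are fixed, the two operad structures agree on the nose and nothing deeper is involved.
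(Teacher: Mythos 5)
Your proposal is correct and follows the same route the paper takes implicitly: the paper records the isomorphism $\cT_n \cong \cT^d_n$ of enriched categories and treats the corollary as an immediate consequence of the preceding lemma that $B\cT$ is an $A_\infty$-operad, leaving the operadic compatibilities (equivariance, unit, basepoint, and that thickening intertwines interval-gluing with connected sum along cubes, up to corner smoothing) unspoken; you have simply spelled them out.
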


Now let $d=2k$ and identify the thickening of the atomic tiles $S$ and $R$ with the atomic $2k$-cobordisms $M$ and $D$.
We want to extend this to a map of operads
\[
  \mu\colon \cA^{2k} \longrightarrow \cW^{2k}.
\]
This can easily be done when the $\theta$-structure is  as in \autoref{eg:BOk}. 
For more general $\theta$-structures the operad $\cA^{2k}$ needs to be adapted as follows.
If $\ell_Q$ extends to $\ell_M$ and $\ell_D$ on $M$ and $D$ respectively, then each cobordism $W$ built from these will have an operadic  $\theta$-structure glued together from these, and hence each of our objects $\partial T^d$ has such an operadic $\theta$-structure.   
We thus need to thicken the object space  in $\cA^d$. For each tile $T$ we have   a family of objects  $(\partial T ^d , \ell)$ where $\ell$ is the pullback of the operadic $\theta$-structure  along a  diffeomorphisms of $\partial T^d$ in 
$\cA ^d$. But as the space of all such diffeomorphisms is contractible, the homotopy type  of the operad is not affected, and in particular the resulting operad is still an $A_\infty$-operad.
Finally we note that as $ \cW^{2k} (2)$ is path-connected, $\mu (\cA^{2k}(2)) $ is contained in one path-component and hence $\mu$ satisfies the homotopy commutativity  condition \ref{htpycom}.

\begin{rmk}Strictly speaking, the thickening $T^d$ is not smooth and 
  care needs to be taken when identifying $\partial T^d$ with $S^d$.  In particular, we want
the maps $\phi \times \id_{I^d}$ to  produce diffeomorphisms on $S^d$. This can easily be achieved by considering only those diffeomorphisms from $T$ to $\tilde T$ that restrict
on a collar $\partial _f \times [0, \epsilon)$ 
of the free boundary $\partial _f =\partial T \setminus \partial_0$
to diffeomorphisms of the form $\alpha \times \id_{[0, \epsilon)}$ for some 
smooth $\alpha \colon \partial _f (T ) \to \partial _f (\tilde T)$.  
We leave the details to the dedicated reader.
\end{rmk}

\subsection{Diffeomorphism groups for other tangential structures.}
We will show that the diffeomorphism  and  mapping class groups of $W_{g,1}$ also give rise to infinite loop spaces.
We first construct a $\cW^{2k}$-algebra consisting of classifying spaces of diffeomorphisms groups. 

We define a category $\cD _g$ that is entirely analogous to $\cC_{g,0}$ in \S\ref{sec:mfld_operad}. 
For each $W$ built out of atomic $2k$-cobordisms of type $W_{g,1}$ the categeory  $\cD_g$ has  one object, and its morphisms are diffeomorphisms that fix the parametrization of the boundary. 
Let $F\colon\cC_{g,0} \to \cD_g$ be the functor that forgets the $\theta$-structure on objects. By the definition of $\cD_g$ this map is surjective. As $\cD_g$ is a groupoid, its classifying space is 
\[
B\cD_g \simeq B\Diff(W_{g,1}; \partial).
\]
The forgetful functor induces a map 
\[
BF\colon B\cC_{g,0} \to B \cD_g
\]
which, under the above homotopy equivalence and \autoref{lem:theta_classifying}, corresponds naturally to the
bundle map 
\[
X_{W_{g,1}} // \Diff (W_{g,1}; \partial) \to B\Diff (W_{g,1}; \partial).
\]
As $F$ commutes with gluing, it gives $\coprod_{g\geq 0}B\cD_g$
the structure of an algebra over  $\cW^{2k}$. 

\begin{cor}
The space $\mathbb Z \times  (\hocolim _{g\to \infty}B\Diff (W_{g,1})) ^+$
is an infinite loop space.
\end{cor}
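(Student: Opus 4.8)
The plan is to recognize the space in question as the group completion of a suitable $\cW^{2k}$-algebra and then let the main theorem supply the infinite loop structure. First I would work with the $\cW^{2k}$-algebra
\[
A \coloneqq \coprod_{g\geq 0} B\cD_g,
\]
constructed just above, whose $g$-th component is equivalent to $B\Diff(W_{g,1};\partial)$ because $\cD_g$ is a groupoid equivalent to the one-object groupoid on $\Diff(W_{g,1};\partial)$. Since $\cW^{2k}$ is an OHS for $2k\geq 2$ by \autoref{thm:even_mfld_stability}, \autoref{loopsandaction} immediately gives that $\cG(A)$ is an $\Omega^\infty$-space; so the corollary reduces to the homotopy identification
\[
\cG(A)\;\simeq\;\mathbb Z\times\Bigl(\hocolim_{g\to\infty} B\Diff(W_{g,1};\partial)\Bigr)^+.
\]

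For this I would invoke the group completion theorem \autoref{thm:gpcompletion} and its corollary. By \autoref{assocthm} the rectified monoid $\bvM_{\cA^{2k}}(A)$ has $\pi_0$ equal to the genus monoid $\mathbb N$, which is finitely generated, and its multiplication is homotopy commutative since $\cW^{2k}$ satisfies condition \ref{htpycom} (this was checked when constructing $\mu\colon\cA^{2k}\to\cW^{2k}$, using that $\cW^{2k}(2)$ is path-connected), so by \autoref{htpycom_detail} the corollary to \autoref{thm:gpcompletion} applies and yields $(\cG A)_{\id}\simeq (A_\infty)^+$, where $A_\infty=(\hocolim_{\tilde s}A)_{\id}$. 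Here $s$ is the genus-one generator of $\mathbb N$, and by \autoref{rmk:propagator2} the propagator $\tilde s$ acts on $A$, up to homotopy, as multiplication by a genus-one element $\tilde s_0\in A_1$, i.e. as the handle-attaching stabilization map $B\Diff(W_{g,1};\partial)\to B\Diff(W_{g+1,1};\partial)$; hence $A_\infty\simeq\hocolim_{g\to\infty}B\Diff(W_{g,1};\partial)$. Finally $\cG A$ is a loop space, hence group-like, so all of its path components are homotopy equivalent, and $\pi_0(\cG A)$ is the group completion $\mathbb Z$ of $\mathbb N$; therefore $\cG A\simeq\mathbb Z\times(A_\infty)^+$, which is an infinite loop space.

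The one step that seems to need genuine care is matching the operadic propagator $\tilde s$ with the geometric genus-stabilization map, so that $A_\infty$ really is the homotopy colimit appearing in the statement; this is where \autoref{rmk:propagator2} and the explicit description of the atomic cobordisms $M$, $H$, $D$ are needed. A related point worth spelling out is that one here uses the infinite-genus form of homological stability, \autoref{thm:grw:add}, which holds for all $2k\geq 2$, rather than the finite-range statement \autoref{thm:grw} --- exactly the flexibility built into the definition of an OHS.
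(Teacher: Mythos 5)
Your proof is correct and follows essentially the same route as the paper: recognize $\coprod_{g\geq 0} B\cD_g$ as a $\cW^{2k}$-algebra, invoke \autoref{thm:even_mfld_stability} and \autoref{loopsandaction} for the $\Omega^\infty$-structure on the group completion, and then use \autoref{htpycom_detail} (the corollary to the group completion theorem) to identify that group completion with $\mathbb Z\times(\hocolim_g B\Diff(W_{g,1};\partial))^+$. You spell out the propagator/stabilization identification in more detail than the paper, but the argument is the same; the only small thing the paper makes explicit that you could add is that one first fixes the operadic $\theta$-structure of \autoref{eg:BOk}, which is what makes $\cW^{2k}_g(n)\simeq B\Diff(W_{g,n+1};\partial)$ and lets $\coprod_g B\cD_g$ become a $\cW^{2k}$-algebra via the forgetful functors.
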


\begin{proof}
Pick an operadic $\theta$-structure on $Q = S^{2k-1}$, as in \autoref{eg:BOk},
and consider the associated operad $\cW^{2k}$. We have argued above that 
\[
\coprod _{g\geq 0} B \cD_g
\] 
is an algebra over  $\cW^{2k}.$
By \autoref{thm:even_mfld_stability}, $\cW^{2k}$  is an operad with homological stability (OHS) and hence by our main result, \autoref{loopsandaction}, the group completion
of any $\cW^{2k}$-algebra is an infinite loop space. By \autoref{htpycom_detail}, the group completion is given by the plus construction on the homotopy colimit,
as the underlying monoid is homotopy commutative.
\end{proof}

More generally, 
the forgetful maps to diffeomorphism groups factor through the orientation preserving diffeomorphism groups. Thus we can also consider the oriented analog $\mathcal D^+_g$ of $\mathcal D_g$.
Galatius and Randal-Williams \cite {GRWII} identify the associated limit spaces in these cases with certain homotopy quotients
\[
(\Omega ^\infty {\bf MT} \theta)//\hAut (u)
\quad \quad \text { and } \quad \quad
(\Omega ^\infty {\bf MT} \theta)// \hAut (u^+)
\]
of $\Omega ^\infty {\bf MT} \theta$ by the monoid of weak homotopy equivalences of the $k$-connected cover $B$ of $BO(2k)$ which commute with the maps
 $u\colon B\to BO(2k)$ and $u^+\colon B\to BSO(2k)$ respectively.
We note here that there is no a priori reason why these quotient spaces are infinite loop spaces. 

Other tangential structures are also considered in \cite{GRWII} and could be considered similarly.

\begin{rmk}\label{enlarge_Dg}
The category $\cD_g$ (and hence $\pi_0 \cD_g$ considered below) could be enlarged by introducing further atomic manifolds $A_\alpha \colon Q \to Q$ with $\alpha$ from some countable set $S$. 
The union of the  associated classifying spaces form $\cW^{2k} $-algebras and hence group complete to infinite loop spaces.
\end{rmk}

\subsection{Mapping class groups}

Let $\Gamma W _{g} \coloneqq \pi_0 (\Diff (W_{g,1};\partial ))$ denote the mapping class group of $W_{g,1} $.
By replacing the diffeomorphism groups by their connected components we can construct  a new  category $\pi_0 \cD_g$ with a canonical forgetful functor $\cD_g \to \pi_0 \cD_g$ so that 
\[
\coprod_{g\geq 0} B\pi_0 \cD_g
\]
is an algebra over
$\cW^{2g}$. We note the homotopy equivalence $B\pi_0 \cD_g \simeq B\Gamma W_{g, 1}$.

\begin{cor}
The space $\mathbb Z \times  (\hocolim _{g\to \infty}B\Gamma W_{g,1}) ^+$
is an infinite loop space.
\end{cor}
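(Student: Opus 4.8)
The plan is to mimic verbatim the proof of the preceding corollary (on $B\Diff(W_{g,1})$), replacing $\cD_g$ by $\pi_0\cD_g$. First I would fix an operadic $\theta$-structure on $Q = S^{2k-1}$, as in \autoref{eg:BOk}, and recall from the construction just above that the forgetful functor $\cD_g \to \pi_0\cD_g$ is compatible with the gluing maps, so that $M \coloneqq \coprod_{g\geq 0} B\pi_0\cD_g$ inherits from $\coprod_{g\geq 0} B\cD_g$ the structure of an algebra over the operad $\cW^{2k}$; I would also record the homotopy equivalence $B\pi_0\cD_g \simeq B\Gamma W_{g,1}$.

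Next I would feed in the two main inputs. By \autoref{thm:even_mfld_stability}, $\cW^{2k}$ is an operad with homological stability for $2k\geq 2$, so \autoref{loopsandaction} applies and shows that the group completion $\g M$ of the $\cW^{2k}$-algebra $M$ is an $\Omega^\infty$-space. This already produces the infinite loop space structure; what remains is to identify $\g M$ with the space in the statement.

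For that identification I would invoke the group completion theorem. Since $\cW^{2k}$ satisfies the weak homotopy commutativity condition \ref{htpycom}, the monoid $M$ is homotopy commutative (\autoref{htpycom_detail}), and its $\pi_0$ is the free monoid $\mathbb N$ generated by the genus-one class (each $B\pi_0\cD_g$ being connected); hence \autoref{thm:gpcompletion} and its corollary apply. Then \eqref{gpcompletioniso} together with the plus-construction corollary identify the identity component of $\g M$ with $M_\infty^+ = \bigl(\hocolim_{g\to\infty} B\pi_0\cD_g\bigr)^+$, the homotopy colimit being formed along the maps that attach a handle (boundary-connect-sum with the genus-one piece $\tilde s_0$ of \autoref{rmk:propagator2}), while $\pi_0(\g M) \cong \mathbb Z$. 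Combining this with $B\pi_0\cD_g \simeq B\Gamma W_{g,1}$ yields $\g M \simeq \mathbb Z \times \bigl(\hocolim_{g\to\infty} B\Gamma W_{g,1}\bigr)^+$, which is therefore an infinite loop space.

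This corollary is a formal consequence of the machinery already assembled, so I do not expect a serious obstacle. The only points needing (routine) verification are the two flagged above: that $\coprod_{g} B\pi_0\cD_g$ genuinely carries a $\cW^{2k}$-algebra structure compatible with the forgetful map from $\coprod_{g} B\cD_g$ — which holds because passing to path components commutes with the gluing of cobordisms — and that each $B\pi_0\cD_g$ is connected, so that the component monoid of the group completion is $\mathbb Z$ rather than something coarser; the latter follows since $\cD_g$, and hence $\pi_0\cD_g$, is a connected groupoid.
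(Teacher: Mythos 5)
Your proposal is correct and follows exactly the paper's approach: the paper's proof of this corollary literally reads ``The argument is precisely as above,'' referring to the proof of the $B\Diff(W_{g,1})$ corollary, and your proposal is that argument spelled out with $\cD_g$ replaced by $\pi_0\cD_g$. The extra verifications you flag (that $\coprod_g B\pi_0\cD_g$ is a $\cW^{2k}$-algebra, that each $B\pi_0\cD_g$ is connected so $\pi_0$ of the monoid is $\mathbb N$) are exactly the points the paper handles in the sentences immediately preceding the corollary.
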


\begin{proof}
The argument is precisely as above.
\end{proof}

%\begin{rmk}
Mapping class groups of surfaces and three manifolds are well-studied and notoriously complicated. However, in higher dimensions these mapping class groups  tend to be much better behaved. In the next section we will identify the above infinite loop space with a K-theory space when $2k=4$. Nevertheless, as we will argue,  the above corollary and \autoref{enlarge_Dg} provide us with many examples of infinite loop spaces where at the moment we do not have any other means of identifying them as such.
%\end{rmk}

\subsection{The map to \texorpdfstring{$K$}{K}-theory}
Our final goal is to construct the map from $\Omega ^\infty \text{\bf MT} \theta$ to $K$-theory as an infinite loop space map.

Diffeomorphisms act on homology. In particular, for $W_{g,1}$ of dimension $2k$ the diffeomorphisms act on the middle dimensional homology group
\[H_k(W_{g,1}) \simeq \mathbb Z ^{2g} \simeq H_k (W_g)\] 
preserving the intersection form.
Define categories $\mathcal K _{g}$ with objects the same as in $\cD_g$ and morphisms from $W$ to $W'$ given by
linear isomorphisms  $H_k (W) \to H_k(W')$ that preserve the natural intersection forms. Thus when $k$ is odd, $B\mathcal K_g \simeq B\text{Sp}_{2g} \mathbb Z$, and when $k$ is even,  $B\mathcal K_g \simeq B O_{g,g} \mathbb Z$. There is a chain of  functors
\[
\cC_{g,0} \longrightarrow \cD_{g} \longrightarrow \pi_0 \cD_g \longrightarrow \mathcal K_g
\]
giving rise to a chain of $\cW^{2k}$-algebra maps on the union over all $g$ of the classifying spaces, and hence maps of infinite loop spaces on their group completions.
We summarize with the following theorem.

\begin{thm}\label{infmap}
For $2k\geq 2$, a fibration $\theta \colon B \to BO(2k)$ with $B$ $k$-connected, and an operadic choice of 
$\theta$-structure on $Q= S^{2k-1}$,
the  action of the diffeomorphisms on the middle dimensional homology of the underlying manifolds induces a map of infinite loop spaces from
\[
\mathbb Z \times (\hocolim\limits_{g \to \infty} \cM^\theta_k (W_{g,1}, \ell_{W_{g,1}}))^+ \simeq \Omega ^\infty {\bf MT} \theta
\]
factoring through the infinite loop spaces
\[
\mathbb Z \times (\hocolim _{g\to \infty} B\Diff (W_{g,1}, \partial ))^+ \text {  and  } \,\,
\mathbb Z \times (\hocolim _{g\to \infty} B \Gamma W_{g,1})^+
\]
to algebraic $K$-theory, $K\text{Sp} \simeq \mathbb Z \times B\text{Sp}  (\mathbb Z)^+$ when $k$ is odd, and 
$K \text O \simeq \mathbb Z \times B \text {O}_{\infty, \infty} (\mathbb Z)^+$ when $k$ is even. 
\qed 
\end{thm}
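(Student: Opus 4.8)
The plan is to deduce the statement by feeding the chain of functors
\[
  \cC_{g,0} \longrightarrow \cD_g \longrightarrow \pi_0\cD_g \longrightarrow \mathcal{K}_g
\]
through the group completion functor $\g$ of \autoref{loopsandaction} and then identifying the four resulting $\Omega^\infty$-spaces. The first task is to promote this chain to a chain of $\cW^{2k}$-algebra maps on the disjoint unions $\coprod_g B(-)_g$. The functors $\cC_{g,0}\to\cD_g\to\pi_0\cD_g$ are the identity on the underlying cobordisms and diffeomorphisms (they only forget the $\theta$-structure, then pass to $\pi_0$ on morphisms), so they visibly commute with the operadic gluing of atomic pieces and hence give maps of $\cW^{2k}$-algebras. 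For $\pi_0\cD_g\to\mathcal{K}_g$, which records the action of a mapping class on $H_k$, one must check that gluing a cobordism of type $W_{h,n+1}$ onto manifolds $W_{g_i,1}$ along their boundary $(2k-1)$-spheres realizes the orthogonal direct sum $H_k(W_{h,n+1})\oplus\bigoplus_i H_k(W_{g_i,1})$ of intersection forms; for $2k\geq 4$ this is a Mayer--Vietoris computation using that $S^{2k-1}$ carries no homology in degrees $k-1$ and $k$, while the case $2k=2$ is the classical surface computation, where the single--boundary--component normalization absorbs the circle. The same reasoning shows the genus--one propagator $\tilde s$ acts on $\coprod_g B\mathcal{K}_g$ as stabilization of the form by a hyperbolic plane $\mathbb{Z}^2$ (via \autoref{rmk:propagator2}, multiplication by the class of $W_{1,1}$, whose middle homology is hyperbolic of rank $2$). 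Granting this, $\coprod_g B\mathcal{K}_g$ is a $\cW^{2k}$-algebra and the whole chain consists of $\cW^{2k}$-algebra maps.

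By \autoref{thm:even_mfld_stability} the operad $\cW^{2k}$ is an OHS, so \autoref{loopsandaction} applies and $\g$ sends $\cW^{2k}$-algebras to $\Omega^\infty$-spaces and $\cW^{2k}$-algebra maps to maps of $\Omega^\infty$-spaces. Applying $\g$ to the chain above gives a chain of infinite loop maps
\begin{gather*}
  \g\left(\coprod_g B\cC_{g,0}\right) \longrightarrow
  \g\left(\coprod_g B\cD_g\right) \longrightarrow \\
  \g\left(\coprod_g B\pi_0\cD_g\right) \longrightarrow
  \g\left(\coprod_g B\mathcal{K}_g\right).
\end{gather*}
Each of these $\cW^{2k}$-algebras is homotopy commutative by \autoref{htpycom_detail}, and $\pi_0$ of each is the genus monoid $\mathbb{N}$, which is finitely generated; hence by the group completion theorem \autoref{thm:gpcompletion} and its corollary, for any such algebra $M$ one has $(\g M)_{\mathrm{id}}\simeq M_\infty^+$, and since $\g M$ is a loop space all of whose components are homotopy equivalent with $\pi_0(\g M)\cong\mathbb{Z}$, we obtain $\g M\simeq \mathbb{Z}\times M_\infty^+$, where $M_\infty$ is the identity component of the telescope of $M$ along $\tilde s$.

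It remains to identify $M_\infty$ in the four cases. By \autoref{lem:theta_classifying} we have $(\coprod_g B\cC_{g,0})_\infty=\hocolim_{g\to\infty}\cM^\theta_k(W_{g,1},\ell_{W_{g,1}})$, so the first term is $\mathbb{Z}\times(\hocolim_{g\to\infty}\cM^\theta_k(W_{g,1},\ell_{W_{g,1}}))^+\simeq\Omega^\infty{\bf MT}\theta$ by \eqref{mtidentification}. Since $B\cD_g\simeq B\Diff(W_{g,1};\partial)$ and $B\pi_0\cD_g\simeq B\Gamma W_{g,1}$, the second and third terms are $\mathbb{Z}\times(\hocolim_{g\to\infty}B\Diff(W_{g,1};\partial))^+$ and $\mathbb{Z}\times(\hocolim_{g\to\infty}B\Gamma W_{g,1})^+$, through which the total map factors. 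Finally, since the propagator stabilizes by a hyperbolic plane, $(\coprod_g B\mathcal{K}_g)_\infty=\hocolim_{g\to\infty}B\mathcal{K}_g$ is $B\mathrm{Sp}(\mathbb{Z})$ when $k$ is odd and $BO_{\infty,\infty}(\mathbb{Z})$ when $k$ is even, so together with the $\mathbb{Z}$ factor it is, by the Quillen plus--construction definition of symplectic, resp.\ split orthogonal, algebraic $K$-theory, $K\mathrm{Sp}\simeq\mathbb{Z}\times B\mathrm{Sp}(\mathbb{Z})^+$, resp.\ $K\mathrm{O}\simeq\mathbb{Z}\times BO_{\infty,\infty}(\mathbb{Z})^+$. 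I expect the main obstacle to be the first step: verifying that the homology action is natural for the operad action, i.e.\ the Mayer--Vietoris identification of operadic gluing with the orthogonal sum of intersection forms and of the propagator with hyperbolic stabilization; everything after that is a direct application of \autoref{loopsandaction}, \autoref{thm:gpcompletion} and its corollary, and \eqref{mtidentification}.
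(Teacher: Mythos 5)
Your proof is correct and takes the same approach as the paper, which simply exhibits the chain of functors $\cC_{g,0}\to\cD_g\to\pi_0\cD_g\to\mathcal{K}_g$ as a chain of $\cW^{2k}$-algebra maps and invokes \autoref{loopsandaction}. You supply the two verifications the paper leaves implicit — the Mayer--Vietoris identification of operadic gluing with orthogonal sum of intersection forms (including the correct observation that for $2k=2$ the boundary circle is null-homologous, and that the propagator acts as hyperbolic stabilization) and the identification $\g M\simeq\mathbb{Z}\times M_\infty^+$ via the group completion theorem — both of which are sound.
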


We finish this section with a discussion of the map from the infinite loop space associated to the mapping class groups of the $W^{2k}_{g,1}$ to $K$-theory.

For $2k=4$: the stable mapping class group 
\[\Gamma W^{2k}_\infty \coloneqq \lim _g \Gamma W^{2k}_{g,1}
\] 
is isomorphic to the group $O_{\infty, \infty} (\mathbb Z)$. Indeed, following \cite {Giansiracusa},
we can give the following argument. By a theorem of Wall  \cite{Wall}, the map 
\[\Gamma W^4_{g,1} \to O_{g,g}(\mathbb Z)\] is surjective for all $g\geq1$. This map factors through the group of pseudo-isotopy classes $\mathcal P (W^4_{g,1})$, and by a theorem of Kreck \cite {Kreck} the map 
\[\mathcal P(W^4_{g,1}) \to O_{g,g} (\mathbb Z)\] is injective. Finally, by a theorem of Quinn \cite {Quinn},
pseudo-isotopic diffeomorphisms will be isotopic after stabilization with a number of copies of $S^2 \times S^2$. 

For $2k>4$:
While both Wall's and Quinn's theorems are restricted to dimension four, a version of Kreck's theorem holds in all even dimensions, and the mapping class group of $W^{2k}_{g,1}$ is an extension of the 
automorphism group of the quadratic forms $Q_{W^{2k}_{g}}$ associated to the bilinear intersection form on $H_k (W_{g}^{2k}) \simeq \mathbb Z ^{2g}$:
\[
1 \longrightarrow \mathcal I^{2k}_{g,1} \longrightarrow \Gamma W^{2k}_{2g,1} \longrightarrow \text {Aut} (Q_{W^{2k}_{g}} )\longrightarrow 1.
\]
The difference from the case where $2k=2$ is that the analog of the Torelli group $\mathcal I^{2k}_{2g,1}$  in higher dimensions is much simpler. According to Kreck \cite{Kreck}, it is an extension of 
two abelian groups, one of which is the finite group $\Theta _{2k+1}$ of exotic smooth structures on the $2k+1$ sphere  and,  depending on 
 $k$ (mod 8), the other is free abelian of size $\mathbb Z^{2g}$ or all 2-torsion:
\[
1 \longrightarrow \Theta _{2k+1} \longrightarrow 
\mathcal {I}^{2k}_{g,1} 
\longrightarrow \text {Hom} ( H_k (W_{g,1}^{2k}), S\pi_k (SO(k))) \longrightarrow 1.
\]
Here $S\pi_k(SO(k))$ is the image of $\pi_k (SO(k))$
under the map $SO(k) \to SO (k+1)$.
We refer to \cite {GRWmcg} for a recent study of these mapping class groups and more details including a table of the $S\pi_k(SO(k))$. In particular, it is proved in \cite {GRWmcg} that 
\[
\Gamma W^{12}_{g,1} \simeq \mathbb Z/3\mathbb Z \times
O_{g,g} (\mathbb Z).
\]
Thus in this case, the mapping class groups give rise to the infinite loop space 
\[
\mathbb Z \times (B\Gamma ^{12}_\infty )^+ \simeq B\mathbb Z/3\mathbb Z \times KO.
\]
In general, the two extensions from which $\Gamma W^{2k}_{g, 1}$ is built are not well understood. Thus, the infinite loop space structure on 
\[
\mathbb Z \times (B\Gamma W^{2k} _\infty )^+
\]
may not be surprising but seems also non-obvious and new in most cases.

%%%%%%%%%%%%%%%%%%%%%%%%%%%%%%%%%%%%%% EXAMPLES %%%%%%%%%%%%%%%%%%%%%%%%%%%%%%%
%\vskip .4in
\appendix
\section{Surface Operads}\label{sec:examples}

In this appendix we collect examples of operads with homological stability arising from the geometry of surfaces. 
The relevant homological stability results are  due to Harer
\cite {Harer}, Wahl \cite {wahl},  and Randal-Williams \cite{RWstability}. 

There are also homological stability results related to graphs and automorphisms of free groups. The operads related to these groups tend to  receive a map from an $E_\infty$-operad, and hence it is not surprising that their algebras group complete to infinite loop spaces. Therefore we have  not treated these examples in detail here.

\subsection{Surface operads}
In \cite{T}, a version of our main theorem was proved for a particular surface operad.  
In that set-up it was necessary to have a  strict multiplication, that is a map from $Ass$ into the surface operad. To construct this map some surfaces  had to be identified and diffeomorphisms were replaced by 
mapping class groups. The present set-up gives us more flexibility and the constructions can be significantly simplified.

\begin{eg} [Oriented surfaces and diffeomorphisms]\label{ex:oriented}
  We will now construct a variant of the surface operad studied in \cite {T}. Like in the constructions of \S\ref{highdim}, we have three atomic surfaces: a disk $D$, a torus $T=F_{1,2}$ and a pair of pants $P=F_{0,3}.$  (In \S\ref{highdim}, $T$ is denoted by $H$, and $P$ is denoted by $M$.)
  Let $F_{g,n+1}$ denote an oriented surface built out of atomic surfaces that has genus $g$ with $n+1$ (collared) boundary components, $n$
incoming and 1 outgoing. 

  We define a connected groupoid $\cS_{g,n+1}$ where the objects are surfaces of type $F_{g, n+1}$ 
  with a labeling of the incoming boundary components. 
  Define
  \[
   \cS_{g,n+1}((F,\sigma),(F',\sigma'))\coloneqq  \textrm{Diff}(F,F';\partial), 
  \]
  the diffeomorphisms that preserve the collars and the labels on the boundary components. 
  We have 
  a homotopy equivalence 
  \[
   B\cS_{g,n+1}\simeq B\Gamma_{g,n+1},
  \]
where $\Gamma_{g,n+1}$ is the mapping class group.

  We can now define an operad $\cS$ with 
  \[
   \cS(n)=\coprod_{g \geq 0} B\cS_{g,n+1}.
  \]
  The gluing of surfaces defines associative and equivariant operad structure maps.
  The operad is graded by the genus $g \in \mathbb N$. Consider the suboperad $\cS_0$ 
  corresponding to the genus zero surfaces.
  To identify an $A_\infty$-suboperad $\cA$ in $\cS_0$, connect the point corresponding to $1 \in S^1$ in the outgoing
  boundary circle by a simple path to those in each of the incoming boundary circles of the pair of pants $P$.  See \autoref{fig:trees}. 
  This way, after gluing,  every surface of genus zero will be decorated by a tree connecting the outgoing
  boundary circle to each of its incoming circles. Form $\cA \subset \cS_0$ by restricting the diffeomorphisms to those that 
  also map trees to trees. The complement of a tree is a disk, so these diffeomorphism groups will
  be contractible, and hence $\cA$ is an $A_\infty$-operad. 
  
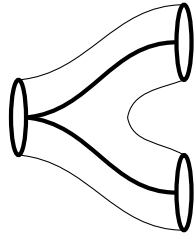
\begin{figure}
\begin{tikzpicture}
  \draw [ultra thick] (5.3,1) ellipse (.10 and .5);
  \draw [ultra thick] (7.5,0) ellipse (.10 and .5);
  \draw [ultra thick] (7.5,2) ellipse (.10 and .5);
  \draw (5.3,1.5) to [out=5,in=180]  (7.5,2.5);
  \draw (5.3,.5) to [out=-5,in=180]  (7.5,-.5);
  \draw (7.5,1.5) to [out=200,in=80] (6.75,1)  to [out=-80,in=150]  (7.5,.5);
  \draw [ultra thick](5.4, 1) to [out=-5,in=180]  (7.4,0);
  \draw [ultra thick] (5.4, 1) to [out=5,in=180] (7.4,2);
\end{tikzpicture}
\caption{Trees within surfaces}
\label{fig:trees}
\end{figure}

  By the main theorem of \cite {Madsen-Weiss} the associated infinite loop space is
  \[
   \g \cS(0) \simeq \mathbb Z\times B\Gamma_\infty ^+ \simeq \Omega ^\infty {\bf MT} SO(2),
  \]
and by \cite{Wahl04} the explicit infinite loop space structure on the right is the same as the one induced by the operad structure on $\g \cS(0)$.
\end{eg}

\begin{eg}[Nonorientable surfaces and diffeomorphisms]
  We now add another surface building block: a nonorientable surface of genus one $N=\mathbb{R}P^2 \setminus (D^2 \coprod D^2)$.
  Let $N_{k,n+1}$ be a surface of nonorientable 
  genus $k$ with one outgoing  and $n$ incoming boundary components.  As in \autoref{ex:oriented}
we construct $N_{k,n+1}$ from $D$, $P$, $S^1$ and  $N$,  and associate a groupoid $\cN_{k, n+1}$ where the morphisms are given by diffeomorphisms. 

  Thus we define an operad $\cN$ with
  \[
    \cN( n) \simeq \coprod_{k\geq 0} B \cN _{k, n+1}.
  \]
  The genus zero suboperad $\cN_0$ is the same as $\cS_0$, so we have a map
  $\mu \colon\cA \to \cN$ satisfying condition \ref{htpycom}.
  By  \cite[A]{wahl} we have 
  $H_*(\cN_{k, n+1}) \cong H_*(\cN_{k,1})$ for $k\geq 4\ast+3.$ 
 Hence, $\cN$ is an OHS.
 By \cite{wahl}, the associated infinite loop space is
  \[
    \g \cN (0) \simeq \mathbb Z \times B\cN_\infty^+ \simeq \Omega ^\infty {\bf MT} O(2),
  \]
  where $\cN_\infty = \lim _{k\to \infty } \pi_0 \Diff (N_{k, 1}, \partial)$ denotes the infinite mapping class group.
\end{eg}

\begin{eg}[Mixed surfaces and diffeomorphisms]
  We include this example as it has a grading different from $\mathbb N$.
  The procedure in \autoref{ex:oriented}, applied to the atomic surfaces $D$, $P$, $T$, $S^1$ and $N$, 
  constructs an operad we will denote by  $\cS \cN$.  Considering both orientable and nonorientable genus 
  defines a $\{+, - \} \times \mathbb N$ grading on $\cS\cN$ 
  with addition defined by $ (+, g) + (+, g') = (+, g+g')$, $ (-, k) + (-, k') = (-, k+k')$ and
  $(+, g) + (-, k) = (- , 2g +k)$. The group completion of this monoid is $\mathbb Z$ with generator $(-, 1)$.

  Take $N$ to be the propagator for $\cS\cN$ and observe that  
  $\gamma(T;N)=\gamma(N;\gamma(N;N))$.  In particular, under the action of this propagator,
orientable and nonorientable surfaces conflate and  
  \[
   \g \cS \cN(0) \simeq \mathbb{Z} \times B\cN^+_\infty.
  \] 

As a consequence, this operad gives a bridge between the operads of orientable surfaces and nonorientable surfaces. Comparing building blocks, there are operad maps 
  \[
   \cN \to \cS \cN \leftarrow \cS
  \]
  and the maps on connected components of group completions correspond to 
  \[
    \mathbb{Z} \xrightarrow{\id} \mathbb{Z} \xleftarrow{\cdot 2} \mathbb{Z}.
  \]
\end{eg}

\begin{eg}[Framed, $r$-spin, and pin surfaces]
In \cite{RWstability} Randal-Willimas studies the homological stability for diffeomorphism groups of surfaces with general $\theta$-structures. In particular, for $r$-spin structures  $\theta ^r$ on oriented surfaces, he shows that reducing boundary components by gluing in disks induces isomorphisms in degrees increasing with the genus of the underlying surfaces, see \cite[2.14]{RWstability}. Hence, these diffeomorphism groups (or their associated  mapping class groups) give rise to OHSs.
Similarly, he shows that homological stability holds for the $pin ^+$ and $pin^-$
mapping class groups of surfaces, see \cite[4.18, 4.19]
{RWstability}, and thus these too give rise to OHSs.  In all these cases care has to be taken when defining the $\theta$ structures on the atomic surfaces so that gluing is well defined and gives rise to an operad.
\end{eg}

\subsection{Applications}

\begin{eg}
[Map to \texorpdfstring{$K$}{K}-theory]
In \cite{T} Tillmann constructs an infinite loop space map into $K$-theory using the operad associated to orientable surfaces. There is also an interesting example coming from the unorientable case.
In particular, let ${N}_{k,1}$ be a nonorientable surface of genus $k$ with 1 boundary component. 
  The action of the mapping class group $\cN_{k,1}$ on $H_1({N}_{k,1})=\mathbb{F}_2^k$ induces a representation 
  \[
   \rho\colon \cN_{k,1} \to GL_k \mathbb{F}_2.
  \]
  Note that $\cN(0) = \coprod_{k \geq 0} B \cN_{k,1}.$
  Thus the map $\rho\colon  \cN(0) \to \coprod_{k \geq 0} BGL_k \mathbb{F}_2$ is a map of $\cN$-algebras,
  and hence the map on group completions is an $\Omega^\infty$-space map
  \[
    \rho\colon \mathbb{Z} \times B\cN_\infty^+ \simeq \Omega^\infty \mathbf{MT}O(2) \to \mathbb{Z}\times BGL(\mathbb{F}_2)^+ \simeq K(\mathbb{F}_2).
  \]
  Note that the Madsen-Tillmann spectrum $\Omega ^\infty \mathbf{MT}O(2)$ is geometrically defined as a Thom spectrum, 
  while $K(\mathbb{F}_2)$ is algebraically defined. It would be difficult to define a map of infinite loop spaces directly without 
  making use of the Madsen-Weiss Theorem which describes $\Omega ^\infty \mathbf{MT} O(2)$ in terms of classifying spaces of diffeomorphisms groups of surfaces.

\end{eg}

\begin{eg}[Detecting $\Omega^\infty$-spaces]
  Let $\tilde{\cN}_{k,1}$ be defined as the split extension of $\cN_{k,1}$ by its $\mathbb {F}_2$-homology $H_1(N_{k,1}, \mathbb {F}_2)=\mathbb{F}_2 ^k$:
  \[
   0 \longrightarrow \mathbb F_2 ^{k} \longrightarrow \tilde {\cN}_{k,1} \longrightarrow \cN_{k,1} \longrightarrow 0.
  \]
  Then there exists an $\cN$-algebra
  \[
    X \simeq \coprod _{g\geq 0} B \tilde{\cN}_{k,1}
  \] 
  and $\mathbb{Z} \times B \tilde{\cN}_\infty^+$ is an $\Omega^\infty$-space. 
  At this point we do not know another description of this infinite loop space.

  Similarly, as already noted in \cite {T}, in the oriented case we get an infinite loop space structure on the group completion of 
  \[
  Y= \coprod _{g\geq 0} B\tilde \Gamma _{g, 1}.
  \]
and Ebert and Randal-Williams show in \cite {Ebert-RW} 
  \[
\Omega B (Y) \simeq    \Omega \mathbf{MT} SO(2) \wedge B\mathbb C^\times _+.
  \]
\end{eg}

\bibliography{Operads}{}
\bibliographystyle{amsalpha}
\end{document}